\newcommand{\itemref}[1]{\hyperref[#1]{(\ref*{#1})}}
\tikzset{black node/.style={draw, circle, fill = black, minimum size = 7pt, inner sep = 0pt}}
\tikzset{diam/.style={draw, diamond, fill = black, minimum size = 7pt, inner sep = 0pt}}
\tikzset{white node/.style={draw, circle, fill = white, minimum size = 7pt, inner sep = 0pt}}
\tikzset{normal/.style={draw=none, rectangle, fill = none, minimum size = 0pt, inner sep = 0pt}}
\def\claimqed {{%        set up
\parfillskip=0pt        % so \par doesnt push \square to left
\widowpenalty=10000     % so we dont break the page before \square
\displaywidowpenalty=10000  % ditto
\finalhyphendemerits=0  % TeXbook exercise 14.32
%
%                 horizontal
\leavevmode             % \nobreak means lines not pages
\unskip                 % remove previous space or glue
\nobreak                % don't break lines
\hfil                   % ragged right if we spill over
\penalty50              % discouragement to do so
\hskip.2em              % ensure some space
\null                   % anchor following \hfill
\hfill                  % push \square to right
$\Diamond$%              % the end-of-proof mark

%                   vertical
\par
\medskip
}}                  % build pa
\newtheorem{theorem}{Theorem}[section]
\newtheorem{proposition}[theorem]{Proposition}
\newtheorem{lemma}[theorem]{Lemma}
\newtheorem{corollary}[theorem]{Corollary}
\newtheorem{observation}[theorem]{Observation}
\newtheorem{claim}[theorem]{Claim}
\newenvironment{proofclaim}[2]{\par\noindent\textit{Proof~#1.}\space#2}{{\claimqed}}%hfill $\Diamond$\vspace{1em}}}
\newcommand{\intv}[2]{\left \{ #1, \dots, #2\right \}}
\DeclareMathOperator{\tw}{{\bf tw}}
\DeclareMathOperator{\bw}{{\bf bw}}
\DeclareMathOperator{\polylog}{{polylog}}
\newcommand{\N}{\mathbb{N}}
\newcommand{\R}{\mathbb{R}}
\newcommand{\eaddress}[1]{\href{mailto:#1}{\texttt{#1}}}
\title{Minors in graphs of large $\theta_r$-girth\thanks{The second author has been partially supported by the
    Warsaw Centre of Mathematics and Computer Science and by the
    (Polish) National Science Centre grant PRELUDIUM
    2013/11/N/ST6/02706. The first and the last authors 
    were co-financed by the E.U. (European Social Fund - ESF) and Greek national funds through the Operational Program ``Education and Lifelong Learning'' of the National Strategic Reference Framework (NSRF) - Research Funding Program: ``Thales. Investing in knowledge society through the European Social Fund''.
    Email addresses:
    \eaddress{hatzisdimitris@gmail.com},
    \eaddress{jean-florent.raymond@mimuw.edu.pl},
    \eaddress{ignasi.sau@lirmm.fr}, and
    \eaddress{sedthilk@thilikos.info}.}}
\author{Dimitris Chatzidimitriou\thanks{Department of Mathematics, National and Kapodistrian  University of Athens, Athens, Greece.} \and Jean-Florent Raymond\thanks{AlGCo project team, CNRS, LIRMM, France.}~\thanks{University of Montpellier, Montpellier, France.}~\thanks{Faculty of Mathematics, Informatics and Mechanics, University of Warsaw, Warsaw, Poland.} \and  Ignasi Sau$^\ddag$ \and Dimitrios M. Thilikos$^\dag$$^\ddag$}
\date{\empty}
\begin{document}
\maketitle

\begin{abstract}\noindent For every $r \in \N$, let $\theta_r$ denote the graph with two vertices and $r$ parallel edges. The \emph{$\theta_r$-girth} of a graph $G$ is the minimum number of edges of a subgraph of $G$ that can be contracted to $\theta_r$. This notion generalizes the usual concept of girth which corresponds to the case~$r=2$.
In {[Minors in graphs of large girth, {\em Random Structures \& Algorithms}, 22(2):213--225, 2003]},  Kühn and  Osthus showed that graphs of sufficiently large minimum degree contain clique-minors whose order is an exponential function of their girth.
We extend this result for the case of $\theta_{r}$-girth 
and we show that the minimum degree can be replaced by some connectivity
measurement.
As an application of our results, we prove that, for every fixed $r$, graphs 
excluding as a minor the disjoint union of $k$ $\theta_{r}$'s have treewidth 
$O(k\cdot  \log k)$.
\smallskip
\end{abstract}

\noindent {\bf Keywords:} girth, clique minors, tree-partitions,
unavoidable minors, exclusion theorems.\\

\noindent \textit{2000 MSC:} 05C83.

\section{Introduction}

A classic result in graph theory asserts that if a graph has minimum degree $ck\sqrt{\log k}$, then 
it can be transformed to a complete graph on at least $k$ 
vertices by applying edge contractions and vertex deletions (i.e., it contains a  {\em ${k}$-clique minor}).
 This result has been proven by Kostochka in~\cite{kosto84}
and  Thomason in~\cite{1983MPCPS} and a precise estimation of the 
constant  $c$ has been given by Thomason in~\cite{Thomason2001318}. For recent 
results related to conditions that force a clique minor see~\cite{Markstrom04,JoretW13,DujmovicHJRW13,FountoulakisKO09,KuhnO04b}.

The {\em girth} of a graph $G$ is the minimum length of a cycle in~$G$.
Interestingly, it follows 
that  graphs of large minimum degree contain clique-minors
whose order is an {\em exponential} function of their  girth.
In particular, it follows by the main result of Kühn and Osthus in~\cite{RSA:RSA10076}
 that  there is a  constant $c$ such that,
if a graph has minimum degree $d\geq 3$ and girth $z$, then 
it contains as a minor a clique of size $k$, where
\[
k\geq \frac{d^{cz}}{\sqrt{z\cdot \log d}}.
\]

In this paper we provide conditions, alternative to the above one, that can force the existence of a clique-minor whose size  is exponential.
\paragraph{$H$-girth.} We say that a graph $H$ is a {\em minor} of a graph $G$, if $H$ can be obtained from $G$ by using the operations of vertex removal, edge removal, and edge contraction. An \emph{$H$-model} in $G$ is
a subgraph of $G$ that contains $H$ as a minor. Given two graphs  $G$ and $H$, we define 
the $H$-{\em girth} of $G$ as the minimum number of edges of an $H$-model in $G$. 
If $G$ does not contain $H$ as a minor, we will say that its $H$-girth is equal to infinity.
For every $r \in \N$, let $\theta_r$ denote the graph with two vertices and $r$ parallel edges, e.g. in \cref{fig:t5} the graph $\theta_5$ with 5 parallel edges.
Clearly, the girth of a graph is its $\theta_{2}$-girth and,  for every $r_{1}\leq r_{2}$,  
the $\theta_{r_{1}}$-girth of a graph is at most its $\theta_{r_{2}}$-girth.
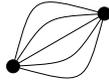
\begin{figure}[ht]
\label{pumkin}
 \centering
 
\scalebox{.7}{\begin{tikzpicture}[rotate=30, every node/.style = black node]
  \node (x) at (0,0) {};
  \node (y) at (2,0) {};
  \draw (x) .. controls (1, 1) and (1, 1) .. (y);
  \draw (x) .. controls (1, -1) and (1, -1) .. (y);
  \draw (x) .. controls (1, .5) and (1, .5) .. (y);
  \draw (x) .. controls (1, -.5) and (1, -.5) .. (y);
  \draw (x) -- (y);
 \end{tikzpicture}}
 \caption{The graph $\theta_5.$}
 \label{fig:t5}
\end{figure}

Our first result is the following extension of the  result of Kühn and Osthus in~\cite{RSA:RSA10076} for the case of $\theta_{r}$-girth.

\begin{theorem}\label{t:klikforce}
There is a constant $c$ such that, for every $r\geq 2$, $d\geq 3r$, and $z\geq 2r$,
if a graph has minimum degree $d$ and $\theta_{r}$-girth at least 
$z$, then it contains as a minor a clique of size $k$,
where  
\[
k\geq \frac{(\frac{d}{r})^{c \frac{z}{r}}}{\sqrt{\frac{z}{r}\cdot \log d}}.
\]
\end{theorem}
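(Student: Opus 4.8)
The plan is to reduce the statement to the Kostochka--Thomason theorem quoted in the introduction (minimum degree $\Omega(k\sqrt{\log k})$ forces a $K_k$-minor). Concretely, I would show that a graph $G$ with minimum degree at least $d\ge 3r$ and $\theta_r$-girth at least $z\ge 2r$ admits a minor $G^{\ast}$ with
\[
\delta(G^{\ast})\ \ge\ M\ :=\ (d/r)^{\Omega(z/r)}.
\]
Granting this, Kostochka--Thomason applied to $G^{\ast}$ yields a clique minor of $G^{\ast}$ --- hence of $G$, by transitivity of the minor relation --- of order $\Omega\!\big(M/\sqrt{\log M}\big)$, and since $\log M=\Theta\big((z/r)\log(d/r)\big)\le (z/r)\log d$ this is at least $(d/r)^{cz/r}/\sqrt{(z/r)\log d}$ once the implied constants are absorbed into $c$. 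This follows Kühn and Osthus' strategy for the girth case, which likewise builds a minor of exponentially large minimum degree and then invokes Kostochka--Thomason; so rather than using their theorem as a black box I would re-run their method with $\theta_r$-girth in place of girth.

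The engine is a pair of local consequences of $\theta_r\text{-girth}(G)\ge z$, each proved by exhibiting a cheap $\theta_r$-model. (a) Fix a vertex $c$ and run a BFS from it. If a vertex $u$ at distance $j$ from $c$ has $r$ or more neighbours at distance at most $j$, then taking shortest paths from these $r$ neighbours to $c$ gives a connected set $B$ with $|B|\le rj+O(r)$ having $r$ edges to $u$, and a spanning tree of $G[B]$ together with $\{u\}$ and these $r$ edges is a $\theta_r$-model with at most $rj+O(r)$ edges; hence, as long as $j\lesssim z/r$, every vertex of layer $j$ has at most $r-1$ "non-forward" neighbours and therefore at least $d-r+1$ neighbours in layer $j+1$. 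Since each layer-$(j+1)$ vertex absorbs at most $r-1$ of those edges (its neighbours in layer $j$ are non-forward), consecutive layers grow by a factor $\beta:=(d-r+1)/(r-1)$, and one checks $\beta\ge\max\{2,\sqrt{d/r}\}$ when $d\ge 3r$; thus balls of radius $\rho=\Theta(z/r)$ around any vertex contain at least $\beta^{\rho}=(d/r)^{\Omega(z/r)}$ vertices. (b) No two vertex-disjoint connected sets $A,B$ with $|A|+|B|<z-r+2$ can have $r$ or more edges between them, since spanning trees of $G[A]$ and $G[B]$ plus $r$ crossing edges would form a $\theta_r$-model with fewer than $z$ edges; more usefully, if $A$ and $B$ have radius at most $s$ with $2rs+r<z$, then joining the endpoints of $r$ crossing edges to the respective centres by shortest paths already yields a $\theta_r$-model of size at most $2rs+r<z$, a contradiction, and the same argument shows a single vertex of $A$ sends at most $r-1$ edges to $B$; hence two such sets are joined by at most $(r-1)^2$ edges.

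For the construction I would fix $s=\lfloor z/(2r)\rfloor-O(1)$, let $N$ be a maximal set of vertices pairwise at distance greater than $s$, and take $\{P_c\}_{c\in N}$ to be the Voronoi partition of $V(G)$: each $P_c$ is connected, has radius at most $s$, and contains the ball of radius $\lfloor s/2\rfloor$ around $c$. Let $G^{\ast}$ be the contraction of $G$ obtained by collapsing each $P_c$ to a vertex. By (a) the ball of radius $\lfloor s/2\rfloor$ inside $P_c$ has at least $\beta^{\lfloor s/2\rfloor}$ vertices, and pushing the layer-growth one step beyond the portion of the cell one currently occupies shows that at least $\beta^{\Omega(s)}$ edges leave $P_c$; by (b), with cell radius $s$ chosen so that $2rs+r<z$, each other cell absorbs at most $(r-1)^2$ of them. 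Therefore $P_c$ is adjacent in $G^{\ast}$ to at least $\beta^{\Omega(s)}/(r-1)^2=(d/r)^{\Omega(z/r)}$ cells, i.e.\ $\delta(G^{\ast})\ge M$, as required.

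The main obstacle is precisely the step "the edges leaving $P_c$ reach many distinct cells", that is, item (b) applied to cells: this is where $\theta_r$-girth must do the work that girth does for Kühn and Osthus. For $r=2$, two cells joined by two edges already bound the ordinary girth, but for $r\ge 3$ one must thread $r$ almost-parallel shortest paths through a pair of cells to build the forbidden $\theta_r$-model, and keeping those paths cheap forces the cell radius down to $\sim z/(2r)$ --- which is exactly why the exponent is $z/r$ rather than $z$. Two further points need care: the branch sets (the cells) must stay small relative to the budget $z$ throughout, so that the lifted configurations genuinely contradict $\theta_r\text{-girth}(G)\ge z$ rather than merely $\theta_3\text{-girth}(G)\ge z$ (which can fail badly for graphs of large $\theta_r$-girth); and one should isolate the parameter range in which the claimed bound is non-trivial --- when $z\lesssim r/c$ the exponent $cz/r$ is at most $1$ and Kostochka--Thomason applied directly to $G$ (of minimum degree $d$) already gives a clique minor of the required order, so only the regime $z=\Omega(r)$ needs the construction above.
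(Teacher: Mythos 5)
Your overall route is the one the paper takes: centers forming a maximal scattered set, connected cells of radius $\Theta(z/r)$ around them (the paper's $d$-grouped partition, \cref{k5hfyeu}), cheap $\theta_r$-models to cap the number of edges between two cells (your item (b) is exactly \cref{spoof}, and it already gives the bound $r-1$ rather than $(r-1)^2$) and to force local expansion, then contraction of the cells followed by Kostochka--Thomason (\cref{kosto}). Your engine (a), the closing computation, and the remark about the trivial regime are all fine. Note, however, that the obstacle you single out as the main one (``edges leaving $P_c$ reach many distinct cells'') is the easy part; the real work lies elsewhere.

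The genuine gap is the step you dispatch with ``pushing the layer-growth one step beyond the portion of the cell one currently occupies shows that at least $\beta^{\Omega(s)}$ edges leave $P_c$''. The growth guaranteed by (a) is growth of the spheres $S_j(c)$ in $G$, not of $S_j(c)\cap P_c$: beyond radius $\lfloor s/2\rfloor$ the spheres may expand almost entirely inside neighbouring cells while the in-cell layers $L_j=S_j(c)\cap P_c$ shrink, possibly down to a single vertex at the last radius $j_0\le s$ where they are nonempty; ``one step beyond'' then certifies only about $d-r+1$ leaving edges, since arbitrarily many vertices of $S_{s+1}(c)$ may funnel through one and the same edge of the cut around $P_c$. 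To close this you need a per-layer dichotomy on the $L_j$: no edge can leave the cell before depth $\lfloor s/2\rfloor$ (the ball of that radius lies in $P_c$), and if at every layer $j\le s$ fewer than an $\epsilon$-fraction of the at least $(d-r+1)|L_j|$ forward edges left the cell, then $|L_{j+1}|\ge(1-\epsilon)\beta|L_j|$ would force $L_{s+1}\neq\emptyset$, contradicting the radius bound; hence some layer $j\ge\lfloor s/2\rfloor$ emits at least $\epsilon(d-r+1)\bigl((1-\epsilon)\beta\bigr)^{\lfloor s/2\rfloor}$ edges. Even this needs care with constants in the borderline case $d=3r$, where $\beta=(d-r+1)/(r-1)$ is barely above $2$ and $\epsilon$ must be chosen small so that $(1-\epsilon)\beta$ still exceeds a fixed power of $d/r$. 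The paper instead performs this count via the distance-decomposition of the cell: every bag has fewer than $r$ vertices (else a model with at most $2rd$ edges, \cref{corfkior}), so the decomposition tree branches with factor about $\delta/(r-1)$ down to depth $d$, and each of its exponentially many leaf bags is forced to send at least $\delta-2r+3$ edges out of the cell (proof of \cref{sl0tr}); either repair is acceptable, but as written your argument does not establish the exponential lower bound on the edges leaving a cell.
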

In the formula above, a lower bound on the minimum degree as a
function of $r$ is necessary. An easy computation shows that when
applying \cref{t:klikforce} for $r=2$, we can get the
aforementioned formula of Kühn and Osthus, where the constant in the
exponent is one fourth of
the constant of \cref{t:klikforce}.

Our second finding
is that this degree condition can be replaced by some ``loose connectivity'' requirement. 

\paragraph{Loose connectivity.}
For two integers $\alpha, \beta \in \N$, a graph $G$ is called {\em $(\alpha,\beta)$-{loosely connected}} if for every $A,B \subseteq V(G)$ such that $V(G) = A \cup B$ and $G$ has no edge between $A\setminus B$ and $B \setminus A$, we have that $|A \cap B| < \beta \Rightarrow \min(|A\setminus B|, |B\setminus A|) \leq  \alpha$. Intuitively, this means that a \emph{small} separator (i.e., on less than $\beta$ vertices) cannot ``split'' the graph into two \emph{large} parts (that is, with more than $\alpha$ vertices each). 

Our second result indicates that the requirement on the minimum degree in~\cref{t:klikforce}  can be replaced by the loose connectivity 
 condition  as follows.

\begin{theorem}\label{t:klikforce2}
There is a  constant $c>0$ such that, for every $r\geq 2$, $\alpha\geq 1$, and
$z\geq 168\cdot\alpha\cdot r\log r$, it holds 
that if a connected graph with at least $z$ edges
%has more than $(\alpha + 1)\cdot (2r-1)$ vertices,
is $(\alpha,2r - 1)$-loosely connected and 
has $\theta_{r}$-girth at least 
$z$, then it contains as a minor a clique of size $k$, where  
\begin{align*}
k\geq \frac{2^{c\cdot\frac{z}{r\alpha}}}{\sqrt{rz}}.
\end{align*}
\end{theorem}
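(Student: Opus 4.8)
The natural plan is to reduce Theorem \ref{t:klikforce2} to Theorem \ref{t:klikforce}. That is, starting from a connected, $(\alpha,2r-1)$-loosely connected graph $G$ with at least $z$ edges and $\theta_r$-girth at least $z$, I would like to find inside $G$ a (topological) minor $G'$ that still has $\theta_r$-girth large — roughly $z/\alpha$ — and additionally has minimum degree at least $3r$, so that Theorem \ref{t:klikforce} applies to $G'$ and produces a clique minor of the advertised order. The two properties we need to engineer simultaneously are a degree lower bound and the preservation of $\theta_r$-girth, and the loose connectivity hypothesis is exactly the tool that lets a bounded-degree ``core'' survive.

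Concretely, the key steps would be as follows. First, suppress all vertices of degree $2$ (replace a maximal degree-$2$ path by a single edge); this does not change the $\theta_r$-girth by more than a constant factor relevant to the model structure and cannot create or destroy a $\theta_r$-minor, though it does turn the graph into a multigraph. Second, iteratively delete vertices of degree $1$ and, crucially, identify parts of the graph that are ``hanging'' off a small ($<2r-1$) separator. Here is where loose connectivity enters: if some small separator $S$ with $|S|<2r-1$ splits $G$ into parts $A\setminus B$ and $B\setminus A$, then by the $(\alpha,2r-1)$-loosely-connected hypothesis one of these parts has at most $\alpha$ vertices; we can therefore prune it, losing at most $\alpha$ edges (more carefully, at most $O(\alpha\cdot r)$ edges once we account for the $<2r-1$ boundary edges as well), while not decreasing the $\theta_r$-girth of what remains provided we are careful that the pruned side did not contribute to a minimal $\theta_r$-model. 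Iterating this pruning, and interleaving it with degree-$1$ and degree-$2$ suppressions, should terminate with a minor $G'$ of $G$ in which every vertex has degree at least $3$ and, moreover, every small separator is ``balanced'' — which, combined with a bound on the number of branch vertices, forces the minimum degree up to $3r$, or alternatively directly yields a large-girth-type structure. Third, I would track how much $\theta_r$-girth is lost: each pruning step removes $O(\alpha r)$ edges and there are at most $O(|E(G)|/(\alpha r))$-many such steps in a careful amortized count, but in fact the cleaner bound is that the resulting $G'$ has $\theta_r$-girth at least $z/(c'\alpha)$ for an absolute constant $c'$ — this is where the hypothesis $z\geq 168\,\alpha r\log r$ is calibrated, ensuring $z/(c'\alpha)\geq 2r$ so that Theorem \ref{t:klikforce} is applicable with its constraint $z\geq 2r$. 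Finally, apply Theorem \ref{t:klikforce} to $G'$ with its degree $\geq 3r$ and $\theta_r$-girth $\geq z/(c'\alpha)$, obtaining a clique minor of size at least
\[
\frac{(d/r)^{c\,z/(r\cdot c'\alpha)}}{\sqrt{\tfrac{z}{r\alpha}\log d}}
\;\geq\;
\frac{2^{c''\,z/(r\alpha)}}{\sqrt{rz}},
\]
after absorbing $d\geq 3r$ into the constant of the exponent and crudely bounding the denominator by $\sqrt{rz}$; this gives the claimed bound with a suitable constant $c$.

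The main obstacle, I expect, is the second step: showing that one can prune the ``light'' side of every small separator \emph{without decreasing the $\theta_r$-girth} and with a controlled, amortized edge loss. The subtlety is that a minimum $\theta_r$-model in $G$ might genuinely pass through a small separator into the light side, so one cannot prune blindly; one has to argue that either such a model can be rerouted to avoid the light side (using that the light side is attached through fewer than $2r-1$ vertices, which is below the ``threshold'' $2r$ needed to route $r$ internally disjoint paths of a $\theta_r$), or else the light side together with the separator already \emph{is} essentially a small $\theta_r$-model, contradicting $\theta_r$-girth $\geq z$ since the light side has only $\leq\alpha$ vertices and $z$ is much larger than $\alpha r$. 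Making this dichotomy precise — and setting up the right potential function so that the total edge loss over all pruning steps is only a multiplicative $O(\alpha)$ factor rather than something worse — is the technical heart of the argument. The constants ($168$, and the interplay with the $\log r$ term) come out of optimizing this amortization together with the $d\geq 3r$ versus $d=3r$ substitution in Theorem \ref{t:klikforce}.
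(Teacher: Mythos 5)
Your reduction hinges on the second and third steps: that by suppressing low-degree vertices and pruning the ``light'' side of every separator of size less than $2r-1$ you arrive at a minor $G'$ with minimum degree at least $3r$ and $\theta_r$-girth about $z/\alpha$, so that \cref{t:klikforce} applies. That degree-boosting step fails, and no amount of care about rerouting models repairs it: $(\alpha,2r-1)$-loose connectivity says nothing about vertices whose degree lies between $3$ and $3r-1$. For such a vertex $v$, the separation $A=\{v\}\cup N(v)$, $B=V(G)\setminus\{v\}$ has small side $A\setminus B=\{v\}$, so the loose-connectivity condition is satisfied vacuously; nothing is forced, and deleting $v$ only lowers its neighbours' degrees. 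Concretely, a $3$-regular expander of large girth (such as the Ramanujan graphs used in \cref{sec:last}) is $(\alpha,2r-1)$-loosely connected for modest $\alpha$, has large $\theta_r$-girth, has no degree-$1$ or degree-$2$ vertices to suppress, and every separator with fewer than $2r-1$ vertices cuts off only $O(\alpha)$ vertices whose removal does not raise any degree. Your cleaning process therefore terminates essentially on the graph itself, whose minimum degree is $3$, far below $3r$, and \cref{t:klikforce} is not applicable. This is not an accident: the entire point of \cref{t:klikforce2} is to cover precisely those graphs for which the minimum-degree hypothesis is unavailable, so a reduction that re-manufactures minimum degree $3r$ inside the same graph cannot be the right mechanism.

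The paper does not route through \cref{t:klikforce} at all. It proves a separate intermediate result, \cref{rk6ter}: in $O_r(m)$ steps one finds either a $\theta_r$-model with at most $z$ edges, or a connected $(2r-2)$-edge-protrusion of extension more than $w$, or an $H$-model with $\delta(H)\geq \frac{1}{r-1}2^{\frac{z-5r}{4r(2w+1)}}$, where the exponentially large minimum degree is obtained with \emph{no} degree assumption on $G$, by contracting the parts of a $d$-grouped partition and counting ports of the associated distance-decompositions. Loose connectivity enters only to exclude the middle outcome (with $w=3\alpha$): if a $(2r-2)$-edge-protrusion had extension more than $3\alpha$, a balanced vertex of the tree of its tree-partition would produce a separation $(A,B)$ with $|A\cap B|\leq 2r-2$ and both $|A\setminus B|,|B\setminus A|>\alpha$, contradicting $(\alpha,2r-1)$-loose connectivity. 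Kostochka's theorem (\cref{kosto}) is then applied directly to the high-minimum-degree minor $H$ to obtain the clique. To salvage your outline you would need to replace the degree-boosting step by a structural substitute for minimum degree of this kind (absence of large edge-protrusions, or an equivalent), rather than trying to make \cref{t:klikforce} applicable to a pruned minor.
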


Both \cref{t:klikforce} and~\cref{t:klikforce2} are derived from two more general results, namely \cref{sl0tr} and \cref{rk6ter}, respectively. \cref{sl0tr} asserts that
graphs with large $\theta_{r}$-girth and sufficiently large minimum degree contain as a minor a graph whose 
minimum degree is exponential in the girth.
\cref{rk6ter} replaces the minimum degree condition with the absence 
of sufficiently large ``edge-protrusions'', that are roughly tree-like structured subgraphs with small boundary 
to the rest of the graph (see \cref{sec:def} for the detailed definitions).

\paragraph{Treewidth.}
A \emph{tree-decomposition} of a graph $G$ is a pair $(T,{\cal X})$
where $T$ is a tree and ${\cal X}$ is a family of subsets of $V(G)$, called \emph{bags},
 indexed by the vertices of $T$ and such that:
 \begin{enumerate}[(i)]
 \item for each edge $e=(x,y)\in E(G)$ there is a vertex $t\in V(T)$
   such that $\{x,y\} \subseteq X_t$;
 \item for each vertex $u \in V(G)$ the subgraph of $T$ induced
   by $\{ t\in V(T) \mid u\in X_t\}$ is connected; and
 \item $\bigcup_{t \in V(T)} X_t =V(G)$. 
 \end{enumerate}

The \emph{width} of a tree-decomposition $(T,{\cal X})$ is the maximum
size of its bags minus~one. The \emph{treewidth}
of a graph $G$, denoted $\tw(G)$, is defined as the minimum width over
all tree-decompositions of $G$. 

Treewidth has been introduced in the Graph Minors Series 
of Robertson and Seymour~\cite{RobertsonS86GMV}
and is an important parameter in both combinatorics 
and algorithms.  In~\cite{RobertsonS86GMV}, Robertson and Seymour
proved that for every planar graph $H$, there exists a constant $c_{H}$
such that every graph excluding $H$ as a minor has treewidth at most $c_{H}$.
This result has several applications in algorithms and a lot of research has been devoted 
to optimizing the constant $c_{H}$ in general or for specific instanciations of $H$ (see~\cite{RST94,DiestelJGT99}).
In this direction, Chekuri and Chuzhoy proved in~\cite{ChekuriC13poly,Chuzhoy15grid}  
that $c_{H}$ is bounded by  a polynomial  in the size 
of $H$. Specific results for particular $H$'s such that $c_{H}$ is a low polynomial 
function have been derived in~\cite{Bodl93a,BodlaenderLTT97,Birmele06,RaymondT13lowp}.

Given a graph $J$, we denote by $k\cdot J$ the disjoint union of $k$ copies of $J$.
A consequence of the general results of Chekuri and Chuzhoy in~\cite{ChekuriC13larg} is that for every planar graph~$J$, it holds that~$c_{k\cdot J}= k\cdot \polylog k$, where $\polylog k$ denotes some polynomial in $\log k$. Prior to this,  
a quadratic (in $k$) upper bound was derived for the case where $J=\theta_{r}$~\cite{FominLMPS13, Birmele06}.
As an application of our results, we prove that for every 
fixed $r$,  $c_{k\cdot \theta_{r}}=O(k\cdot \log k)$ (\cref{s:new}). We also argue that this 
bound is {\sl tight} in the sense that it cannot be improved to $o(k\cdot \log k)$. Our proof 
is based on \cref{rk6ter} and the results of Geelen, Gerards, Robertson, and Whittle
on the excluded minors for the matroids of branch-width $k$~\cite{GeelenGR03onth}.

%
%Let $\theta_{r,k}$ (for some $r,k\in \N_{\geq 1}$) denote the disjoint union of $k$ copies of~$\theta_{r}$.
%As an application of our results, we prove the following

%that if $H$ is the  the disjoint union of $k$ copies of $\theta_{r}$, for some fixed $r$, then  $c_{H}=O(k\cdot \log k)$.
%

%The following is an application of the results above.

%A straightforward corollary is that for every $r\in\N_{\geq 2}$ and 
%$k\in\N_{\geq 1}$, the treewidth of every graph not containing
%$\theta_{r}^{k}$ is less than $2^{6r}\cdot k\cdot  \log (k+1)$. To the
%knowledge of the authors, this is the first upper bound on the
%treewidth of $\theta_{r}^{k}$-minor free graphs where the dependency
%in terms of $k$ is of order of magnitude $O(k \log k)$, thus improving the
%results in~\cite{FominST11stre, Birmele06} where a quadratic bound has been proved. 

\paragraph{Organization of the paper.}
The main notions used in this paper are defined in \cref{sec:def}. Then, we show in \cref{sec:s1} that the proofs of \cref{t:klikforce} and \cref{t:klikforce2} can be derived from \cref{sl0tr} and \cref{rk6ter}, which are proved in~\cref{u84p1gm6}. Finally, in \cref{sec:last}, we prove 
our tight bound on the minor-exclusion of $k\cdot \theta_{r}$.

\section{Definitions}
\label{sec:def}

Given a function $\phi: A\rightarrow B$ and a set $C\subseteq A$, we
define $\phi(C)=\{\phi(x)\mid x\in C\}$. Let $\chi,\psi: \N
\rightarrow \N$.
We say that $\chi(n)=O_{r}(\psi(n))$ if there exists a function $\phi:\N
\rightarrow \N$ such that, for every $r\in \N$, $\chi(n)=O(
\phi(r)\cdot \psi(n))$. This notation indicates that the contribution
of $r$ is hidden in the constant of the big-O notation.
If $\mathcal{X}$ is a set of sets, we denote by $\bigcup \mathcal{X}$
the union $\bigcup_{X \in \mathcal{X}} X$.
Unless otherwise specified, logarithms are binary.

\paragraph{Graphs.} All graphs in this paper are finite, undirected, loopless,
and may have multiple edges. For this reason, a graph is represented
by a pair $G=(V,E)$ where $V$ is its vertex set, denoted by $V(G)$ and
$E$ is its edge multi-set, denoted by $E(G)$.
In this paper, when giving
the running time of an algorithm involving some graph $G$, we agree
that  $n=|V(G)|$ and $m=|E(G)|$.
Given a vertex $v$ of a graph $G$, the set of vertices of $G$ that are
adjacent to $v$ is denoted by $N_G(v)$ and the {\em degree} of $v$ in $G$ is $|N_G(v)|$. Observe that since multiple edges are allowed, the degree of a vertex may differ from the number of incident edges.
For every subset $S \subseteq
V(G)$, we set $N_G(S) = \bigcup_{v\in S}N_G(v) \setminus S$ (all
vertices of $V(G) \setminus S$ that have a neighbor in $S$). The
minimum degree over all vertices of a graph $G$ is denoted by~$\delta(G)$.
 For a given graph $G$ and two vertices $u, v \in V(G)$, ${\bf
dist}_{G} (u, v)$ denotes {\em the distance between $u$ and $v$},
which is the number of edges on a shortest path between $u$ and $v$,
and ${\bf diam}(G)$ denotes $\max \{ {\bf dist}_{G}(u,v)\mid {u, v\in
V(G)} \}$.  For a set $S \subseteq V(G)$ and a vertex $w \in V$, ${\bf
dist}_{G}(S,w)$ denotes $\min\{{\bf dist}_{G}(v,w)\mid {v\in S} \}$.
Also, for a given vertex $u\in V(G)$, ${\bf ecc}_{G}(u)$ denotes the
{\em eccentricity} of the vertex $u$, that is, $\max \{ {\bf dist}_{G}(u,v)
\mid {v\in V(G)} \}$.

\paragraph{Rooted trees.}
A {\em rooted tree} is a pair $(T,s)$ such that $T$ is a tree and $s$, which we call
the \emph{root}, belongs to $V(T)$. Given a vertex $x\in V(T)$, the
{\em descendants} of $x$ in $(T,s)$ are the elements of ${\bf des}_{(T,s)}(x)$, which is defined as the set containing each vertex
$w$ such that the unique path from $w$ to $s$ in $T$ contains $x$.  Given a
rooted tree $(T,s)$ and a vertex $x\in V(G)$, the {\em height} of $x$
in $(T,s)$ is the maximum distance between $x$ and a vertex in ${\bf
des}_{(T,s)}(x)$.  The {\em height} of $(T,s)$ is the height of $s$ in
$(T,s)$.  The {\em children} of a vertex $x\in V(T)$ are the vertices
in ${\bf des}_{(T,s)}(x)$ that are adjacent to $x$.  A {\em leaf} of
$(T,s)$ is a vertex of $T$ without children. Notice that, according to this definition, $s$ is not a leaf unless $|V(T)|=1$.
The {\em parent} of a
vertex $x\in V(T)\setminus \{s\}$, denoted by ${\bf p}(x)$, is the
unique vertex of $T$ that has $x$ as a child.

\paragraph{Partitions and protrusions.} A {\em rooted
  tree-partition} of a graph $G$ is a triple ${\cal D}=({\cal X},T,s)$
where
$(T,s)$ is a rooted tree and ${\cal X}=\{X_t\}_{t\in V(T)}$ is a
partition of $V(G)$ where either $|V(T)|=1$ or for every $\{x,y\}\in
E(G)$, there exists an edge $\{t,t'\}\in E(T)$ such that
$\{x,y\}\subseteq X_{t}\cup X_{t'}$ (see also
\cite{Seese,Halin1991203,Ding199645}). The elements of ${\cal X}$ are
called~\emph{bags}. In other words,
the endpoints of every edge of $G$ either belong to the same bag, or they
belong to bags of adjacent vertices of $T$.  Given an edge $f=\{t,t'\}\in
E(T)$, we define $E_{f}$ as the set of edges with one endpoint in
$X_{t}$ and the other in $X_{t'}$. 
%Notice that all edges in $E_{f}$ are non-loop edges. 
The {\em width} of ${\cal D}$ is defined as
$\max\{|X_{t}|\}_{t\in V(T)}\cup\{|E_{f}|\}_{f\in E(T)}$.

In order to decompose graphs along edge cuts, we introduce the
following edge-counterpart of the notion of
\emph{(vertex-)protrusion} used
in~\cite{BodlaenderFLPST09,BodlaenderFLPST09arxiv} (among others).
A subset $Y\subseteq V(G)$ is a
{\em $t$-edge-protrusion} of $G$ with {\em extension} $w$ (for some
positive integer $w$) if the graph $G[Y\cup N_{G}(Y)]$ has a rooted
tree-partition ${\cal D}=({\cal X},T,s)$ of width at most $t$ and such
that $N_{G}(Y)=X_{s}$ and $|V(T)|\geq w$. The protrusion $Y$ is
said to be \emph{connected} whenever $Y\cup N_{G}(Y)$ induces a
connected subgraph in~$G$.

\paragraph{Distance-decompositions.}
  A \emph{distance-decomposition} of a connected graph
  $G$ is a rooted tree-partition
  ${\cal D}=({\cal X}, T, s)$ of $G$, where the following additional
  requirements are~met (see also~\cite{Distwidth}):
 \begin{enumerate}[(i)]
 %\item  ${\cal X} = \{ X_i \; | \; i\in V(T)\}$ a partition of
 %  $V(G)$. We call the sets of this partition {\em bags} of ${\cal
 %    D}$.
 %\item $(T, \:i_s)$ is a rooted tree where $X_{i_s} = \{s\}$,
 \item $X_s$ contains only one vertex, we shall call it  $u$, referred to as the \emph{origin} of $\mathcal{D}$;
 \item for every $t\in V(T)$ and every $x\in X_{t}$,  ${\bf dist}_{G}
   (x, u)={\bf dist}_{T} (t,s)$;\label{it:dist}
 \item for every $t\in V(T)$, the graph $G_{t}=G\left [\bigcup_{t'\in{\bf
       des}_{(T,s)}(t)}X_{t'} \right ]$ is connected; and \label{it:conn}
 \item if $C$ is the set of children of a vertex $t\in V(T)$, then the
   graphs $\left \{G_{t'} \right\}_{t' \in C}$ are the connected
   components of~$G_t\setminus X_{t}$.
 \end{enumerate}
An example of distance-decomposition is given in~\cref{fig:dd}. For every vertex $u$ of a graph on $m$ edges, a distance-decomposition
$({\cal X}, T, s)$ with origin $u$ can be constructed in
$O(m)$ steps by~breadth-first search.

\begin{figure}[ht]
  \centering
\scalebox{.9}{\begin{tikzpicture}[every node/.style = black node,scale=1]
    \begin{scope}[rotate = 90]
      \draw (2,0) node[label=-90:$u_5$] (u) {} -- (1,1) node[label=180:$u_6$] {} -- (0,0) node[label=-90:$u_8$] {} -- (1,-1) node[label=0:$u_7$] {} -- (u) -- (4,0) node[label=0:$u_3$] (v) {} -- (3,1) node[label=180:$u_4$] {} -- (u) (5,-1) node[label=90:$u_1$] {} -- (v) -- (5,1) node[label=90:$u_0$] {} -- (4,1) node[label=180:$u_2$] {} -- (v);
    \end{scope}
    \begin{scope}[xshift = 5cm, yshift = 4cm, scale = 1.5, every node/.style = {draw=none, color = black, fill = white, rectangle, rounded corners}]
      \draw (0,0) node (n2) {$\{u_5\}$}
      (-1,-1) node (n01) {$\{u_6, u_7\}$}
      (1,-1) node (n34) {$\{u_3, u_4\}$}
      (-1,-2) node (n-1) {$\{u_8\}$}
      (0.5,-2) node (n5) {$\{u_0, u_2\}$}
      (1.5,-2) node (n6) {$\{u_1\}$};
      \draw (n-1) -- (n01) -- (n2) -- (n34) -- (n5) (n34) -- (n6);
    \end{scope}
  \end{tikzpicture}
  }
  \caption{A graph (left) and a distance-decomposition with origin $u_5$ of it (right).}
  \label{fig:dd}
\end{figure}
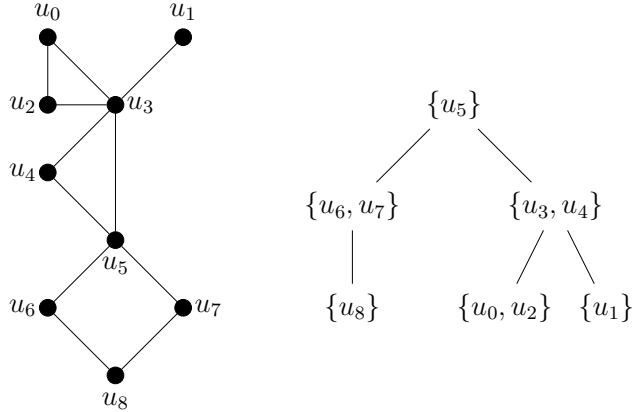

For every $t\in V(T)\setminus\{s\}$, we define $E^{(t)}$ as the set of edges that have
one endpoint in $X_{t}$ and the other in $X_{{\bf p}(t)}$. 

Let $P$ be a path in $G$ and ${\cal D}=({\cal X}, T, s)$ a distance-decomposition of $P$.
We say that $P$ is a {\em
  straight path} if the heights, in $(T,s),$ of the indices of the
bags in ${\cal D}$ that contain vertices of $P$  are pairwise
distinct. Obviously, in that case, the sequence of the heights of the
bags that contain each subsequent vertex of the path is strictly
monotone.

\paragraph{Grouped partitions.}\label{par:group}
 Let $G$ be a connected graph and let $d\in \N$. A {\em $d$-grouped
   partition} of $G$ is a partition ${\cal R}=\{ R_1, \ldots,
 R_l\}$ of $V(G)$ (for some positive integer $l$) such that for
 each $i\in \left \{1, \dots, l \right \}$, the graph $G[R_{i}]$ is connected  
 and  there is a vertex $s_i \in R_i$ with the following properties:
 \begin{enumerate}[(i)]
 \item ${\bf ecc}_{G[R_i]}(s_i) \leq2d$ and\label{it:gp1}
 \item for each edge $e=\{x,y\}\in E(G)$ where $x\in R_i$ and $y\in R_j$
   for some distinct integers $i,j \in \left\{ 1, \dots, l\right \}$,
   it holds that ${\bf dist}_{G}(x, s_i)\geq d$ and ${\bf dist}_{G}(y,
   s_j)\geq d$. \label{it:gp2}
 \end{enumerate}
A set $S=\{s_{1},\ldots,s_{l}\}$ as above is  a  {\em set of centers}
of  ${\cal R}$ where $s_{i}$ is the {\em center} of $R_{i}$ for $i\in
\left \{1, \dots, l \right \}$.

Given a graph $G$, we define a {\em $d$-scattered set}  $W$ of $G$ as follows:
\begin{itemize}
\item $W\subseteq V(G)$ and
\item $\forall u,v \in W,\, {\bf dist}_{G}(u,v)>d$.
\end{itemize}
If $W$ is inclusion-maximal, it will be called a {\em maximal $d$-scattered
  set} of $G$.

\paragraph{Frontiers and ports.}\label{par:ports}
Let $G$ be a graph, let ${\cal R}=\{ R_1, \ldots, R_l\}$ be a $d$-grouped
partition of $G$, and let $S=\{s_{1},\ldots,s_{l}\}$ be a set
of centers of ${\cal R}$.
For every $i\in \intv{1}{l}$, we denote by  ${\cal D}_i=({\cal X}_i, T_{i},
s_i)$ the unique distance-decomposition with origin $s_i$ of the graph
$G[R_i]$ where ${\cal X}_{i}=\{X_{t}^{i}\}_{t\in V(T_{i})}$. For
every ${i}\in\intv{1}{l}$ and every $h\in \intv{0}{{\bf
    ecc}_{T_{i}}(s_i)}$,  we denote by $I_{i}^{h}$ the
vertices of $(T_{i}, s_i)$ that are at distance $h$ from
$s_i$, and we set $I_{i}^{< h}=\bigcup_{h' = 0}^{h-1}I_{i}^{h'}$ and
$I_{i}^{\geq  h}=\bigcup_{h'= h}^{{\bf
    ecc}_{T_{i}}(s_i)}I_{i}^{h'}$.
We also set 
\[
 V_{i}^{h}=\bigcup_{t\in I_{i}^{h}}X_{t}^{i},\qquad
 V_{i}^{<h}=\bigcup_{t\in I_{i}^{<h}}X_{t}^{i},\ \text{and}\qquad 
 V_{i}^{\geq h}=\bigcup_{t\in I_{i}^{\geq h}}X_{t}^{i}.
\]

The {\em vertex-frontier} $F_i$ of $R_{i}$ is the set of vertices
in $V_{i}^{d-1}$ that are connected in $G$ to a vertex $x \in V(G)\setminus
R_i$ via a path, the internal vertices of which belong to $V_{i}^{ 
\geq d}$. The {\em node-frontier} of $T_{i}$ is 
\begin{align}
N_{i} &= \{t\in V(T_{i}) \mid F_{i}\cap X_{t}\neq\emptyset\}\label{kfhdn48f}.
\end{align}
A vertex $t\in I_{i}^{\geq d-1}$ is called a {\em port} of $T_{i}$ if $X^i_{t}$
contains some vertex that is adjacent in $G$ to a vertex of $V(G)\setminus R_{i}$.

\section{\texorpdfstring{Finding small $\theta_r$-models}{Finding small models of a pumpkin}}
\label{sec:s1}

\subsection{Two intermediate results}
\label{finterm}

The main results of this section are the following.

\begin{theorem}
\label{sl0tr}
There exists an algorithm that, with input three integers $r,\delta,z$,
where $r\geq 2$, $\delta \geq 3r$, and $z\geq r$ and 
an $m$-edge graph $G$, outputs one the following:
\begin{itemize}
\item a $\theta_{r}$-model in $G$ with at most $z$ edges,
\item a vertex $v$ of $G$ of degree less than $\delta$, or 
\item an $H$-model in $G$ for some graph $H$ where $\delta(H)\geq \frac{\delta-2r+3}{r-1}\cdot {\lfloor \frac{\delta}{r-1}-1 \rfloor }^{\frac{z-r}{4r}},$
\end{itemize}
in $O_{r}(m)$ steps.
\end{theorem}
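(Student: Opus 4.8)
First scan $V(G)$: if some vertex has degree $<\delta$, output it. So assume $\delta(G)\ge\delta$. Put $d:=\big\lfloor\frac{z-r}{4r}\big\rfloor$; if $d=0$ (i.e.\ $z<3r$) simply output $G$ itself as the third alternative, since $\delta(G)\ge\delta\ge\frac{\delta-2r+3}{r-1}$. Otherwise $d\ge1$ and the plan is to either expose a $\theta_r$-model with at most $z$ edges, or to output $H:=G/\mathcal R$, the graph obtained by contracting each block of a suitable partition $\mathcal R=\{R_1,\dots,R_l\}$ of $V(G)$. We build $\mathcal R$ from a single multi-source breadth-first search: compute a maximal $2d$-scattered set $S=\{s_1,\dots,s_l\}$ and place $v$ in $R_i$ with the source whose bfs-chain discovered it. One checks in the usual way that each $G[R_i]$ is connected, that $\mathbf{dist}_{G[R_i]}(v,s_i)=\mathbf{dist}_G(v,s_i)$, that $\mathbf{ecc}_{G[R_i]}(s_i)\le 2d$ by maximality of $S$, and that every edge joining $R_i$ to $R_j$ ($i\ne j$) has both ends at distance $\ge d$ from the respective centres (because $\mathbf{dist}_G(s_i,s_j)>2d$ and the two end-distances differ by at most one). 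Hence $\mathcal R$ is a $d$-grouped partition with centres $S$, and from the same search we read off, for each $i$, the distance-decomposition $\mathcal D_i$ of $G[R_i]$ with origin $s_i$ and its bfs tree $T_i$; as in \cref{sec:def} we write $V_i^{\,h}$ for the vertices of $R_i$ at distance $h$ from $s_i$.

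\textbf{Two local obstructions.} Call a vertex $x$ of $T_i$ at distance $h\le 2d$ from $s_i$ \emph{bad} if it has at least $r$ neighbours in $G[R_i]$ that are not children of $x$ in $T_i$, and call a pair $\{i,j\}$ \emph{bad} if $G$ has at least $r$ edges between $R_i$ and $R_j$. If a vertex $x$ is bad, pick $r$ such non-child neighbours $y_1,\dots,y_r$; the tree-path $P_t$ from $y_t$ to $s_i$ avoids $x$ (if $y_t$ is at level $\le h$ and is not the parent of $x$, the path uses only levels $\le h$ and its unique level-$h$ vertex is $y_t\ne x$; if $y_t$ is at level $h+1$ it is not a child of $x$, so its level-$h$ vertex is its parent, again $\ne x$; the parent of $x$ has a path staying strictly below level $h$). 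Then $B_1=\bigcup_t P_t$ is connected and disjoint from $x$, and $xy_1,\dots,xy_r$ are $r$ edges from $B_1$ to $\{x\}$: a $\theta_r$-model with at most $r(2d+1)+r\le z$ edges (using $z\ge 3r$), which we output. If a pair $\{i,j\}$ is bad, join the $R_i$-ends of $r$ such edges by the union of their tree-paths to $s_i$ (a connected subgraph of $G[R_i]$ with at most $2dr$ edges), do likewise inside $R_j$, and add the $r$ edges: a $\theta_r$-model with at most $4dr+r\le z$ edges, which we output.

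\textbf{When neither obstruction occurs.} Fix $i$. If $x\in R_i$ is at distance $h<d$ from $s_i$, the second condition in the definition of a $d$-grouped partition forbids $x$ any neighbour outside $R_i$, so $x$ has $\ge\delta$ neighbours in $G[R_i]$; as $x$ is not bad, at most $r-1$ of them are non-children, so $x$ has $\ge\delta-r+1$ children, all at level $h+1$. Children of distinct vertices being distinct, $|V_i^{\,h+1}|\ge(\delta-r+1)\,|V_i^{\,h}|$ for $0\le h<d$ (with $|V_i^{\,1}|\ge\delta$, as every neighbour of $s_i$ is a child), so $|V_i^{\,d}|\ge\delta\,(\delta-r+1)^{d-1}$. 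Moreover every leaf of $T_i$ lies at level $\ge d$ (a leaf at smaller level would have $\ge\delta-r+1\ge1$ children) and, not being bad, has at most $r-1$ neighbours in $G[R_i]$, hence at least $\delta-(r-1)$ neighbours \emph{outside} $R_i$; since every level-$d$ vertex has a leaf-descendant and distinct level-$d$ vertices have disjoint sets of leaf-descendants, $T_i$ has at least $|V_i^{\,d}|$ leaves, so $G$ has at least $(\delta-r+1)\,|V_i^{\,d}|\ge\delta\,(\delta-r+1)^{d}$ edges leaving $R_i$. In particular this is positive, so $R_i$ meets another block and $l\ge 2$ (this also settles the small-diameter case $l=1$: there some vertex or pair would have been bad). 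Finally, no pair being bad means at most $r-1$ edges leave $R_i$ towards any single other block, so in $H=G/\mathcal R$ the vertex of $R_i$ has degree at least $\tfrac1{r-1}\,\delta\,(\delta-r+1)^{d}$. As this holds for all $i$, and since $\delta\,(\delta-r+1)^{d}\ge(\delta-2r+3)\big\lfloor\tfrac{\delta}{r-1}-1\big\rfloor^{\frac{z-r}{4r}}$ (a short computation using $\delta\ge 3r$, $d=\lfloor\tfrac{z-r}{4r}\rfloor$ and $\delta-r+1\ge\big\lfloor\tfrac{\delta}{r-1}-1\big\rfloor$), we get $\delta(H)\ge\tfrac{\delta-2r+3}{r-1}\big\lfloor\tfrac{\delta}{r-1}-1\big\rfloor^{\frac{z-r}{4r}}$ and output $H$.

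\textbf{Running time and the main difficulty.} Steps above are breadth-first searches, local degree counts, and the extraction of a bounded number of shortest paths, so the whole procedure runs in $O(m)$ time up to a factor depending only on $r$, i.e.\ in $O_r(m)$ steps. The real work is purely quantitative: the bad-vertex analysis only certifies a branching factor of $\delta-r+1$, and one must verify that, together with the precise choice of $d$ and the factor gained from the escaping leaves, this already meets (and for $r\ge3$ comfortably beats) the stated estimate with its smaller base $\lfloor\frac{\delta}{r-1}-1\rfloor$ — for $r=2$ the two bases coincide and the margin is essentially tight, which is exactly what makes \cref{t:klikforce} recover the Kühn--Osthus bound. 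Correctly balancing the radius $2d$ of the blocks against $z$ so that both extracted $\theta_r$-models have at most $z$ edges, while still leaving room for $d$ to be $\Theta(z/r)$, is the delicate bookkeeping point.
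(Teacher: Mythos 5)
Your overall strategy is the paper's: build a $d$-grouped partition from a maximal $2d$-scattered set by multi-source BFS, extract a small $\theta_r$-model either from a dense local configuration or from $r$ edges between two blocks (the paper's \cref{spoof}), and otherwise certify exponentially many edges leaving every block and contract the blocks. Your ``bad vertex'' argument on the BFS tree is a correct and slightly stronger substitute for the paper's route through the distance-decomposition (\cref{corfkior}): it yields branching factor $\delta-r+1$ instead of $\lfloor\frac{\delta}{r-1}-1\rfloor$, and the $\theta_r$-models you build from a bad vertex or a bad pair are indeed within the size budget once $d\geq 1$.

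The genuine gap is exactly the step you defer to ``a short computation''. Having set $d=\lfloor\frac{z-r}{4r}\rfloor$, your counting only delivers minimum degree $\frac{1}{r-1}\,\delta(\delta-r+1)^{d}$ in the contracted graph, i.e.\ an exponent equal to the \emph{floor}, while the theorem demands exponent $\frac{z-r}{4r}$. The bridging inequality you assert, $\delta(\delta-r+1)^{d}\geq(\delta-2r+3)\lfloor\frac{\delta}{r-1}-1\rfloor^{\frac{z-r}{4r}}$, is false in general: for $r=2$ it reads $\delta(\delta-1)^{d}\geq(\delta-1)^{d+1+\varepsilon}$ with $\varepsilon$ the fractional part of $\frac{z-2}{8}$, which already fails for $\delta\geq 6$ and $\varepsilon$ close to $1$ (e.g.\ $z=17$, $d=1$); for $r\geq 3$ it fails whenever $\delta$ is large compared with $(r-1)^{d+1}$. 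The same defect sinks your $d=0$ fallback: for $r<z<5r$ the exponent $\frac{z-r}{4r}$ is a positive fraction, so outputting $G$ with $\delta(G)\geq\delta$ need not meet the bound (again $r=2$, $z=6$ requires roughly $(\delta-1)^{3/2}$). The repair is not to round $d$ down at all: keep $d=\frac{z-r}{4r}$ as a real threshold (distances are integers, so ``${\bf dist}<d$'' means ${\bf dist}\leq\lceil d\rceil-1$), so that branching occurs on the first $\lceil d\rceil\geq\frac{z-r}{4r}$ levels while the extracted models still have at most $2rd+r$, respectively $4rd+r=z$, edges; with that change your own counts do give the stated bound, and this is what the paper implicitly does by never flooring $d$.
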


\begin{theorem}
\label{rk6ter}
There exists an algorithm that, with input three positive integers $r \geq 2, w, z$ and a connected $m$-edge graph $G$, where $m \geq z> r\geq 2$, outputs one of the following:
\begin{itemize} 
\item a $\theta_{r}$-model in $G$ with at most $z$ edges, 
\item a connected $(2r-2)$-edge-protrusion $Y$ of $G$ with extension
  more than $w$, or
\item an $H$-model in $G$ for some graph $H$ where   $\delta(H)\geq \frac{1}{r-1}2^{\frac{z-5r}{4r(2w+1)}}$,
\end{itemize}
in $O_{r}(m)$ steps.
\end{theorem}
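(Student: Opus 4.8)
The plan is to mimic the strategy behind \cref{sl0tr}, but replacing the minimum-degree hypothesis by the absence of a large connected $(2r-2)$-edge-protrusion. The idea is as follows. First, run a breadth-first search from an arbitrary vertex to build a distance-decomposition $\mathcal D=(\mathcal X,T,s)$ of $G$; this costs $O(m)$ time. The point of the $\theta_r$-girth lower bound is that, locally, the graph cannot be contracted onto $\theta_r$ using few edges, which — combined with the structure of the BFS layers — forces the layers to have a tree-like (more precisely, ``almost a forest'') structure whenever they are thin. Concretely, I would try to show: either some ball of small radius around a BFS layer contains a short $\theta_r$-model (first output), or else each such ball, after contracting the part closer to the root, behaves like a graph of small $\theta_r$-girth slack, hence has bounded ``excess'' and can be peeled off as a $(2r-2)$-edge-protrusion. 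The crucial gadget is that a connected subgraph with more than $r-1$ edge-disjoint paths between two vertices, or with enough independent cycles, yields a $\theta_r$-minor on few edges; so the absence of short $\theta_r$-models bounds the cyclomatic-type complexity of the pieces we cut along edge-cuts of size $\le 2r-2$.

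The second, and main, step is the amplification that turns ``large $\theta_r$-girth'' into ``a minor of exponentially large minimum degree''. Here I would iterate a layer-contraction argument along the distance-decomposition: group the BFS layers into blocks of width $\Theta(r(2w+1))$; within each block, either we find a short $\theta_r$-model, or we can extract a connected $(2r-2)$-edge-protrusion of extension exceeding $w$ from the ``tree-like residue'' of that block (second output), or the block contributes a branching by a factor that is at least a fixed constant bigger than $1$ at each level of depth $\Theta(r(2w+1))$. Summing the branching over the $\sim z$ edges available along a straight path gives, after contracting each block to a single vertex, a minor $H$ whose minimum degree is at least roughly $2^{(z-5r)/(4r(2w+1))}/(r-1)$, the factor $1/(r-1)$ coming from the fact that a $\theta_r$-model needs $r$ parallel connections, so one can ``afford'' to keep only a $1/(r-1)$ fraction of the neighbours of each contracted blob without creating a small $\theta_r$. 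This is precisely the step where the three-way case distinction is essential: whenever the branching factor fails to be bounded below, that failure is witnessed either by a short $\theta_r$-model or by a long edge-protrusion, so one of the first two outputs triggers.

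The implementation details I would handle with the machinery already set up in \cref{sec:def}: distance-decompositions to organize the BFS layers; the notions of vertex-frontier, node-frontier, and port to keep track of which vertices of a block talk to the rest of the graph across a cut of size $\le 2r-2$ (so that what we peel off is genuinely a protrusion with the required extension); straight paths to carry the inductive branching count down through the levels; and $d$-grouped partitions / $d$-scattered sets to batch the layers into blocks of the right thickness $\Theta(r\log r)$-free here but $\Theta(r(2w+1))$. The running time $O_r(m)$ follows because every step — BFS, identifying cuts of bounded size $2r-2$, contracting blocks, and checking for a local $\theta_r$-minor on $O(r)$ edges — is linear in $m$ with constants depending only on $r$ (and $w$, which enters the block size but each block is processed in time linear in its size).

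The hard part will be the bookkeeping in the amplification step: making sure that when we fail to branch at a given level, the certificate we extract is genuinely a \emph{connected} $(2r-2)$-edge-protrusion of extension $>w$ (i.e.\ that the rooted tree-partition witnessing it has $>w$ nodes and boundary $X_s = N_G(Y)$ of the right type), rather than merely a sparse subgraph; and, symmetrically, that when a short $\theta_r$-model is forced, it really uses at most $z$ edges and not merely $O_r(z)$. Getting the constants $5r$ and $4r(2w+1)$ to come out exactly, as opposed to up to constant factors, will require carefully choosing the block thickness and tracking the ``$-r$'' losses (one per each of the $r$ parallel strands one must avoid) at each of the $\Theta(z/(r(2w+1)))$ levels.
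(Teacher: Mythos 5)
Your toolkit is the right one (distance-decompositions, ports and frontiers, ``failure to branch yields a protrusion'', and the $1/(r-1)$ factor coming from the fact that $r$ parallel edges between two contracted blobs would give a short $\theta_r$-model), but the backbone of your amplification step does not work and is not what the paper does. You run a single BFS from one root, cut the resulting distance-decomposition into blocks of consecutive layers of thickness $\Theta(r(2w+1))$, and contract each block to a vertex. In a distance-decomposition all edges of $G$ join vertices in the same bag or in bags that are adjacent in the tree, so contracting connected blocks of one such decomposition produces a minor whose adjacency structure is a quotient of that tree: it is tree-like, its leaf blocks have degree $1$, and no choice of block thickness can make its minimum degree exponential (or even larger than the branching at a single node). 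The missing idea -- which you mention only in passing as bookkeeping machinery -- is that the clusters to be contracted must come from a $d$-grouped partition built around a \emph{maximal $2d$-scattered set} of centers with $d=(z-r)/(4r)$ (\cref{k5hfyeu}), so that the whole graph is tiled by many clusters of radius at most $2d$, and the large degree of each contracted cluster is obtained by counting edges leaving that cluster towards \emph{distinct other clusters}.

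Concretely, the paper's proof of \cref{rk6ter} runs a distance-decomposition inside each cluster $G[R_i]$ and makes three checks: at least $r$ edges between two clusters give a $\theta_r$-model with at most $4rd+r=z$ edges (\cref{spoof}); a level cut $E^{(t)}$ of size at least $r$ inside a cluster gives a $\theta_r$-model with at most $2r\cdot(\text{height})\leq z$ edges (\cref{blurp}), and this bound $|E^{(t)}|\leq r-1$ is exactly what makes the boundary of the object extracted later have at most $2r-2$ edges; and an $N_i$-unimportant path of length at least $2(w+1)$ in a cluster tree (with $N_i$ the node-frontier) is folded onto itself to produce a connected $(2r-2)$-edge-protrusion of extension more than $w$ (\cref{l:pong}). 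If none of these fire, \cref{wt5lokp} (via \cref{ping}) shows each cluster tree has at least $2^{(d-1)/(2w+1)}=2^{(z-5r)/(4r(2w+1))}$ ports, i.e.\ that many edges leaving the cluster; since any two clusters share at most $r-1$ edges, each cluster meets at least $\frac{1}{r-1}2^{(z-5r)/(4r(2w+1))}$ other clusters, and contracting every cluster gives the desired minor $H$. Note also that the exponent comes from cluster radius $d\approx z/(4r)$ divided by the maximum unimportant-path length $2w+1$, not from a constant branching factor per block of thickness $\Theta(r(2w+1))$; your proposal conflates these two parameters, and it never explains what forces branching at all in the absence of a minimum-degree hypothesis -- in the paper branching is measured relative to the node-frontier, which is nonempty precisely because the scattered centers force every cluster to have neighbors outside itself.
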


The results of Chandran
and Subramanian in~\cite{Chandran200523} imply  that if $G$ has girth at least $z$ and 
minimum degree at least $\delta$, then $\tw(G)\geq \delta^{c\cdot z}$, for some constant $c$. 
As in the third condition of \cref{sl0tr} it holds that $\tw(G)\geq\tw(H)\geq \delta(H)$, \cref{sl0tr}
can also be seen as a qualitative 
extension of the results of~\cite{Chandran200523}. 
 
The above two results will be used to prove \cref{t:klikforce} and
\cref{t:klikforce2}. We will also need the following result of~Kostochka\cite{kosto84}.

\begin{proposition}[\!\! \cite{kosto84}, see also \cite{1983MPCPS,Thomason2001318}]\label{kosto}
There exists a  constant $\xi\in \R$ such that for every $d\in
\N$, every graph of average degree at least $d$ contains a clique of
order $k$ as a minor, for some integer $k$ satisfying
\[
k \geq \xi\cdot\frac{d}{\sqrt{\log d}}.
\]
\end{proposition}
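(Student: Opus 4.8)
\medskip
\noindent\textbf{Proof proposal.} The plan is to prove the contrapositive form of the statement, namely that a graph with no $K_k$-minor has average degree less than $C\cdot k\sqrt{\log k}$ for some absolute constant $C$; the proposition then follows by taking $\xi$ to be a suitable small constant. So let $G$ have average degree at least $d$. The first, routine step is to pass to a subgraph $H\subseteq G$ with $\delta(H)\ge d/2$: iteratively delete any vertex whose current degree is $<d/2$, and observe that this operation never lowers the ratio $|E|/|V|$ below $d/2$, hence cannot empty the graph. It now suffices to exhibit in $H$ a $K_k$-minor with $k=\Omega(\delta/\sqrt{\log\delta})$, where $\delta:=\delta(H)\ge d/2$; write $N:=|V(H)|$ and note $N\ge\delta+1$, so the answer we want does not depend on how large $N$ is.

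Next I would build a model of $K_k$ in $H$ greedily. The object maintained is a family $B_1,\dots,B_i$ of pairwise disjoint, connected, and pairwise-adjacent vertex subsets of $H$, together with the set $U$ of vertices lying in none of them. To enlarge the family, pick a vertex $v\in U$ and grow, by a breadth-first exploration confined to $U$, a connected set $B_{i+1}\ni v$ until it sends an edge to each of $B_1,\dots,B_i$. The accounting is the crux: since the $B_j$'s are disjoint they occupy at most $N$ vertices in total, so if each branch set can be kept to size about $s$ then $k\cdot s=O(N)=O(\delta)$. Growing the $B_j$'s as \emph{arbitrary} explored sets, each one needs size about $\log k$ to ``dominate'' the $\le k$ earlier branch sets (this is essentially the Duchet--Meyniel estimate), which yields only the weaker bound $k=\Omega(\delta/\log\delta)$.

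The improvement to the $\sqrt{\log}$ bound — which is the whole content of the Kostochka--Thomason theorem — comes from taking the branch sets to be \emph{structured}, e.g. short paths (or small trees) rather than arbitrary explored sets. Then connectivity of each branch set is automatic, and, under a suitable randomisation, two branch sets on $m$ vertices each fail to be joined by an edge only with probability of order $2^{-\Omega(m^2)}$ rather than $2^{-\Omega(m)}$, because there are $\Theta(m^2)$ potential edges between them. A union bound over the $\binom{k}{2}$ adjacency requirements now closes already when $m=\Theta(\sqrt{\log k})=\Theta(\sqrt{\log\delta})$, and disjointness forces $k\cdot m=O(\delta)$, that is $k=\Omega(\delta/\sqrt{\log\delta})$, as desired. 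Concretely, the extremal instances are (disjoint unions of) near-$\tfrac12$-dense graphs on $\Theta(\delta)$ vertices, where one extracts a near-spanning path and cuts it into $k$ consecutive intervals of length $\Theta(\sqrt{\log\delta})$, and then checks that every pair of intervals spans an edge.

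The main obstacle is to make the previous paragraph rigorous for \emph{every} graph of minimum degree $\delta$ rather than for the idealised pseudo-random model: one must handle graphs containing no long path, graphs of highly unbalanced structure, and the dependence between the structure used to build the branch sets and the edges used to link them. This is precisely the delicate probabilistic and extremal analysis carried out by Kostochka~\cite{kosto84} and Thomason~\cite{1983MPCPS} (and sharpened to the optimal constant in~\cite{Thomason2001318}), which is why we invoke their result here rather than reproving it.
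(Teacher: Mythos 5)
The paper does not prove this proposition at all: it is quoted as a known theorem of Kostochka \cite{kosto84} and Thomason \cite{1983MPCPS,Thomason2001318}, and your proposal ultimately does exactly the same, deferring the decisive probabilistic/extremal analysis to those references, so the two treatments coincide. Your preliminary sketch (passing to a subgraph of minimum degree $d/2$ and outlining why structured branch sets of size $\Theta(\sqrt{\log\delta})$ should suffice) is a fair heuristic, but it is not a proof on its own — in particular the claim that two branch sets of size $m$ miss each other with probability $2^{-\Omega(m^2)}$ presupposes a density/pseudo-randomness between the relevant vertex sets that an arbitrary graph of minimum degree $\delta$ need not have, and bridging that is exactly the content of the cited works, as you yourself acknowledge.
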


\subsection{The proofs of the main theorems}
\label{sec:potmt}
We are now ready to prove \cref{t:klikforce} and \cref{t:klikforce2} using the intermediate results described in the previous section.

\begin{proof}[Proof of \cref{t:klikforce}.]
Observe that since $G$ has no $\theta_{r}$-model with at most $z$ edges
and $G$ has minimum degree
$d \geq 3r$, a call to the algorithm of \cref{sl0tr} on $(r,d,z,G)$ should
return an $H$-model of $G$, for some graph $H$ where
$\delta(H)\geq \frac{d-2r+3}{r-1}\cdot {\lfloor \frac{d}{r-1}-1
  \rfloor }^{\frac{z-r}{4r}}=:d'$.
Using the fact that $z-r\geq z/2$, 
  it is not hard to check that there is a constant $c' \in \R$ such that
\[
\xi\cdot\frac{d'}{\sqrt{\log d'}} \geq \frac{(\frac{d}{r})^{c'\cdot \frac{z}{r}}}{\sqrt{\frac{z}{r}\cdot \log d}}.
\]
Hence by \cref{kosto}, $G$ has a clique of the desired order as a~minor.
\end{proof}

\begin{proof}[Proof of~\cref{t:klikforce2}]
  As in the proof of \cref{t:klikforce}, the properties that
  $G$ enjoys will force a minor of large minimum degree. Let us call
  the algorithm of \cref{rk6ter} on $(r, 3\alpha, z, 
  G)$. We assumed that $G$ has no $\theta_r$-model with $z$
  edges or less, hence the output of the algorithm cannot be such a
  model.
Let us now assume that the algorithm outputs a
$(2r-2)$-edge-protrusion $Y$ with extension more than $3\alpha$, and let
$(\mathcal{X}, T,s)$ be a rooted tree-partition of $G[Y\cup N_{G}(Y)]$ of width at
most $2r-2$ such that $N_G(Y) = X_s$ and $n(T)> 3\alpha$.
It is known that every tree of order $n$ has a vertex, the removal of
which partitions the tree into components of size at most $n/2$ each.
Hence, there is a vertex $v \in V(T)$ and a partition $(Z,Z')$ of
$V(T) \setminus \{v\}$ such that:
\begin{itemize}
\item both $Z \cup \{v\}$ and $Z' \cup
\{v\}$ induce connected subtrees of $T$;
\item $\frac{1}{3}n(T) \leq |Z|, |Z'| \leq \frac{2}{3}n(T)$; and
\item $s \in Z$ or $v = s$.
\end{itemize}
Let $A = \bigcup_{t\in Z'}X_{t} \cup \{X_v\}$ and $B = V(G)
\setminus \bigcup_{t\in Z'}X_{t}$. Notice that $V(G) = A \cup B$ and that no edge of $G$ lies
between $A\setminus B$ and $B\setminus A$. As $A \cap B = X_v$, we have $|A \cap B| < 2r-1$.
Last, $Z' \subseteq A\setminus B$ and $Z \subseteq B\setminus A$ give that $|A\setminus B|,|B\setminus A| > \alpha$.
The existence of $A$ and $B$ contradicts the fact that $G$ is $(\alpha,
2r-1)$-loosely connected. Thus $G$ has no $(2r-2)$-edge-protrusion $Y$
of extension more than~$3\alpha$.

A consequence of this observation is that the only possible output of
the algorithm mentioned above is an $H$-model of $G$ for some
graph $H$, where
\begin{align*}
\delta(H)\geq
\frac{1}{r-1}\cdot2^{\frac{z-5r}{4r(6\alpha+1)}}\geq \frac{1}{r}\cdot 2^{\frac{z}{168\cdot r\alpha}}=:d.  
\end{align*}

Notice also that $\log d=\frac{z}{168\cdot r\alpha}-\log r$ which, by the condition of the 
theorem, is a non-negative number. Moreover, $\log d \leq z/r$.
Therefore, there is a constant $c'' \in \R$ such that
\[
\xi\cdot\frac{d}{\sqrt{\log d}} \geq \frac{ 2^{c''\cdot\frac{z}{ r\alpha}}}{\sqrt{z\cdot r}}
\]
in order to conclude the proof.
\end{proof}

\section{The proofs of \texorpdfstring{\cref{sl0tr} and \cref{rk6ter}}{the intermediate theorems}}
\label{u84p1gm6}

\subsection{Preliminary results}
Before proving \cref{rk6ter} and \cref{sl0tr} (in
\cref{sec:proofth5} and~\cref{sec:proofth4}, respectively) we
need some preliminary results. Let us start with some definitions.

Let $(T,s)$ be a rooted tree and let $N$ be a subset of its vertices, such that for every two vertices in $N$ no one is a descendant of the other.
We say that a vertex $u$ of $T$ is {\em $N$-critical} if either it
belongs to $N$ or there are at least two vertices
in $N$ that are descendants of two distinct children of $u$.  An  {\em $N$-unimportant} path in $T$ is a path with at least $2$ vertices,
with exactly two $N$-critical vertices, which 
are its endpoints (see \cref{fig:unim} for a picture).  Notice that an
$N$-unimportant path in $T$ cannot have an internal vertex that
belongs to some other $N$-unimportant path. Also, among the two
endpoints of an $N$-unimportant path there is always one which is a
descendant of the other. As we see in the proof of the following lemma, $N$-unimportant paths are the
maximal paths with internal vertices of degree 2 that appear if we repeatedly delete
leaves that do not belong to~$N$.

\begin{figure}[ht]
  \centering
  \begin{tikzpicture}[every node/.style = black node]
    \draw (0,0) node[label=90:root] (top) {} (2,-1) node (u) {} (-2,-1) node[normal] (left) {} (4,-2) node[normal] (right) {};
    \draw (right) -- ++(-60:1) -- +(180:1) -- (right) -- (u) -- (top) -- (left) -- ++(-60:1) -- +(180:1) -- (left);
    \draw[dashed] (u) -- ++(-2,-1) node[solid] (mid1) {} -- ++(-2,-1) node[solid] (mid2) {}
    -- ++(-2,-1) node[solid] (v) {};
    \draw (v) -- ++(-60:1) -- +(180:1) -- (v);
    \draw (mid1) -- ++(2,-1) node[normal] (mid1l) {};
    \draw (mid2) -- ++(2,-1) node[normal] (mid2l) {};
    \draw (mid2) -- ++(0,-1) node[normal] (mid2d) {};
    -- (mid2d);
    \draw[fill = gray!50] (mid1l) -- ++(-60:1) -- +(180:1) -- (mid1l);
    \draw[fill = gray!50] (mid2l) -- ++(-60:1) -- +(180:1) -- (mid2l);
    \draw[fill = gray!50] (mid2d) -- ++(-60:1) -- +(180:1) -- (mid2d);
  \end{tikzpicture}
  \caption{An unimportant path (dashed) in a tree. Gray subtrees are those
    without vertices from~$N$.}
  \label{fig:unim}
\end{figure}
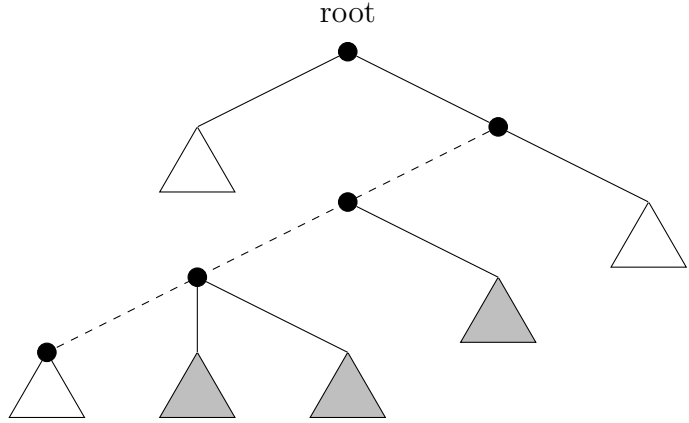

%% rooted trees

\begin{lemma}
  \label{wt5lokp}
  Let $d,k \in \N$, $k \geq 1$. 
  Let $(T,s)$ be a rooted tree and let $N$ be a set of vertices of 
  $(T,s)$,  each of which is at distance at least $d$ from $s$.
  If for some integer $k$, every $N$-unimportant path in $T$ has length at most $k$, then $|N|\geq 2^{d/k}$.
\end{lemma}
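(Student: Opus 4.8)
The plan is to prune the tree $T$ from the leaves inward and track how the size of $N$ controls the depth that survives. Starting from $(T,s)$, I would repeatedly delete every leaf that does not belong to $N$; call $T'$ the tree that remains when no such deletion is possible. Every leaf of $T'$ lies in $N$, and the internal vertices of $T'$ that have degree $2$ are exactly the internal vertices of the $N$-unimportant paths of the original tree (this is the intuition flagged in the paragraph before the lemma). Contracting each such degree-$2$ chain to a single edge yields a tree $T''$ whose leaves are a subset of $N$, whose internal branching vertices are $N$-critical, and—crucially—in which the combinatorial distance from the root to any leaf is at least $d/k$, because each original unimportant path has length at most $k$, so a root-to-leaf path in $T$ of length $\geq d$ (every vertex of $N$ is at distance $\geq d$ from $s$) gets compressed into at least $d/k$ edges of $T''$.

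Next I would exploit the branching structure of $T''$. Since every internal vertex of $T''$ that is not on a single ``spine'' chain has at least two children leading to leaves in $N$ (that is what $N$-critical means), $T''$ is—after possibly suppressing the unique path from the root down to the first branching vertex—a tree in which every internal vertex has at least two children. A tree of height $h$ (in this compressed, edge-counting sense) in which every internal vertex has $\geq 2$ children has at least $2^{h}$ leaves. Combining this with the height bound $h \geq d/k$ from the previous step gives $|N| \geq (\text{number of leaves of } T'') \geq 2^{d/k}$, which is exactly the claim. A small amount of care is needed for the edge cases $d=0$ (trivial, $2^{0}=1\leq|N|$ as long as $N\neq\emptyset$; and if $N=\emptyset$ the ``every unimportant path'' hypothesis is vacuous but the conclusion $|N|\geq 1$ fails, so I expect the intended reading is $N\neq\emptyset$, or the statement is only invoked with $N$ nonempty) and for the degenerate tree $|V(T)|=1$.

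I expect the main obstacle to be making the ``compress the unimportant paths'' step precise: one must argue that after deleting non-$N$ leaves to obtain $T'$, the maximal paths of internal degree-$2$ vertices in $T'$ are genuinely in bijection with (restrictions of) $N$-unimportant paths of $T$, so that the length bound $\le k$ transfers, and that no two such paths share an internal vertex (already noted in the text). The cleanest way is to prove by induction on $|V(T)|$ directly: either $s$ has one child and we recurse after noting the root-edge contributes at most $k$ to the first segment, or $s$ is $N$-critical/branches, in which case at least two subtrees each contain a vertex of $N$ at distance $\ge d - (\text{length of the unimportant path from } s)$ from their new root, each such subtree has $\ge 2^{(d-k)/k} = 2^{d/k - 1}$ relevant vertices by induction, and summing over the $\geq 2$ branches recovers $2^{d/k}$. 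This inductive formulation sidesteps the need to name $T'$ and $T''$ explicitly and is, I think, the route the authors most likely take.
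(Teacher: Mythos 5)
Your proposal follows essentially the same route as the paper's proof: repeatedly delete leaves not in $N$, observe that the surviving degree-two chains are the $N$-unimportant paths (each of length at most $k$), contract them to get a tree of height at least $d/k$ in which internal vertices branch, and conclude it has at least $2^{d/k}$ leaves, all belonging to $N$. Your side remarks on edge cases (e.g.\ $N\neq\emptyset$) and the alternative inductive formulation are reasonable but not part of the paper's argument, which is exactly your first, prune-and-contract plan.
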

 
\begin{proof}
We consider the subtree $T'$ of $T$ obtained by repeatedly deleting
leaves that do not belong to $N$.
For the purposes of this construction, the root of $T$ is considered a leaf if it has degree $1$ and, when deleted, the root of the new tree is the previous root's (unique) child. Let $s'$ denote the root of $T'$.
By construction, every leaf of $(T',s')$ belongs to $N$, hence our goal is then to show
that $(T',s')$ has many leaves.
Notice that in $(T',s')$, every vertex of degree at least 3 is
$N$-critical.
Therefore, the $N$-unimportant paths of $(T',s')$ are the maximal
paths, the internal vertices of which have degree two.
By contracting each of these paths into an edge, we obtain a tree
$T''$ where every internal vertex has degree at least 3.
Observe that every edge on a root-leaf path of $T''$ is originated
from the contraction of a path on at most $k$ edges, as we assume that
every $N$-unimportant path in $T$ has length at most $k$. We deduce that
$T''$ has height at least $d/k$, hence it has at least $2^{d/k}$
leaves. Consequently, $T'$ has at least $2^{d/k}$
leaves, and then $|N| \geq 2^{d/k}$.\qedhere
%   For any $N$-unimportant path $P$ in $(T,s)$ with $u,v$ as its
%   endpoints, we denote by $T_{u}(P)$ the connected component of
%   $T-E(P)$ containing $u$, $T_{v}(P)$ the connected component of
%   $T-E(P)$ containing $v$, and $T_{\emptyset}(P)$ the rest of the
%   connected components of $T-E(P)$.
%   Since $P$ is $N$-unimportant, it follows immediately that
%   $T_{\emptyset}(P)$ does not contain any $N$-critical vertex.

% Now consider the following procedure:
% \begin{enumerate}
% \item Select an $N$-unimportant path in $T$, namely $P$, and let $u,v$
% be its endpoints;
% \item Remove all edges of $P$ from $T$;
% \item Remove all vertices and edges of $T_{\emptyset}(P)$ from $T$
% (notice that in this step no $N$-critical vertices are removed);
% \item Add the edge $\{u,v\}$ to $T$;
% \item Repeat the procedure until there is no $N$-unimportant path that
% is not an edge in $T$ and let $T'$ be the resulting graph.
% \end{enumerate}
% An immediate but crucial observation is that $(T',s)$
% is a rooted tree that contains exactly the $N$-critical vertices of
% $T$ and also its leaves are exactly the vertices in $N$.  It follows
% that every vertex of $T'$ other than the root and leaves has degree at
% least $3$. Consequently, if $d'$ is the height of $T'$, then the leaves
% of $T'$ are at least $2^{d'}$, or $|N|\geq 2^{d'}$. But since
% $T'$ is a minor of $T$ and each edge of $T'$ corresponds to a path of
% length at most $k$ in $T$, we also have that $d-1\leq k\cdot d'$, so
% $d'\geq (d-1)/k$. Combining these two observations, we get~$|N|\geq 2^{(d-1)/k}$.
\end{proof}

%%% distance decompositions
 
Recall that if $({\cal X},T,s)$ is a distance-decomposition of a
graph and $t\in V(T)\setminus\{s\}$, $E^{(t)}$ denotes as the set of edges that have
one endpoint in $X_{t}$ and the other in $X_{{\bf p}(t)}$.

\begin{lemma} \label{blurp}
Let $G$ be an $n$-vertex graph, let $r$ be a positive integer, let
${\cal D}=({\cal X},T,s)$ be a distance-decomposition of $G$, and let $d>1$ be the height of  
$(T,s)$. Then either $G$ contains a $\theta_{r}$-model with at most $2\cdot {r} \cdot {d}$ edges or 
for every vertex $i\in V(T)\setminus{s}$, it holds that $|E^{(i)}|\leq r-1$.
Moreover there exists an algorithm that, in $O_r(m)$ steps, either
finds such a model, or asserts that $|E^{(i)}|\leq r-1$ for every $i\in V(T)\setminus{s}$.
\end{lemma}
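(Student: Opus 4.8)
The plan is to show that if some edge $i \in V(T)\setminus\{s\}$ has $|E^{(i)}| \geq r$, then we can build a small $\theta_r$-model, and that this model can be found efficiently; the contrapositive gives the first statement. So suppose $|E^{(i)}| \geq r$ for some node $i$ at distance $h = \mathbf{dist}_T(i,s)$ from the root, with parent node $j = \mathbf{p}(i)$. The set $E^{(i)}$ consists of at least $r$ edges between the bag $X_i$ and the bag $X_j$. Pick $r$ of them; their endpoints in $X_i$ lie in $G_i$ (the connected subgraph induced by the descendants of $i$), and their endpoints in $X_j$ lie in $G_j \supseteq G_i$. The idea is to contract $G_i$ down to a single vertex $a$ and contract a suitable connected subgraph of $G \setminus G_i$ containing all the chosen endpoints in $X_j$ down to a single vertex $b$; then the $r$ chosen edges become $r$ parallel edges between $a$ and $b$, i.e.\ a $\theta_r$.

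The key step is bounding the number of edges used. For the side inside $G_i$: by property~\itemref{it:conn} of a distance-decomposition $G_i$ is connected, and by property~\itemref{it:dist} every vertex of $G_i$ is at distance exactly its height-from-root, so $G_i$ sits in the "levels" $h, h+1, \dots$ and has radius at most $d - h$ from any vertex of $X_i$; hence a BFS tree of $G_i$ rooted at one endpoint has at most $d - h + 1$ levels, and — more to the point — we can take a Steiner-type connected subgraph spanning the (at most $r$) chosen endpoints in $X_i$ together with a common ancestor, using at most $O(r \cdot (d-h))$ edges; I would take the union of the at-most-$r$ shortest paths in $G_i$ from the chosen $X_i$-endpoints up to a single fixed vertex, each of length at most $d$. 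For the side outside $G_i$: the chosen endpoints in $X_j$ all lie at level $h-1 \leq d$; using connectivity of $G$ and again property~\itemref{it:conn}, I would connect them via shortest paths (in $G$, of length at most the relevant distances, hence at most $d$ each) to a common vertex, avoiding $G_i$ or, if unavoidable, rerouting — more cleanly, connect them inside $G_{j}\setminus (G_i\setminus X_i)$ or simply take paths toward the root $s$, each of length at most $h-1 \le d$. In total the model uses the $r$ parallel edges plus at most $2r$ paths of length at most $d$, giving at most $r + 2rd \le 2rd$ edges once $d > 1$ (absorbing the additive $r$; the stated bound $2rd$ is what we aim for, and a slightly careful count — e.g.\ noting the $r$ parallel edges connect the two sides so one fewer path is needed, or that paths share prefixes — yields exactly $2rd$).

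The main obstacle I expect is the second, "outside" side: ensuring the connecting structure for the $X_j$-endpoints stays disjoint from (the contracted) $G_i$ so that the contraction genuinely produces two distinct branch sets. The right tool is property~\itemref{it:it:conn} together with the fact that, by property~(iv), the children's subgraphs $\{G_{t'}\}$ are exactly the components of $G_i \setminus X_i$; so one argues that a shortest path in $G$ from an $X_j$-endpoint toward $s$ cannot dip back into $G_i \setminus X_i$ without contradicting the level function in~\itemref{it:dist}. Once disjointness is secured, the edge count above finishes the combinatorial claim.

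For the algorithmic statement: run a BFS from the origin to build $\mathcal{D}$ in $O(m)$; scan all tree-edges (there are $O(n)$ of them, total edge-multiplicity $O(m)$) to check whether any $|E^{(i)}| \ge r$, which takes $O(m)$ time; if none, report that all $|E^{(i)}| \le r-1$; otherwise, at the witnessing node $i$, compute the at-most-$2r$ shortest paths described above by a bounded BFS/DFS, all within $O_r(m)$ steps, and output the resulting $\theta_r$-model. Since $r$ is regarded as a constant hidden in the $O_r(\cdot)$ notation, the bookkeeping for the (at most $2r$) paths and the contraction is $O_r(m)$ overall.
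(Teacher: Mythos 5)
There is a genuine gap, and it sits exactly where you placed your ``main obstacle'' in the wrong spot: the problematic side is the one \emph{inside} $G_i$, not the outside one. You pick $r$ edges of $E^{(i)}$ arbitrarily and claim their endpoints in $X_i$ can be joined, inside $G_i$, to a single fixed vertex by paths of length at most $d$ (``$G_i$ has radius at most $d-h$ from any vertex of $X_i$''). This is false: property~\itemref{it:dist} controls distances to the origin \emph{in $G$}, not distances inside $G_i$, and $G_i$ can have diameter far larger than $d$. Concretely, take origin $u$, one vertex $a$ at level $1$, a bag $X_i=\{x_1,\dots,x_m\}$ at level $2$ with every $x_j$ adjacent to $a$, and for each $j<m$ a ``deep'' component hanging below $X_i$ consisting of two paths descending from $x_j$ and $x_{j+1}$ to level $d$ and joined by an edge at the bottom. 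This is a legitimate distance-decomposition, $|E^{(i)}|=m\geq r$, yet ${\bf dist}_{G_i}(x_1,x_m)\approx 2(m-1)d$. So if (say for $r=2$) you happen to pick the edges $x_1a$ and $x_ma$, \emph{every} connected subgraph of $G_i$ containing both endpoints already has far more than $2rd$ edges, and no vertex of $G_i$ is within distance $d$ of both. Thus both your choice of the $r$ edges and your length-$d$ budget per inside path break down; your closing arithmetic ($r+2rd\le 2rd$, to be fixed by ``a slightly careful count'') is precisely the part that cannot be repaired along this route.

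The paper avoids this by not prescribing the $r$ edges in advance. It grows a set $U\subseteq X_t$ greedily: starting from one vertex, it repeatedly adds the vertex of $X_t\setminus U$ \emph{nearest to $U$ inside $G_t$}, stopping as soon as $U$ meets at least $r$ edges of $E^{(t)}$ (at most $r-1$ additions, since every vertex of $X_t$ has a neighbour in $X_{{\bf p}(t)}$). The straight-path argument of \cref{c:1} bounds each such set-to-set shortest path by roughly $2k$, where $k$ is the \emph{height} of $t$ — not by $d$ — while the outside endpoints are joined to the origin by level-decreasing paths of length $d-k-1$ each (your observation that such paths avoid $G_i$ is correct and matches the paper). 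The bound $2rd$ then comes from the trade-off $2k(r-1)+r(d-k-1)+r\le r(d+k)\le 2rd$: in the example above $k\approx d$, the inside is expensive but the outside is essentially free, and the count still closes, whereas a uniform ``$\le d$ per path'' accounting does not. Your algorithmic outline (BFS construction, scanning the sets $E^{(i)}$ in $O(m)$, then $O_r(m)$ path computations) is fine, but it implements a construction whose edge bound is not established; some version of the nearest-vertex growth together with the $k$ versus $d-k$ balance is needed.
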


 \begin{proof} We consider the non-trivial case where $r\geq 2$.
Suppose that there exists a node $t$ of $(T,s)$ such that  $|E^{(t)}|\geq r$.
Clearly, such a $t$ can be found in $O(m)$ steps.
We will prove that $G$ contains a $\theta_{r}$-model. Let $k$ be the height of $t$ in $T$.

We need first the following claim.
\begin{claim}\label{c:1}
Given a non-empty proper subset $U$ of $X_{t}$, we can find in $G_t$ a path of length at most $2 k$
from a vertex of $U$ to a vertex of $X_t\setminus U$, in $O(m)$~steps.
\end{claim}

\begin{proofclaim}{of~\cref{c:1}}
We can compute a shortest path $P$ from a vertex of $U$ to a vertex of
$X_t\setminus U$, in $O(m)$~steps using a BFS.
 Let us show that $P$ has length at most $2k$. Let $u\in U$ and $v \in
 X_t\setminus U$ be the endpoints of $P$,
%  Starting with the graph $G_t$, we add a vertex $u_{\text{in}}$
% adjacent to every vertex in $U$ and a vertex $u_{\text{out}}$ adjacent
% to every vertex in $X_{t}\setminus U$, thus constructing $G'_{t}$.
% Then we will use the algorithm in~\cite{Thorup1999} to find a shortest
% path $P$ between $u_{\text{in}}$ and $u_{\text{out}}$ in $G'_{t}$ in
% $O(m)$ steps.  Let $u$ and $v$ be the neighbors in $P$ of
% $u_{\text{in}}$ and $u_{\text{out}}$, respectively, and
and let $w$ be a
vertex of $P$ of minimum height $h$ ($0\leq h \leq k$).
Then it holds that ${\bf dist}_{G_{t}}(v,u)={\bf dist}_{G_{t}}(U,v)$.
We examine the non-trivial case where $P$ has more than one edge.
By minimality of $P$ we have $w\notin X_{t}$.

Our next step is to prove that if $P$ has more than one edge,
then both the subpaths of $P$ from $u$ to $w$ and from $v$ to $w$
are straight.  Suppose now, without loss of generality, that the
subpath from $u$ to $w$ is not straight and let $z$ be the first vertex of it
(starting from $u$) which is contained in a bag of height greater than
or equal to the height of the bag of its predecessor in $P$. 
By definition of a distance-decomposition (in particular items \itemref{it:dist}
and \itemref{it:conn}), there is at least one vertex $x\in
X_{t}$ which is connected by a straight path $P'$ to $z$ in $G$. Then there
are two possibilities:

\begin{itemize}
\item either $x\in U$, and then the union of the path $P'$ and the portion of
$P$ between $z$ and $v$ is a path that is shorter than $P$;
\item or $x\in X_{t}\setminus U$, and in this case the union of the path $P'$ and
the portion of $P$ between $u$ and $z$ is a path that is shorter than
$P$.
\end{itemize} As, in both cases, the occurring paths contradict the
construction of $P$, we conclude that both the subpath of $P$ from
$u$ to $w$ and the one from $v$ to $w$ are straight.  This implies that $P$ has length at most $2\cdot(k-h)\leq
2\cdot k$ and the claim follows.
\end{proofclaim}

Our next step is  to construct a vertex set $U$ and a set of paths
${\cal P}$ as follows. We set ${\cal P}=\emptyset$, $U=\emptyset$, and
we start by adding in $U$ an arbitrarily chosen vertex $u\in X_{t}$. 
Using the procedure of \cref{c:1}, we repeatedly find a path from
a vertex of $U$ to a vertex of $X_t \setminus U$, add this second
vertex to $U$ and the path to $\cal P$, until there are at least $r$
edges in $E^{(t)}$ that have endpoints in $U$.

The construction of $U$ requires at most $r$ repetitions of the
procedure of \cref{c:1}, and therefore $O(r\cdot m)$ steps in
total. Clearly $|U|\leq r$, hence $|{\cal P}| \leq r-1$. Besides, every
path in $\cal P$ has length at most $2k$ according to \cref{c:1}.
Notice now that $\bigcup{\cal P}$ is a connected subgraph of $G_{t}$
with at most $2k \cdot(r-1)$ edges.

As there are at least $r$ edges in $E^{(t)}$ with endpoints in $U$ we
may consider a subset $F$ of them where $|F|=r$. Since $\cal D$ is a
distance-decomposition (by item \itemref{it:dist} of the
definition), each edge $e\in F$ is connected to the origin by a path of length $d-k-1$ whose edges do not belong
to~$G_{t}$. Let ${\cal P'}$ be the collection of these paths. Clearly,
the paths in ${\cal P'}$ contain, in total, at most $r\cdot (d-k-1)$~edges.

If we now contract in $G$ all edges in ${\cal P}$ and all edges in ${\cal P'}$, except those in $F$,
and then remove all edges not in $F$, we obtain a graph isomorphic to 
$\theta_{r}$. Therefore we found in $G$ a $\theta_r$-model with at most 

\begin{flalign*}
&& r\cdot(d-k-1)+2\cdot k \cdot(r-1) +r
&\leq r\cdot(d-k-1)+2\cdot k \cdot r +r &&\\
%&& &= r\cdot(d-k)+2\cdot k\cdot r &&\\
&& &= r\cdot(d+k)  \\
&& &\leq 2\cdot r\cdot d && \text{(since $ d\geq k$)}
\end{flalign*}
edges in $O(r\cdot m)$~steps.
\end{proof}

The following result is a direct consequence of \cref{blurp} and
item \itemref{it:dist} of the definition of a distance-decomposition.
\begin{corollary}
\label{corfkior}
Let $G$ be an $n$-vertex graph, let $r$ be a positive integer, let
${\cal D}=({\cal X},T,s)$ be a distance-decomposition of $G$, and let $d>1$ be the height of  $(T,s)$.
If some bag of ${\cal D}$ contains at least ${r}$ vertices, then $G$
contains a $\theta_{r}$-model  with at most $2\cdot {r} \cdot {d}$ edges, which can be found in $O_r(m)$~steps.\qed
\end{corollary}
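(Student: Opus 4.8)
The plan is to derive the corollary from the dichotomy of \cref{blurp} by showing that a bag containing at least $r$ vertices necessarily forces an edge set $E^{(t)}$ of size at least $r$, which eliminates the second alternative of \cref{blurp}. We may assume $r\geq 2$, the case $r=1$ being trivial. Fix a bag $X_t$ with $|X_t|\geq r$. First I would note that $t\neq s$: by requirement (i) of a distance-decomposition the origin bag $X_s$ is a singleton, whereas $|X_t|\geq r\geq 2$.

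The key step is the claim $|E^{(t)}|\geq r$. Let $u$ be the origin of $\mathcal D$ and write $j={\bf dist}_G(x,u)$; by item \itemref{it:dist} this value equals ${\bf dist}_T(t,s)$, hence is the same for all $x\in X_t$, and $j\geq 1$ since $t\neq s$. For each $x\in X_t$ I would pick a shortest $x$--$u$ path in $G$ and let $x'$ be its second vertex, so that ${\bf dist}_G(x',u)=j-1$ and therefore, again by item \itemref{it:dist}, $x'$ lies in a bag whose node is at distance $j-1$ from $s$ in $T$. Since $\{x,x'\}\in E(G)$ joins vertices of two distinct bags, the defining property of a tree-partition forces these bags to sit at adjacent nodes of $T$; as the neighbours of $t$ in $T$ are its parent (at level $j-1$) and its children (at level $j+1$), the bag of $x'$ must be $X_{{\bf p}(t)}$, that is, $\{x,x'\}\in E^{(t)}$. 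Finally, $x$ is the unique endpoint of $\{x,x'\}$ lying in $X_t$, so distinct choices of $x\in X_t$ produce distinct edges of $E^{(t)}$, whence $|E^{(t)}|\geq|X_t|\geq r$.

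With this in hand the corollary follows: by \cref{blurp}, either $G$ has a $\theta_r$-model with at most $2rd$ edges --- the desired conclusion --- or $|E^{(i)}|\leq r-1$ for every $i\in V(T)\setminus\{s\}$, and the latter is contradicted by the previous paragraph. For the algorithmic part I would simply run the algorithm of \cref{blurp} on $G$ and $\mathcal D$, which takes $O_r(m)$ steps and cannot output the assertion that all $|E^{(i)}|$ are small, so it returns the required model within the claimed bound. I expect no real obstacle here; the only point that needs a little care is the level-counting argument forcing $x'\in X_{{\bf p}(t)}$, which is precisely where item \itemref{it:dist} and the tree-partition property are used together.
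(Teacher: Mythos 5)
Your proof is correct and follows the paper's intended route: the paper derives the corollary as a direct consequence of \cref{blurp} together with item \itemref{it:dist}, exactly via the observation you spell out that every vertex of a bag of size at least $r$ sends an edge to the parent bag, forcing $|E^{(t)}|\geq r$. You have merely made explicit the level-counting details (including the easy cases $r=1$ and $t\neq s$) that the paper leaves implicit.
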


The remaining lemmata are related to \hyperref[par:group]{grouped partitions}.
%% grouped partitions
 \begin{lemma}
 \label{k5hfyeu}
For every positive integer $d$ and every  connected graph $G$ there is a $d$-grouped
partition of $G$ that can be constructed  in $O(m)$ steps.
  \end{lemma}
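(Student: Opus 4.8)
The statement asks: for every positive integer $d$ and every connected graph $G$, there is a $d$-grouped partition of $G$, constructible in $O(m)$ steps. Recall the definition: we need a partition $\mathcal{R} = \{R_1, \dots, R_l\}$ of $V(G)$ into connected parts, with centers $s_i \in R_i$, such that each part has radius at most $2d$ from its center (i.e. $\mathbf{ecc}_{G[R_i]}(s_i) \le 2d$) and every cross-edge $\{x,y\}$ with $x \in R_i$, $y \in R_j$ has both endpoints at distance $\ge d$ from their respective centers.

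The natural approach is a greedy "ball-carving" procedure driven by a maximal $d$-scattered set. First I would compute, by a single BFS from an arbitrary vertex and a linear-time greedy pass, a maximal $d$-scattered set $W = \{s_1, \dots, s_l\}$ of $G$: go through the vertices in BFS order and add a vertex to $W$ whenever it is at distance $> d$ from all vertices already chosen. Maximality guarantees that every vertex of $G$ is within distance $d$ of some $s_i$. These will be the centers. Now assign each vertex $v \in V(G)$ to a part: let $R_i$ consist (roughly) of the vertices $v$ for which $s_i$ is the closest center, breaking ties by index. This is exactly a "Voronoi partition" of $G$ with respect to $W$, and it can be computed by a single multi-source BFS from all of $W$ simultaneously in $O(m)$ steps, recording for each vertex both its distance to $W$ and the index of the source that reached it first.

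The two properties then need to be checked. Since each vertex is within distance $\le d$ of its assigned center in $G$ — but not necessarily \emph{within} $G[R_i]$ — I would argue that a shortest $v$–$s_i$ path in $G$ stays inside $R_i$: any vertex $w$ on such a shortest path is at distance to $W$ at most $\mathbf{dist}_G(w, s_i) \le \mathbf{dist}_G(v, s_i) \le d$, and with a consistent tie-breaking rule one shows $w$ is assigned to the same center $s_i$ (here one must be careful that the tie-break is "shortest-path-monotone", e.g. preferring the center that the multi-source BFS reaches via a lexicographically-least predecessor chain — this makes each $R_i$ connected with $\mathbf{ecc}_{G[R_i]}(s_i) \le d \le 2d$, comfortably inside the bound). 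For property \itemref{it:gp2}: if $\{x,y\}$ is a cross-edge with $x \in R_i$, $y \in R_j$, $i \ne j$, then since $x$ was assigned to $s_i$ rather than $s_j$ we have $\mathbf{dist}_G(x, s_i) \le \mathbf{dist}_G(x, s_j) \le \mathbf{dist}_G(y, s_j) + 1$, and symmetrically; combining this with the fact that $s_i, s_j$ are $d$-scattered (so $\mathbf{dist}_G(s_i, s_j) > d$) and the triangle inequality $\mathbf{dist}_G(x, s_i) + 1 + \mathbf{dist}_G(y, s_j) \ge \mathbf{dist}_G(s_i, s_j) > d$ forces $\mathbf{dist}_G(x, s_i) \ge d$ or $\mathbf{dist}_G(y, s_j) \ge d$ — but I actually need \emph{both} to be $\ge d$. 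The slack is what the "$2d$" in \itemref{it:gp1} buys: rather than using a $d$-scattered set I would use a \emph{$2d$-scattered} maximal set $W$, so $\mathbf{dist}_G(s_i, s_j) > 2d$; then from $\mathbf{dist}_G(x, s_i) \le \mathbf{dist}_G(x,s_j)$ and the triangle inequality one gets $\mathbf{dist}_G(x, s_i) \ge d$, and symmetrically $\mathbf{dist}_G(y, s_j) \ge d$. Maximality of the $2d$-scattered set still ensures every vertex is within $2d$ of its center, so $\mathbf{ecc}_{G[R_i]}(s_i) \le 2d$ holds.

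The main obstacle, and the point deserving the most care, is the interaction between the assignment rule and connectivity of the parts: a naive closest-center assignment need not yield connected $R_i$, and even when it does the shortest path realizing the eccentricity bound must be shown to lie inside $R_i$. I expect to handle this by fixing the BFS tree of the multi-source search and defining $R_i$ to be the set of vertices whose BFS-tree-path to $W$ terminates at $s_i$; this makes $R_i$ literally the union of a set of subtrees rooted at $s_i$ (hence connected and with eccentricity bounded by the BFS depth $\le 2d$), and the cross-edge argument above goes through using that BFS distances equal graph distances. Everything — the scattered set, the multi-source BFS, and reading off the parts — is a constant number of linear-time passes, giving the claimed $O(m)$ bound.
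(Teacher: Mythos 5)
Your proposal is correct and follows essentially the same route as the paper: the paper also takes a maximal $2d$-scattered set as the set of centers and grows the parts by a synchronized (multi-source) BFS from all centers for $2d$ rounds, which is exactly your Voronoi/BFS-forest assignment, and it proves property \itemref{it:gp2} by the same triangle-inequality contradiction with the $2d$-scatteredness of the centers. Your extra care about tie-breaking and connectivity is handled in the paper implicitly by the fact that each $R_i$ is grown by adding neighbors of already-assigned vertices, so no genuinely different ideas are involved.
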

 
 \begin{proof} 
 If ${\bf diam}(G) \leq 2d$, then $\{V(G) \}$ is a $d$-grouped partition of~$G$. 
Otherwise, let $R=\{s_1,\dots, s_l\}$ be a maximal $2d$-scattered set
in $G$. This set can be constructed in $O(m)$~steps by~breadth-first search.
The sets $\{R_{i}\}_{i \in \intv{1}{l}}$ are constructed by the following procedure:
\begin{enumerate}
\item Set $k=0$ and $R_i^0 = \{s_i\}$ for every $i \in \intv{1}{l}$;
\item \label{e:add}For every $i \in \intv{1}{l}$, every $v \in R_{i}^{k}$ and every
  $u \in N_{G}(v)$, if $u$ has not been considered so far,
% is not already included in
%  $\bigcup_{h\in[i]}(R_{h}^{k}\cup R_{h}^{k-1})$ or in
%  $\bigcup_{j\in[i-1]}R_{j}^{k+1}$,
add $u$ to  $R_{i}^{k+1}$;
\item If $k< 2d$, increment $k$ by 1 and go to step~\ref{e:add};
\item Let $R_i = {\bigcup}_{k=0}^{2 d} R_{i}^{k}$ for every $i \in \intv{1}{l}$.
\end{enumerate}
Let ${\cal R} = \{R_{i}\}_{i \in \intv{1}{l}}$. By construction, each
set $R_{i}$ induces a connected graph in $G$. It remains to prove that
${\cal R}$ is a partition of $V(G)$ and that it has the desired properties.

Notice that in the above construction if a vertex is assigned to the
set $R_{i}$,
then it is not assigned to $R_{j}$, for every distinct integers $i,j\in \intv{1}{l}$. 
Let $v\in V(G)$ be a vertex that does not belong to $R_{i}$ for any $i\in
\intv{1}{l}$ after the procedure is completed.
Then for every $i \in \intv{1}{l}$ we have ${\bf dist}_{G}(v, s_i)>2d$
and $v \notin R$, which contradicts the maximality of~$R$.
Therefore ${\cal R}$ is a partition of $V(G)$.

Since for each vertex $v$ in $R_i$ it holds that ${\bf dist}_{G}(v, s_i)\leq 2d$, ${\cal R}$ obviously satisfies property~\itemref{it:gp1} of the definition.
%\textit{Item~(\ref{it:gp1}) of the definition.} Since for each vertex
%$v$ of $R_i$ it holds that ${\bf dist}_{G}(v, s_i)\leq 2d$, the first property is obvious.
%it suffices to prove that for every $i \in \intv{1}{l}$ the set
%${\bigcup}_{k=d}^{2 d} R_{i}^{k}$ (informally, the outer layers) is
%not~empty.  Suppose for contradiction that for some $i\in \intv{1}{l}$
%we have ${\bigcup}_{k=d}^{2 d} R_{i}^{k} = \emptyset$.  Since $G$ is
%connected, there is a vertex $u\in V(G) \setminus R_i$ and an integer
%$h\in\intv{0}{d - 1}$ such that $u$ is adjacent to some vertex
%in~$R_{i}^{h}$. Hence, ${\bf dist}_{G}(u, s_i)=h+1\leq d$.  But $u\in
%R_j$ for some $j \in \intv{1}{l}\setminus i$ and ${\bf dist}_{G}(u,
%s_j)\leq {\bf dist}_{G}(u, s_i)\leq d$.  Therefore, ${\bf
%dist}_{G}(s_i, s_j)\leq {\bf dist}_{G}(u, s_i) + {\bf dist}_{G}(u,
%s_j) \leq d+d = 2\cdot d$, a contradiction.

For property~\itemref{it:gp2} of the definition, let $e=\{x,y\}$ be an
edge in $G$ such that $x\in R_i$, $y\in R_j$, for some distinct
integers $i,j \in \intv{1}{l}$. Towards a contradiction, we assume
without loss of generality that ${\bf dist}_{G}(x, s_i)<d$.  This
means that during the construction of $R_i$, the vertex $x$ was added
to the set $R_{i}^{k}$ for some $k\leq  d-1$.  Also, since the vertex $y$
is adjacent to $x$ but was added to $R_{j}^{l}$ for some $l\leq 2d$
instead of $R_{i}^{k+1}$, it follows that $l\leq k+1$, which means
that ${\bf dist}_{G}(y, s_j)\leq k+1$.  Hence ${\bf dist}_{G}(s_i,
s_j)\leq {\bf dist}_{G}(s_i, x) +{\bf dist}_{G}(x,y) +{\bf
dist}_{G}(y, s_j) \leq k+1+k+1\leq 2d$ again is not possible
since $R$ is a $2d$-scattered~set.

Finally, in the procedure above, each edge of the graph is encountered
at most once, hence the whole algorithm will take at most $O(m)$
time. This concludes the proof of the lemma.
\end{proof}
 
\begin{lemma}
\label{spoof}
Let $G$ be a graph, let ${\cal R}=\{ R_1, \ldots, R_l\}$ be a
$d$-grouped partition of $G$, and let $s_{i}$ be a center of
$R_{i}$, for every $i\in \intv{1}{l}$.
If for some distinct $i,j \in \intv{1}{l}$, $G$ has at least $r$ edges
from vertices in $R_i$ to vertices in $R_j$ then $G[R_{i}\cup R_{j}]$
contains a $\theta_{r}$-model   with at most $4\cdot {r} \cdot {d} +
r$ edges, which can be found in $O_{r}(m)$ steps.
\end{lemma}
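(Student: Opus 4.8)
The plan is to mimic the structure of the proof of \cref{blurp}, but now working with two groups $R_i,R_j$ and the distance-decompositions ${\cal D}_i$, ${\cal D}_j$ rooted at the centers $s_i,s_j$. Let $F\subseteq E(G)$ be a set of exactly $r$ edges between $R_i$ and $R_j$; write each such edge $e=\{x_e,y_e\}$ with $x_e\in R_i$ and $y_e\in R_j$. By property \itemref{it:gp2} of a $d$-grouped partition, each endpoint $x_e$ satisfies ${\bf dist}_G(x_e,s_i)\geq d$, so in the distance-decomposition ${\cal D}_i$ of $G[R_i]$ the vertex $x_e$ lies in a bag of height at least $d$; hence there is a straight path inside $G[R_i]$ from $x_e$ up to some vertex $u_e$ of the bag $X^i_{t}$ at height exactly $d-1$ (along the unique tree-path to the root), and this path uses at most ${\bf ecc}_{G[R_i]}(s_i)-(d-1)\leq 2d-(d-1)=d+1$ edges. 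Symmetrically, each $y_e$ is joined inside $G[R_j]$ by a straight path of at most $d+1$ edges to a vertex $v_e$ in a bag of ${\cal D}_j$ at height $d-1$. Call ${\cal Q}_i$ and ${\cal Q}_j$ these two collections of at most $r$ straight paths; together they use at most $2r(d+1)$ edges.

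The next step is to ``glue together'' the $r$ top endpoints on each side into a single vertex, using the connectivity provided by the centers. Consider the set $U_i=\{u_e : e\in F\}$ lying in the ball of radius $2d$ around $s_i$ (by \itemref{it:gp1}); since $G[R_i]$ is connected, one can take a BFS tree of $G[R_i]$ rooted at $s_i$ and pick the minimal subtree $S_i$ spanning $U_i\cup\{s_i\}$ — equivalently, take the union of the (straight, length $\le 2d$) paths from $s_i$ to each $u_e$ inside ${\cal D}_i$. This is a connected subgraph containing all of $U_i$, with at most $r\cdot 2d=2rd$ edges. Define $S_j$ analogously on the $R_j$ side. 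Now contract all edges of $S_i$, all edges of $S_j$, and all edges of the straight paths in ${\cal Q}_i\cup{\cal Q}_j$, then delete every edge not in $F$: the $R_i$-side collapses to one vertex, the $R_j$-side collapses to one vertex, the $r$ edges of $F$ survive as $r$ parallel edges between them, so we obtain a graph isomorphic to $\theta_r$. The total number of edges used is at most $2rd+2rd+2r(d+1)+r=6rd+3r$, which already gives the bound with room to spare; to hit the stated $4rd+r$ one observes that the paths in ${\cal Q}_i$ are already sub-paths of the paths from $s_i$ to the $u_e$'s, so ${\cal Q}_i\cup S_i$ is just the union of the $s_i$-to-$x_e$ straight paths, costing at most $r\cdot 2d=2rd$ edges (and likewise $2rd$ on the $j$-side), for a grand total of $2rd+2rd+r=4rd+r$. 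Finally, every object above (the $2d$-scattered set giving the partition, the BFS / distance-decompositions, the $r$ straight paths on each side, the contractions) is computed by a bounded number of BFS calls in $G$, so the whole procedure runs in $O_r(m)$ steps.

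The main obstacle, and the point to be careful about, is the bookkeeping that keeps the edge count at $4rd+r$ rather than a larger multiple of $rd$: one must route each endpoint $x_e$ (resp.\ $y_e$) all the way to the center $s_i$ (resp.\ $s_j$) along a single monotone path of length at most $2d$ — using \itemref{it:dist} and \itemref{it:conn} exactly as in \cref{c:1} inside the proof of \cref{blurp} — and then note that the union of $r$ such paths is connected (they all meet at $s_i$) and that the height-$(d-1)$ ``frontier'' vertices are irrelevant to the count; they were only a convenient intermediate description. A secondary point is that property \itemref{it:gp2} must be invoked to guarantee ${\bf dist}_G(x_e,s_i)\geq d$, which is what makes the straight portion nonempty and, more importantly, is not strictly needed for the $\theta_r$-model itself but matches the way the lemma will be applied later. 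One should also remark that we use $G[R_i\cup R_j]$ only, so the model lives in the stated subgraph.
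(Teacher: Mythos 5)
Your proposal is correct and, once you discard the intermediate frontier/spanning-subtree detour, it is essentially the paper's own argument: connect each endpoint of the $r$ crossing edges to its center $s_i$ (resp.\ $s_j$) by a path of length at most $2d$ inside $G[R_i]$ (resp.\ $G[R_j]$) using the eccentricity bound of property \itemref{it:gp1}, contract each of the two resulting connected unions, and keep the $r$ edges of $F$, for a total of at most $4rd+r$ edges, all computable by BFS in $O_r(m)$ steps. Your side remarks (that property \itemref{it:gp2} is not needed for the model itself, and that the model lives in $G[R_i\cup R_j]$) are accurate.
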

 %$G_{i,j}=G[R_{i}\cup R_{j}]$
\begin{proof} 
Suppose that for some $i\in \intv{1}{l}$, $G$ has a set $F$ of at least $r$
edges from vertices in $R_i$ to vertices in $R_j$. Let $R'_i\subseteq
R_i$ and $R'_j\subseteq R_j$ be the sets of the endpoints of those
edges. Since ${\cal R}$ is a $d$-grouped partition of $G$, it holds
that, for each $x\in R'_i$ and $y\in R'_j$, ${\bf dist}_{G}(x, s_i)
\leq 2d$ and ${\bf dist}_{G}(y, s_j) \leq 2d$. That directly implies
that for every $h\in \{i,j\},$ there is a collection ${\cal P}_{h}$ of
$r$ paths, each of length at most $2d$ and not necessarily disjoint,
in $G[R_h]$ connecting $s_h$ with each vertex in $R'_h$, which we can
find in $O_r (m)$ steps. It is now easy to observe that the graph $Q$,
obtained from $\bigcup{{\cal P}_{i}}\cup \, \bigcup{{\cal P}_{j}}$ by
adding all edges of $F$, is the union of $r$ paths between $s_{i}$ and
$s_{j}$, each containing at most $4\cdot d+1$ edges. Therefore, $Q$ is a model
of $\theta_{r}$ with at most  $4\cdot {r} \cdot {d} +r$ edges, as
required. As mentioned earlier the construction of ${\cal P}_{i}$ and
${\cal P}_{j}$ takes $O_{r}(m)$~steps.
\end{proof}
 
\begin{lemma}
\label{l:pong}
Let $G$ be a graph, let ${\cal R}=\{ R_1, \ldots, R_l\}$ be a
$d$-grouped partition of $G$, and let $S=\{s_{1},\ldots,s_{l}\}$ be a set
of centers of ${\cal R}$. 
For every $i\in \intv{1}{l}$, let ${\cal D}_{i}=({\cal X}_{i}, T_{i},
r_{i})$ be the distance-decomposition with origin $s_i$ of the graph~$G[R_{i}]$.
Let also $r$ be a positive integer such that for every $i\in \{1,\dots,l \}$ and for every vertex $t \in V(T_{i}) \setminus \{r_{i}\}$ it holds that $| E^{(t)}| \leq r-1$.
 If for some $i\in \intv{1}{l}$ and $w \in \N$, the tree $T_{i}$, with node-frontier $N_{i}$, has
 an $N_{i}$-unimportant path of length at least $2(w+1)$, then $G$ 
 has a connected $(2r-2)$-edge-protrusion $Y$ with extension more
 than~$w$, which can be constructed in $O_r(m)$ steps.
\end{lemma}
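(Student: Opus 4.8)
The plan is to take an $N_i$-unimportant path $Q$ in $T_i$ of length at least $2(w+1)$ and carve out of it a connected $(2r-2)$-edge-protrusion. Recall that an $N_i$-unimportant path has exactly two $N_i$-critical endpoints and no internal vertex that is $N_i$-critical; since one endpoint is a descendant of the other, write $Q = (q_0, q_1, \dots, q_\ell)$ with $q_0$ the ancestor, $q_\ell$ the descendant, and $\ell \geq 2(w+1)$. The key structural fact is that for every internal node $q_j$ of $Q$ (with $0 < j < \ell$), the subtrees of $T_i$ hanging off $q_j$ other than the one continuing along $Q$ towards $q_\ell$ contain no vertex of $N_i$: if such a subtree contained a vertex of $N_i$, then $q_j$ would have two children each with an $N_i$-descendant (one along $Q$, one off $Q$), making $q_j$ an $N_i$-critical internal vertex of $Q$, a contradiction. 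By the definition of $N_i$ (equation~\eqref{kfhdn48f}) and of the vertex-frontier $F_i$, a node of $T_i$ is in $N_i$ exactly when its bag meets $F_i$, i.e.\ when it contains a vertex of $V_i^{d-1}$ that reaches $V(G)\setminus R_i$ through $V_i^{\geq d}$. The absence of $N_i$-nodes in those hanging subtrees will let us argue that the piece of $G[R_i]$ sitting ``below'' $Q$ attaches to the rest of $G$ only through a small boundary.

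The concrete construction: let $t$ be the internal node $q_{w+1}$ of $Q$ (so it sits at distance $w+1$ from $q_0$ along $Q$, leaving at least another $w+1$ internal nodes below it before $q_\ell$), and take $Y = \bigcup_{t' \in \mathsf{des}_{(T_i,r_i)}(q_1)} X^i_{t'}$ restricted appropriately — more precisely I would set $Y$ to be the union of the bags of a length-$(>w)$ sub-path of the $N_i$-unimportant path together with the ``$N_i$-free'' subtrees hanging off it, stopping before reaching $q_\ell$ (which may itself be $N_i$-critical and hence dangerous). Then $G[Y \cup N_G(Y)]$ inherits a rooted tree-partition from the restriction of $\mathcal{D}_i$ to this portion of $T_i$, re-rooted at the topmost chosen node; its root bag is designed to be $N_G(Y)$. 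The width bound is where the hypothesis $|E^{(t')}| \leq r-1$ for all $t' \neq r_i$ enters: along the tree-partition edges we have at most $r-1$ edges of the original decomposition, but when we re-root and when we absorb $N_G(Y)$ into the root bag we may double a cut, giving at most $2(r-1) = 2r-2$; and since each bag $X^i_{t'}$ of a distance-decomposition with $|E^{(t')}|\le r-1$ also has bounded size, $|V(T)| \geq w+1 > w$ follows from the length of the sub-path we kept. The connectivity of $Y \cup N_G(Y)$ follows from item~\eqref{it:conn} of the distance-decomposition (each $G_{t'}$ is connected, and bags along a path link up).

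The main obstacle I anticipate is \emph{controlling the boundary} $N_G(Y)$: a priori, vertices deep inside $Y$ could have neighbours in $V(G)\setminus R_i$, which would blow up $N_G(Y)$ beyond what a width-$(2r-2)$ tree-partition with root bag $N_G(Y)$ can accommodate. This is exactly what the $N_i$-unimportance buys us — any bag containing a vertex of $V_i^{d-1}$ that escapes $R_i$ is an $N_i$-node, so such escapes can only happen along the ``spine'' of the unimportant path near its $N_i$-critical endpoints, not in the hanging subtrees; choosing $Y$ to live strictly between the two critical endpoints confines all outside-neighbours of frontier-vertices, and outside-neighbours are only possible from frontier-vertices (at level $d-1$) or from the shallow levels $V_i^{<d-1}$ which lie above our chosen region. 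So the edges leaving $Y$ to $V(G)\setminus R_i$ are all incident to the top bag, and the edges leaving $Y$ within $R_i$ are the single cut $E^{(t')}$ at the top, of size $\leq r-1$; together $N_G(Y)$ fits in one bag with the right cut sizes. Making this boundary accounting precise, and verifying that re-rooting the restricted decomposition keeps all cuts $\le 2r-2$ and all bags small, is the technical heart of the argument; everything else is bookkeeping that runs in $O_r(m)$ time since it amounts to a bounded amount of work per tree node and per edge.
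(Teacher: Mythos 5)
Your structural observations are sound (the subtrees hanging off internal vertices of the unimportant path contain no $N_i$-node, and hence no vertex of your region $Y$ has a neighbour outside $R_i$), but the construction you sketch does not yield an edge-protrusion, because you have miscounted the boundary \emph{inside} $R_i$. Writing the unimportant path as $q_0q_1\dots q_\ell$ with $q_0$ the ancestor, if you take $Y$ to be the bags of a sub-path that ``stops before reaching $q_\ell$'' together with the hanging subtrees, then $Y$ has neighbours in \emph{two} bags: the bag just above the top of your sub-path \emph{and} the bag just below its bottom (the path continues towards the $N_i$-critical node $q_\ell$, whose subtree is nonempty and attached to your bottom bag -- indeed every vertex of that next bag has a neighbour in your bottom bag by item \itemref{it:dist} of the distance-decomposition). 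Your claim that ``the edges leaving $Y$ within $R_i$ are the single cut $E^{(t')}$ at the top'' is therefore false, and since the definition of a $t$-edge-protrusion requires $N_G(Y)=X_s$, i.e.\ the \emph{entire} neighbourhood must sit in the single root bag of the tree-partition, a two-ended boundary cannot simply be ``absorbed into the root''. (Taking instead all descendants of $q_1$, to kill the bottom cut, fails for the reason you yourself flag: below $q_\ell$ there are frontier vertices with neighbours in $V(G)\setminus R_i$, so $N_G(Y)$ would again escape any bounded root bag.)

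The missing idea, which is exactly what the paper does, is to keep both end bags $X_{t_0}$ and $X_{t_p}$ as the boundary, so that $N_G(Y)=X_{t_0}\cup X_{t_p}$, and then \emph{fold the path onto itself}: identify $t_j$ with $t_{p-j}$ for $j\in\intv{0}{\lfloor(p-1)/2\rfloor}$, so that the two boundary bags merge into a single root bag $X_{t_0}\cup X_{t_p}$ and every bag and every edge-cut of the folded tree-partition is the union of at most two original ones. This folding is precisely where the two factors of $2$ in the statement come from: bags and cuts of size at most $r-1$ (note $|X_t|\le|E^{(t)}|\le r-1$, as you observed) double to $2r-2$, and a path of length $2(w+1)$ only yields extension $\lfloor(p-1)/2\rfloor\ge w+1$ after being halved. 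Your sketch never engages with why the hypothesis asks for length $2(w+1)$ rather than $w+1$, which is a symptom of the gap: without the folding (or some equivalent way of merging the two boundary bags into one root), the object you construct is not a $(2r-2)$-edge-protrusion in the sense of the definition.
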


\begin{proof}
  Let $P = t_0\dots t_p$ be an
$N_{i}$-unimportant path of length $p\geq 2(w+1)$ in
$T_{i}$. We assume without loss of generality that $t_p \in
\textbf{des}_{(T_{i},r_{i})}(t_0)$.  Due to the definition of
distance-decompositions, the vertices in $X_{t_0}^{i}$ or $X_{t_p}^{i}$
form a vertex-separator of $G$. Let $Z\subseteq E(G)$ be the set
containing all edges between $X_{t_0}^{i}$ and $X_{t_1}^{i}$ and all
edges between $X_{t_{p-1}}^{i}$ and $X_{t_p}^{i}$ in~$G$. Clearly, $Z$ is an
edge-separator of $G$ with at most $2r-2$ edges. Let $T'_{i}$ be the
subtree of $T_{i}$ that we obtain if we remove the descendants of $t_p$ that are distinct from this vertex
and any vertex that is not $t_{0}$ or a descendant of $t_1$. Let $Y =
\bigcup_{t\in V(T'_{i})\setminus\{t_0,t_p\}} X_{t}^{i}$.  In other words,
$Y$ consists of the vertices in the bags of $T'_{i}$ excluding
$X_{t_{0}}^{i}$ and $X_{t_{p}}^{i}$. Obviously, $N_{G}(Y) = X_{t_0}\cup X_{t_p}$.

We will now construct a rooted tree-partition ${\cal F}=({\cal X_{\cal
F}},T_{\cal F},r_{\cal F})$ of $G[Y \cup N_{G}(Y)]$ of width at most
$2r-2$ and such that $|V(T_{\cal F})|>w$. Let $T_{\cal F}$ be the tree obtained from $T'_{i}$ by identifying, for every $j \in \intv{0}{\lfloor
(p-1)/2\rfloor}$, the vertex $t_{j}$ with the vertex $t_{p-j}$. If
multiple edges are created during this identification, we replace them
with simple ones. We also delete loops that may be created. Let us define the elements of ${\cal X^{\cal F}}=\{X^{\cal F}_{t}\}_{t\in
V(T_{F})}$ as follows. If $t\in V(T_F)$ is the result of the identification of
$t_{j}$ and $t_{p-j}$ for some $j \in \intv{0}{\lfloor
(p-1)/2\rfloor}$, then we set  $X_{t}^{\cal F} = X_{t_{j}}\cup
X_{t_{p-j}}$. On the other hand, if $t\in V(T_F)$ is a vertex of
$T_{i}'$ that has not been identified with some other vertex, then
$X_{t}^{\cal F}=X_{t}$.  The construction of ${\cal F}$ is completed by setting
$r_{\cal F}$ to be the result of the identification of $t_0$ and
$t_p$, the endpoints of~$P$.

It is easy to verify that ${\cal F}$ is a rooted tree-partition of
$G[Y \cup N_{G}(Y)]$ of width at most $2r-2$.  Notice also that the
identification of the antipodal vertices of the path $P$
creates a path in $T_{\cal F}$ of length $\lfloor (p-1)/2\rfloor$.  This
implies that the extension of ${\cal F}$ is at least $\lfloor
(p-1)/2\rfloor\geq w+1$. Besides, all the operations performed to
construct $\cal F$ can be implemented in $O_{r}(m)$ steps. This completes the proof.
\end{proof}

We conclude this section with two easy lemmata related
to~\hyperref[par:ports]{ports and frontiers}.
%% vertex and node frontier
\begin{lemma}
\label{fnorks}
Let $G$ be a graph, let ${\cal R}=\{ R_1, \ldots, R_l\}$ be a
$d$-grouped partition of $G$, and let $S=\{s_{1},\ldots,s_{l}\}$ be a
set of centers of ${\cal R}$. For every $i \in \intv{1}{l}$, let ${\cal D}_{i}=({\cal X}_{i}, T_{i},
r_{i})$ be the distance-decomposition with origin~$s_i$ of the
graph~$G[R_{i}]$, and let $N_{i}$ be the node-frontier of $T_i$.
Then, for every $i\in\intv{1}{l}$, there are at least $|N_{i}|$ ports in~$T_{i}$.
\end{lemma}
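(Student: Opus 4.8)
The plan is to build an injection $\pi$ from the node-frontier $N_i$ into the set of ports of $T_i$. Fix $i\in\intv{1}{l}$ and recall that, by definition, $N_i\subseteq I_i^{d-1}$ and every $t\in N_i$ contains a vertex $v_t\in F_i\cap X^i_t$. By the definition of $F_i$, there is a path $P_t=p_0p_1\cdots p_\ell$ in $G$ with $p_0=v_t$, $p_\ell\in V(G)\setminus R_i$, and $p_1,\dots,p_{\ell-1}\in V_i^{\geq d}$ (in particular $\ell\geq1$, since $v_t\in R_i$ while $p_\ell\notin R_i$). Let $q_t=p_{\ell-1}$; this is precisely the last vertex of $P_t$ lying in $R_i$, and it equals $v_t$ when $\ell=1$. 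Let $\pi(t)$ be the index of the bag of $\mathcal{D}_i$ containing $q_t$. Since $q_t$ is adjacent in $G$ to $p_\ell\notin R_i$, and since $q_t\in X^i_t$ with $t\in I_i^{d-1}$ when $\ell=1$ while $q_t\in V_i^{\geq d}$ when $\ell\geq2$, in both cases $\pi(t)\in I_i^{\geq d-1}$ and $X^i_{\pi(t)}$ contains a vertex adjacent to $V(G)\setminus R_i$; hence $\pi(t)$ is a port of $T_i$.

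It remains to prove that $\pi$ is injective, and the only step with genuine content — the one that uses the structure of a distance-decomposition — is the following claim: if $\ell\geq2$, then $\pi(t)$ is a descendant of a child of $t$ in $(T_i,r_i)$. Indeed, $p_1$ is adjacent to $v_t\in X^i_t$, so the rooted tree-partition property forces the bag of $p_1$ to be $t$, the parent of $t$, or a child of $t$; as $p_1\in V_i^{\geq d}$ its bag lies at depth at least $d$ in $T_i$, so it must be a child $t_1$ of $t$. An easy induction along $p_1,p_2,\dots,p_{\ell-1}$ — using at each step that consecutive vertices of $P_t$ lie in the same bag or in adjacent bags, together with the fact that all these vertices lie in $V_i^{\geq d}$ and hence never in the bag $t$ (at depth $d-1$) — shows that each $p_j$ ($1\leq j\leq \ell-1$) lies in a bag that is a descendant of $t_1$. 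In particular $q_t=p_{\ell-1}$ does, which proves the claim. Here items \itemref{it:dist} and~\itemref{it:conn} of the definition of a distance-decomposition are what guarantee that the depths behave as stated.

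Granted the claim, injectivity follows by case analysis. Two distinct $t,t'\in N_i$ both lie in $I_i^{d-1}$, hence are incomparable in $(T_i,r_i)$, so the sets of descendants of children of $t$ and of children of $t'$ are disjoint. For $t$, either $\ell=1$ and then $\pi(t)=t\in I_i^{d-1}$, or $\ell\geq2$ and then, by the claim, $\pi(t)$ is a descendant of a child of $t$ and therefore lies at depth at least $d$, i.e.\ $\pi(t)\in I_i^{\geq d}$; the same dichotomy applies to $t'$. Comparing $\pi(t)$ and $\pi(t')$ in each resulting case — both ``short'' ($\pi(t)=t\neq t'=\pi(t')$), both ``long'' (disjoint descendant sets of children), or one short and one long (different depths) — gives $\pi(t)\neq\pi(t')$ in all cases. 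Thus $\pi$ is injective, so $T_i$ has at least $|\pi(N_i)|=|N_i|$ ports. The main obstacle is the claim in the second paragraph; the rest is bookkeeping.
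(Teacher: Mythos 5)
Your proof is correct and follows essentially the same route as the paper: you map each node-frontier vertex $t$ to the bag of the last vertex of its witnessing path that lies in $R_i$, check that this bag is a port and a descendant of $t$, and deduce injectivity from the fact that distinct elements of $N_i$ all lie at depth $d-1$ and are therefore incomparable. The only difference is that you spell out (via the induction along the path) the descendant claim that the paper states without detail.
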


\begin{proof}
Let $i \in \intv{1}{l}$. We will show that every vertex in the node-frontier of $T_{i}$ has a descendant which is a port.
For every vertex $t \in N_{i} \subseteq V(T_{i})$, there is, by
definition, a path from $t$ to a vertex in $G \setminus R_i$, the
internal vertices of which belong to $V_i^{\geq d}$.
Let $v$ be the last vertex of this path (starting from $t$) which
belongs to $R_i$ and let $t'\in V(T)$ be the vertex such that $v \in
X^i_t$. Then $t'$ is a port of~$T_i$.
Observe that $t'$ cannot be the descendant of any other vertex of
$N_i$. Therefore there are at least $|N_i|$ ports in~$T_i$.
\end{proof}
 
\begin{corollary}
\label{ping}
Let $G$ be a graph, let ${\cal R}=\{ R_1, \ldots, R_l\}$ be a
$d$-grouped partition of $G$, and let $S=\{s_{1},\ldots,s_{l}\}$ be a set
of centers of ${\cal R}$. For every $i\in \intv{1}{l}$, let ${\cal D}_{i}=({\cal X}_{i}, T_{i},
r_{i})$ be the distance-decomposition with origin~$s_i$ of the
graph~$G[R_{i}]$, and let $N_{i}$ be the node-frontier of~$T_i$. If for some integer $k$, every
$N_{i}$-unimportant path in $T_{i}$ has length at most $k$, then
$T_{i}$ contains at least $2^{d/k}$ ports.\qed
\end{corollary}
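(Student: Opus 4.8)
The plan is to derive this as an immediate consequence of the two preceding results, \cref{wt5lokp} (the combinatorial lemma on $N$-unimportant paths) and \cref{fnorks} (which says there are at least $|N_i|$ ports in $T_i$). First I would invoke \cref{fnorks} to get that the number of ports of $T_i$ is at least $|N_i|$, so it suffices to show $|N_i| \geq 2^{d/k}$. To apply \cref{wt5lokp} to the rooted tree $(T_i, r_i)$ with the vertex set $N_i$, I need to check its hypotheses: that every $N_i$-unimportant path in $T_i$ has length at most $k$ (this is exactly the hypothesis of the corollary), and that every vertex of $N_i$ is at distance at least $d$ from the root $r_i$. The latter holds because, by definition, the node-frontier $N_i$ consists of those $t \in V(T_i)$ with $F_i \cap X_t \neq \emptyset$, and the vertex-frontier $F_i \subseteq V_i^{d-1}$; by item~\itemref{it:dist} of the definition of a distance-decomposition, a vertex lying in a bag $X_t^i$ has distance to $s_i$ equal to ${\bf dist}_{T_i}(t, r_i)$, so every $t \in N_i$ satisfies ${\bf dist}_{T_i}(t, r_i) = d-1 \geq d$. (One should double-check the index convention here — whether the frontier sits at level $d-1$ or $d$ — but in either case it is at distance at least $d-1$, and if the intended bound is $2^{(d-1)/k}$ the statement of \cref{wt5lokp} applies verbatim with $d$ replaced by $d-1$; I would align the constants accordingly.) I also need the mild condition from \cref{wt5lokp} that no vertex of $N_i$ is a descendant of another, which follows from the argument in the proof of \cref{fnorks}: distinct frontier nodes have distinct port-descendants, and more directly, if $t, t'\in N_i$ with $t'$ a descendant of $t$ then the frontier witness path for $t'$ would already pass through a port dominated by $t$ — but in fact it is cleanest to observe that all nodes of $N_i$ lie at the same height $d-1$ from the root, hence none is a descendant of another.

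With the hypotheses verified, \cref{wt5lokp} yields $|N_i| \geq 2^{d/k}$ (or $2^{(d-1)/k}$ under the other convention), and combining with \cref{fnorks} gives that $T_i$ has at least $2^{d/k}$ ports, as claimed. The main (and only) subtlety I anticipate is purely bookkeeping: making sure the "distance at least $d$" clause of \cref{wt5lokp} is met exactly, given that the vertex-frontier is defined to live in $V_i^{d-1}$; depending on how the exponent is stated in the corollary this either matches directly or costs a harmless additive constant in the exponent that can be absorbed. No new ideas are needed beyond chaining the two prior results.
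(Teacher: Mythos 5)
Your proof is correct and follows essentially the same route as the paper, which likewise reduces the claim to showing $|N_i|\geq 2^{d/k}$ via \cref{fnorks} and then applies \cref{wt5lokp} to $(T_i,s_i)$, $N_i$, and $k$. The off-by-one you flag (frontier nodes sit at depth exactly $d-1$, so the argument literally yields $2^{(d-1)/k}$ ports) is a genuine but harmless imprecision in the statement, and indeed the paper only ever invokes the bound in the form $2^{(d-1)/(2w+1)}$ in the proof of \cref{rk6ter}.
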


\begin{proof}
Let $i\in \intv{1}{l}$. From \cref{fnorks}, it is enough to prove that 
$|N_{i}|\geq 2^{d/k}$. Then the result follows by applying 
\cref{wt5lokp} for $(T_{i}, s_{i})$, $d$, $N_{i}$, and~$k$.
\end{proof}

\subsection{Proof of \texorpdfstring{\cref{rk6ter}}{the second intermediate theorem}}
\label{sec:proofth5}
 
\begin{proof}
Let $d=\frac{z-r}{4r}$. According to \cref{k5hfyeu}, we can
construct in $O(m)$ steps a $d$-grouped partition ${\cal R}=\{ R_1,
\ldots, R_l\}$ of $V(G)$, with a set of centers
$S=\{s_{1},\ldots,s_{l}\}$, and also, for every $i\in\intv{1}{l}$, the
distance-decompositions ${\cal D}_{i}=({\cal X}_{i},
T_{i}, r_{i})$ with origins $s_{i}$ of the graphs~$G[R_{i}]$. For every $i \in \intv{1}{l}$, we use the
notation ${\cal X}_{i}=\{X^{i}_{t}\}_{t\in V(T_{i})}$ and denote by
$N_{i}$ the node-frontiers of~$T_{i}$.

By applying the algorithm of \cref{spoof}, in $O_{r}(m)$ steps, we
either find a $\theta_{r}$-model in $G$ with at most $z=4\cdot r\cdot
d+r$ edges or we know that for every two distinct $i,j\in \intv{1}{l}$ there
are at most $r-1$ edges of $G$ with one endpoint in $R_i$ and one in
$R_j$.

Similarly, by applying the algorithm of \cref{blurp}, in $O_r(m)$ steps we
either find a $\theta_r$-model in $G$ with at most $2\cdot r\cdot d\leq z$ edges (in which case we are done) or we know
that for every $i\in\intv{1}{l}$ and every $t\in V(T_i)\setminus{s_i}$, it holds that $|E^{(i)}|\leq r-1$
%$t\in V(T_{i})$, the bag $X_{t}^{i}$ contains at most $r-1$~vertices.

In the second case, by using the algorithm of \cref{l:pong}, in $O_r(m)$ steps we either
find a connected $(2r-2)$-edge-protrusion with extension more than $w$, or we know
that for every $i\in \intv{1}{l}$, all $N_{i}$-unimportant paths of $T_{i}$ have length at most~$2w+1$.

We may now assume that none of the above algorithms provided a
$\theta_r$-model with $z$ edges, or a $(2r-2)$-edge-protrusion.

From \cref{ping}, for every $i \in \intv{1}{l}$ the tree $T_i$
contains at least $2^{\frac{d-1}{2w+1}}=2^{\frac{z-5r}{4r\cdot (2w+1)}}$
ports, which by definition means that
%for every $h\in \intv{}{}$,
there are at least $2^{\frac{z-5r}{4r\cdot (2w+1)}}$ edges in $G$ with one endpoint in $R_{i}$ 
and the other in $V(G)\setminus R_{i}$.
By \cref{spoof}, for every distinct integers $i,j\in \intv{1}{l}$ there are at
most $r-1$ edges with one endpoint in $R_{i}$ and the other in~$R_{j}$.
As a consequence of the two previous implications, for every $i\in \intv{1}{l}$ there
is a set $Z_{i}\subseteq \intv{1}{l}\setminus\{i\}$, where $|Z_{i}|\geq
\frac{1}{r-1}2^{\frac{z-5r}{4r(2w+1)}}$,
such that for every $j\in Z_{i}$ there exists an edge with one endpoint in $R_{i}$ and the other in $R_{j}$.
Consequently, if we now contract all edges in $G[R_{i}]$ for every $i \in \intv{1}{l}$, the resulting graph $H$ 
is a minor of $G$ of minimum degree at least
$\frac{1}{r-1}2^{\frac{z-5r}{4r(2w+1)}}$. Therefore, we output $G$,
which is an $H$-model, as required in this case.
\end{proof}

\subsection{Proof of \texorpdfstring{\cref{sl0tr}}{the first intermediate theorem}}
\label{sec:proofth4}

\begin{proof}
The proof is quite similar to the one of \cref{rk6ter}.
If $G$ contains a vertex $v$ of degree less than $\delta$, we can easily find it in $O(m)$ steps. Hence, from now on we can assume that every vertex has degree at least $\delta$.

Let $d=\frac{z-r}{4r}$. From \cref{k5hfyeu}, in $O(m)$ steps, we can construct a $d$-grouped 
partition  ${\cal R}=\{ R_1, \dots, R_l\}$ of $G$, with a 
set of centers $S=\{s_{1},\ldots,s_{l}\}$, and also the
distance-decomposition ${\cal D}_{i}=({\cal X}_{i}, T_{i}, r_{i})$ with origins $s_i$ of
the graphs $G[R_{i}]$, for every~$i\in \intv{1}{l}$.
We use again the notation ${\cal  X}_{i}=\{X^{i}_{t}\}_{t\in V(T_{i})}$.

As in the proof of \cref{rk6ter}, in $O_{r}(m)$ steps, we can either
find a $\theta_{r}$-model in $G$ with at most $z=4\cdot r\cdot
d+r$ edges or we know that for every distinct integers $i,j\in[l]$ there are at most 
$r-1$ edges of $G$ with one endpoint in $R_{i}$ and one in~$R_{j}$
(\textit{cf.} \cref{spoof}).

Using \cref{corfkior}, we can in $O_{r}(m)$ steps either find a
$\theta_r$-model in $G$ with at most $z$ edges or we know that every bag
of ${\cal D}_{i}$ has less than $r$ vertices, for every~$i \in  \intv{1}{l}$.
Let $i \in \intv{1}{l}$ and let $u \in R_i$ be a vertex at distance less
than $d$ from $s_i$.
As $u$ has degree at least $3r$, it must have
neighbors in at least $3$ different bags of ${\cal D}_i$, apart from
the one containing it.
This means that every vertex in $T_{i}$ of distance less than $d$ from $r_{i}$ has degree at least $\lfloor \frac{\delta}{r-1} \rfloor \geq 3$ and
therefore $T_{i}$ has at  least ${\lfloor \frac{\delta}{r-1}-1 \rfloor }^{d}$ leaves.
Notice also that if $t$ is a leaf of $T_{i}$, then each vertex in $X_{t}^{i}$ can have at most $r-1$ neighbors 
in $X_{{\bf p}(t)}^{i}$  and at most $r-2$ neighbors in $X_{t}^{i}$.
Therefore there are at least $\delta-(r-1)-(r-2)=\delta-2r+3$ edges in $G$ with 
one endpoint in $X_{t}^{i}$ and the other in $V(G)\setminus R_{i}$.
This means that for every $i\in \intv{1}{l}$ there are at least
$(\delta-2r+3)\cdot {\lfloor \frac{\delta}{r-1}-1 \rfloor }^{d}$ edges
with one endpoint in $R_{i}$ and the other $V(G)\setminus R_{i}$.

Similarly to the proof of \cref{rk6ter}, we deduce that, 
for each $i\in\intv{1}{l}$, there is a set $Z_{i}\subseteq \intv{1}{l}\setminus\{i\}$ where $|Z_{i}|\geq \frac{\delta-2r+3}{r-1}\cdot {\lfloor \frac{\delta}{r-1}-1 \rfloor }^{d}$
such that, for every $j\in Z_{i}$, there exists an edge with one endpoint in $R_{i}$ and the other in $R_{j}$. This 
implies the existence of an $H$-model in $G$ for some $H$ with
$\delta(H)\geq \frac{\delta-2r+3}{r-1}\cdot {\lfloor
  \frac{\delta}{r-1}-1 \rfloor }^{\frac{z-r}{4r}}$. We then output
$G$, which, in this case, is an $H$-model.
\end{proof}

 \section{Excluding \texorpdfstring{$k$}{k} copies of \texorpdfstring{$\theta_{r}$}{a pumpkin} as a minor}
\label{sec:last}

This section is devoted to the proof of the following theorem.

\begin{theorem}
\label{s:new}
For every  graph $G$, $r\geq 2$, and 
$k\geq 1$, if $\tw(G) \geq 2^{6r}\cdot k\cdot  \log (k+1)$, then $G$ contains  $k\cdot \theta_{r}$ as a minor.
\end{theorem}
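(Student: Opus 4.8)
The plan is to prove the contrapositive by a standard induction-type argument on $k$, combined with \cref{rk6ter} applied to a suitable subgraph of $G$. Assume $\tw(G) \geq 2^{6r}\cdot k\cdot \log(k+1)$ and $G$ does not contain $k\cdot\theta_r$ as a minor. Since treewidth is minor-monotone and the hypothesis concerns the disjoint union of copies of $\theta_r$, I would first pass to a vertex-minimal subgraph $G'$ of $G$ that still has treewidth at least $\tau := 2^{6r}\cdot k\cdot\log(k+1)$; such a $G'$ is connected (otherwise take the component realizing the treewidth), is edge-dense enough that $m(G') \geq \tau$, and has $\theta_r$-girth at least some value $z$ which I will want to take roughly $z \approx 2^{6r}\cdot\log(k+1)\cdot \mathrm{(small\ stuff)}$ — the point being that a small $\theta_r$-model, once deleted, should drop the treewidth only by a controlled amount, contradicting minimality. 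Here is the crux of the inductive step: if $G'$ contains a $\theta_r$-model $M$ with few edges, then $G' - E(M)$ (or $G'$ minus the vertices of $M$) should still have large treewidth, and by induction it contains $(k-1)\cdot\theta_r$ as a minor disjoint from $M$, giving $k\cdot\theta_r$ in $G'$. So the $\theta_r$-girth of the minimal counterexample must be at least the threshold where \cref{rk6ter} kicks in.

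Concretely, I would set parameters so that \cref{rk6ter} is applicable with $w$ chosen to make the edge-protrusion outcome harmless: a $(2r-2)$-edge-protrusion $Y$ with large extension $w$ gives, via its width-$(2r-2)$ rooted tree-partition, a separation of $G'$ of order at most $2r-2$ splitting off a part that itself has a tree-partition of bounded width and hence bounded treewidth — so deleting it (or rather recursing into the two sides) contradicts minimality of $G'$ or lets us find the copies of $\theta_r$ on the large side. The third outcome of \cref{rk6ter} produces an $H$-model with $\delta(H) \geq \frac{1}{r-1}2^{(z-5r)/(4r(2w+1))}$; feeding this into \cref{kosto} yields a clique minor $K_t$ with $t$ exponential in $z/(rw)$. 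Choosing $z$ and $w$ of order $2^{O(r)}\log(k+1)$ and $O(1)$ respectively, this forces $t \geq$ (something like) $k\cdot\mathrm{poly}$, and a $K_t$-minor with $t$ large certainly contains $k\cdot\theta_r$ as a minor (each $\theta_r$ needs only $r+1$ branch vertices arranged as two adjacent bags, and $t$ branch vertices accommodate $k$ disjoint such pairs once $t \geq k(r+1)$ — in fact $t \geq 2k$ already suffices since in a clique minor any two branch sets are adjacent, so two branch vertices plus multiplicity from the clique structure give a $\theta_r$; more carefully one uses $t \geq k(r+1)$). This is the direct contradiction that finishes the proof.

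The main obstacle I anticipate is bookkeeping the interaction between the three cases in a way that actually yields an induction: I must ensure that in the protrusion case and the small-model case, the "leftover" graph genuinely retains treewidth at least $2^{6r}\cdot(k-1)\cdot\log k$, which requires quantifying how much a $\theta_r$-model of $\leq z$ edges or a bounded-width edge-protrusion can contribute to treewidth. For the $\theta_r$-model: deleting $O(z)$ edges drops treewidth by at most $O(z)$, but that is not good enough directly — instead I should delete the $O(z)$ \emph{vertices} of the model and argue (as in Birmelé–Bondy–Reed-type arguments, cf.\ the quadratic bound in \cite{Birmele06, FominLMPS13}) that if removing the vertex set of every small $\theta_r$-model kept treewidth below threshold then $G'$ would have a tree-decomposition of width $\tau + O(z) \leq $ new threshold, contradiction; one iterates this removal $k$ times, each time extracting one copy of $\theta_r$. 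The calibration $2^{6r}\cdot k\cdot\log(k+1)$ versus the exponent $(z-5r)/(4r(2w+1))$ in \cref{rk6ter} is where the constant $2^{6r}$ and the $\log(k+1)$ factor must be made to match, so the final step is choosing $z = \Theta(2^{6r}\log(k+1))$, $w = \Theta(1)$, and checking $z \geq 168$-type side conditions hold for all $k \geq 1$, $r \geq 2$; I expect this to go through cleanly but tediously. The tightness claim (that $o(k\log k)$ is impossible) is, as the introduction indicates, handled separately via the Geelen–Gerards–Robertson–Whittle excluded-minors-for-branch-width result \cite{GeelenGR03onth} and is not part of this theorem's proof.
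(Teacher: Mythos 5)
Your inductive skeleton (extract a minimum-size $\theta_r$-model, delete its at most $O_r(\log k)$ vertices, lose only that much treewidth, recurse at $k-1$; handle the large-minimum-degree outcome by finding $k$ disjoint $\theta_r$'s inside a big minor) matches the paper, and those parts would go through — though the paper gets $k\cdot\theta_r$ from minimum degree $\geq k(r+1)-1$ directly via Stiebitz's partition theorem (\cref{l:vpackex}) rather than detouring through \cref{kosto} and a clique minor, and your aside that $t\geq 2k$ would suffice for the clique is wrong (two branch sets of a clique minor only guarantee one edge between them), though your fallback $t\geq k(r+1)$ is fine. The genuine gap is in how you neutralize the second outcome of \cref{rk6ter}, the connected $(2r-2)$-edge-protrusion $Y$ of large extension. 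You propose to take a vertex-minimal subgraph $G'$ with $\tw(G')\geq\tau$ and argue that splitting off $Y$ ``contradicts minimality or lets us recurse into the large side.'' Neither branch closes: $G'[Y\cup N_{G'}(Y)]$ has treewidth $O(r)$ and boundary $\leq 2r-2$, so deleting $Y$ only guarantees $\tw(G'\setminus Y)\geq \tau-(2r-2)$; the protrusion may genuinely contribute those last few units of treewidth (already for small parameters, a long cycle is vertex-minimal of treewidth $\geq 2$ yet full of huge bounded-width protrusions), so vertex-minimality is not contradicted. And recursing into $G'\setminus Y$ loses an additive $\Theta(r)$ in treewidth per stripping round with no bound on the number of rounds, so you cannot maintain the threshold $2^{6r}k\log(k+1)$ needed to keep $z=\Theta_r(\log k)$ and $w=O_r(1)$ in \cref{rk6ter}; without a \emph{constant} bound on $w$ the exponent $(z-5r)/(4r(2w+1))$ does not reach $k(r+1)-1$.

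What the paper does here is essentially different and is the missing idea: instead of a minimal subgraph, it passes to a minor \emph{obstruction} $H$ for branchwidth at most $f(r)k\log(k+1)-1$ (using \cref{grminx} to translate treewidth to branchwidth), moves to the cycle matroid via \cref{prop1} and \cref{y7ury5to0o}, and invokes the Geelen--Gerards--Robertson--Whittle bound (\cref{prop2}) to conclude that every order-$(2r-2)$ cut of $H$ has one side with at most $g(2r-2)$ edges; hence \emph{every} $(2r-2)$-edge-protrusion of $H$ has extension at most the constant $g(2r-2)$, which is exactly the bound $w=O_r(1)$ your calibration needs (the arithmetic is \cref{d:dsfdf}). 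Note that this is a bound on the \emph{size} of one side of the cut, much stronger than any treewidth accounting, and it is used in the upper-bound proof itself — your closing remark that the GGRW result only enters the separate tightness statement is a misreading; the tightness example is instead built from Ramanujan graphs. Without this (or some substitute argument that genuinely forbids large bounded-width edge-protrusions in the graph you feed to \cref{rk6ter}), the proposal does not yield \cref{s:new}.
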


For the proof, we need to introduce some definitions and related results.

\subsection{Preliminaries}
\label{sec:def2}

Let $G$ be a graph and $G_1,G_2$ two non-empty subgraphs of~$G$.
We say that $(G_1, G_2)$ is a \emph{separation} of $G$ if:
\begin{itemize}
\item $V(G_1)\cup V(G_2) = V(G)$; and
\item $(E(G_1), E(G_2))$ is a partition of $E(G)$.
\end{itemize}

Let $G$ be a graph. Given a set $E\subseteq E(G)$, we define $V_{E}$ as the set of all endpoints of the edges in $E$.
Given a partition $(E_{1},E_{2})$ of $E(G)$ we 
define $\delta(E_1, E_2)=|V_{E_1}\cap V_{E_2}|$.

A \emph{cut} $C=(X,Y)$ of $G$ is a partition of $V(G)$ into two subsets $X$ and $Y$.
We define the \emph{cut-set} of $C$ as $E_C = \{ \{x,y\} \in E(G)\ | \ 
x\in X\ \text{and}\ y \in Y\}$ and call $|E_C|$ the \emph{order} of the cut.
Also, given a graph $G$, we denote by $\sigma(G)$ the number of connected components of $G$.

\paragraph{The branchwidth of a graph.}
  A \emph{branch-decomposition} of a graph $G$ is a pair $(T,\tau)$ where $T$ is a ternary tree and $\tau$ a bijection from the edges of $G$ to the leaves of $T$.
Deleting any edge $e$ of $T$ partitions the leaves of $T$ into two sets, and thus the edges of $G$ into two subsets $E_1^e$ and $E_2^e$.
 The \emph{width} of a branch-decomposition $(T,\tau)$ is equal to~$\max_{e\in E(T)}\{ \delta(E_1^e,E_2^e) \}$. 
 The \emph{branchwidth} of a graph $G$, denoted $\bw(G)$, is defined as the minimum width over all branch-decompositions of $G$.

\paragraph{The branchwidth of a matroid.}
We assume that the reader is familiar with the basic notions of matroid theory.
We will use the standard notation from Oxley's book~\cite{Ox92}.
The branchwidth of a matroid is defined very similarly to that of a graph.
Let ${\cal M}$ be a matroid with finite ground set $E({\cal M})$ and rank function $r$.
The \emph{order} of a non-trivial partition $(E_1,E_2)$ of $E({\cal M})$ is defined as $\lambda(E_1,E_2) =r(E_1) +r(E_2)-r(E)+1$.
A \emph{branch-decomposition} of a matroid ${\cal M}$ is a pair $(T,\mu)$ where $T$ is a ternary tree and $\mu$ is a bijection from the elements of $E({\cal M})$ to the leaves of $T$.
Deleting any edge $e$ of $T$ partitions the leaves of $T$ into two sets, and thus the elements of $E({\cal M})$ into two subsets $E_1^e$ and $E_2^e$.
The \emph{width} of a branch-decomposition $(T,\mu)$ is equal to~$\max_{e\in E(T)}\{ \lambda(E_1^e,E_2^e) \}$.
 The \emph{branchwidth} of a matroid ${\cal M}$, denoted $\bw({\cal M})$, is again defined as the minimum width over all branch-decompositions of ${\cal M}$.
The \emph{cycle matroid of a graph $G$} denoted ${\cal M}_G$, has ground set $E({\cal M}_G)=E(G)$ and the cycles of $G$ as the circuits of ${\cal M}_G$.
Let $G$ be a graph, ${\cal M}_G$ its cycle matroid and $(G_1,G_2)$ a separation of $G$. 
Then clearly $(E(G_1),E(G_2))$ is a partition of $E({\cal M}_G)$, but to avoid confusion we will henceforth denote it $(E_1,E_2)$
 and we will call it \emph{the partition of ${\cal M}_G$ that corresponds to the separation $(G_1,G_2)$ of $G$}.
Observe that the order of this partition is: 
 \begin{equation}\label{matr}
    \lambda(E_1,E_2) = \delta(E(G_1),E(G_2)) - \sigma(G_1) - \sigma(G_2) + \sigma(G) +1.\tag{$\star$}
  \end{equation}

\paragraph{Minor obstructions.} Let $\cal{G}$ be a graph class.
We denote by ${\bf obs}({\cal G})$  the set of all minor-minimal
graphs $H$ such that $H\notin \cal{G}$ and we will call it \emph{the
  minor obstruction set for $\cal{G}$}. Clearly, if $\cal{G}$ is
closed under minors, the minor obstruction set for $\cal{G}$ provides
a complete characterization for $\cal{G}$: a graph $G$ belongs in
$\cal{G}$ if and only if none of the graphs in ${\bf obs}({\cal G})$
is a minor of~$G$.

Given a class of matroids ${\bf M}$,  \emph{the minor obstruction set
  for ${\bf M}$},  denoted by ${\bf obs}({\bf M})$, is defined very
similarly to its graph-counterpart: it is simply the set of all
minor-minimal matroids ${\cal M}$ such that ${\cal M}\notin {\bf M}$.

We will need the following results.
\begin{proposition}[\!\!\protect{\cite[Theorem 5.1]{RS91}}]
\label{grminx}
Let $G$ be a graph of branchwidth at least 2. Then,
$\bw(G)\leq \tw(G)+1\leq\lfloor\frac{3}{2}\bw(G)\rfloor.$
\end{proposition}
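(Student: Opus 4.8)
The plan is to prove the two inequalities of Proposition~\ref{grminx} separately: the left one, $\bw(G)\le\tw(G)+1$, by converting a tree-decomposition of $G$ into a branch-decomposition of width at most one more; and the right one, $\tw(G)+1\le\lfloor\frac32\bw(G)\rfloor$, by converting a branch-decomposition into a tree-decomposition whose bags are bounded by a double-counting argument. Throughout I would assume $G$ has no isolated vertices (these can be attached afterwards as private pendant bags, which changes neither $\tw$ nor $\bw$), and use the hypothesis $\bw(G)\ge 2$ only to guarantee $\tw(G)\ge 1$ (if $\tw(G)=0$ then $G$ is edgeless and $\bw(G)=0$) and thereby to brush aside the degenerate cases.

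For the right inequality I would take a branch-decomposition $(T,\tau)$ of $G$ of width $\beta=\bw(G)$, with $T$ ternary and its leaves in bijection with $E(G)$. To a leaf $\ell=\tau^{-1}(e)$ assign the bag $X_\ell=V_{\{e\}}$, the two endpoints of $e$; to an internal node $t$, whose three incident edges induce the partition $(A_1,A_2,A_3)$ of $E(G)$ into the branches of $T$ hanging at $t$, assign $X_t$ to be the set of vertices incident to edges of at least two of $A_1,A_2,A_3$. The bound on $|X_t|$ comes from counting the quantity $\sum_{i=1}^3\delta(A_i,E(G)\setminus A_i)$: a vertex of $X_t$ lying in exactly two (respectively three) of the $A_i$ contributes $2$ (respectively $3$) to this sum, so $2|X_t|\le\sum_{i=1}^3\delta(A_i,E(G)\setminus A_i)\le 3\beta$, where the last inequality holds because $(A_i,E(G)\setminus A_i)$ is the partition induced by an edge of $T$; hence $|X_t|\le\lfloor\frac32\beta\rfloor$. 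It then remains to verify the tree-decomposition axioms: condition~(i) holds since each edge is contained in its leaf-bag, condition~(iii) since every non-isolated vertex lies in a leaf-bag, and for condition~(ii) I would check that $\{t\in V(T):v\in X_t\}$ equals the vertex set of the minimal subtree of $T$ spanning the leaves $\{\tau^{-1}(e):e\ni v\}$ — at an internal node $t$, the vertex $v$ meets two of the $A_i$ precisely when two such leaves lie in different branches at $t$ — and this subtree is connected. Since every bag has size at most $\lfloor\frac32\beta\rfloor$ and $\lfloor\frac32\beta\rfloor\ge2$, the width is at most $\lfloor\frac32\beta\rfloor-1$, giving $\tw(G)+1\le\lfloor\frac32\bw(G)\rfloor$.

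For the left inequality I would start from a tree-decomposition $(T,\mathcal X)$ of $G$ of width $w=\tw(G)$ and fix, for each $e\in E(G)$, a node $b(e)\in V(T)$ with $e\subseteq X_{b(e)}$. I then build a new tree $T'$ by replacing each node $t$ of $T$ with a small tree (a path/caterpillar) all of whose nodes have degree at most $3$, wired so that it carries the $\deg_T(t)$ links to the neighbours of $t$ and one pendant leaf for each edge $e$ with $b(e)=t$; suppressing the resulting degree-$2$ nodes makes $T'$ ternary with leaf set in bijection with $E(G)$, and $\tau$ sends $e$ to its pendant leaf. To bound the width of $(T',\tau)$, fix an edge $\epsilon$ of $T'$ inducing the partition $(E_1,E_2)$ of $E(G)$. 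If $\epsilon$ is incident to a pendant leaf $\tau^{-1}(e)$ then $(E_1,E_2)=(\{e\},E(G)\setminus\{e\})$ has order at most $2\le w+1$. Otherwise $\epsilon$ lies inside, or between, the gadgets replacing nodes of $T$, and I claim $V_{E_1}\cap V_{E_2}\subseteq X_t$ for the original node $t$ associated with $\epsilon$: if $v$ is incident to $e_1\in E_1$ and to $e_2\in E_2$ then $v\in X_{b(e_1)}\cap X_{b(e_2)}$, and by condition~(ii) of $(T,\mathcal X)$ the vertex $v$ lies in every bag along the path in $T$ from $b(e_1)$ to $b(e_2)$, a path that is separated by $t$. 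Hence $\delta(E_1,E_2)\le|X_t|\le w+1$, so $(T',\tau)$ has width at most $w+1$ and $\bw(G)\le\tw(G)+1$.

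The only genuinely fussy part, I expect, will be the node-expansion step of the left inequality: one must pin down the gadget replacing each node $t$ (roughly a path on $\deg_T(t)+|b^{-1}(t)|$ vertices) and confirm both that the degree-$3$ constraint and the leaf-to-edge bijection are met and that the argument ``$v\in V_{E_1}\cap V_{E_2}$ forces $v\in X_t$'' covers edges $\epsilon$ sitting strictly inside a single gadget as well as edges joining two gadgets — in every case the two witnessing edges $e_1,e_2$ are separated in $T$ by the node $t$, which is what makes the claim go through. By contrast the conceptual content is light: the vertex double-count producing the factor $3/2$, and the observation that any vertex shared by both sides of an edge of $T'$ must live in one common bag of $T$.
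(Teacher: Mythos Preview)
The paper does not prove Proposition~\ref{grminx}; it simply quotes it as \cite[Theorem~5.1]{RS91} and uses it as a black box, so there is no in-paper proof to compare against. Your argument is correct and is essentially the original Robertson--Seymour proof: the $\tfrac32$ direction via the double-count $2|X_t|\le\sum_{i=1}^3\delta(A_i,E\setminus A_i)\le 3\beta$ at each ternary node, and the reverse direction by expanding each tree-decomposition node into a subcubic gadget carrying one pendant leaf per assigned edge, with the separator bound coming from axiom~(ii). The only points to polish are the ones you already flag: make explicit which original bag $X_t$ controls an edge $\epsilon$ lying strictly inside a gadget (any $t$ whose gadget meets $\epsilon$ works, since the $T$-path between $b(e_1)$ and $b(e_2)$ must pass through that $t$), and note that when $|E(G)|\le 1$ no ternary tree exists, which is precisely why the hypothesis $\bw(G)\ge 2$ is needed.
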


\begin{proposition}[\!\!\cite{BodlaenderLTT97}]
\label{griii6ytnx}
Let $r\in\N_{\geq 1}$ and let $G$ be a graph. If $\bw(G)\geq 2r+1$, then 
$G$ contains a $\theta_{r}$-model. 
\end{proposition}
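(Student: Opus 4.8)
The plan is to prove the contrapositive: if $\theta_r$ is not a minor of $G$ — equivalently, $G$ has no $\theta_r$-model — then $\bw(G)\le 2r$. The engine is an elementary \emph{connected-bipartition lemma}: if $(X,Y)$ is a partition of $V(G)$ with both $G[X]$ and $G[Y]$ connected, then $G$ has at most $r-1$ edges between $X$ and $Y$; indeed a spanning tree of $G[X]$, a spanning tree of $G[Y]$, and any $r$ edges between $X$ and $Y$ together form a subgraph of $G$ that contracts to $\theta_r$. The whole argument then converts this ``every connected bipartition has a small cut'' property into a branch-decomposition of bounded width, by recursively splitting $G$ along connected bipartitions.

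I would run the recursion on $|E(G)|$, with base cases the graphs having at most one edge, stars, and graphs on at most three vertices, all of which have branchwidth at most $2\le 2r$. For the inductive step, let $G$ be connected with no $\theta_r$-model and not a base case. First, $G$ has a connected bipartition $(X,Y)$ with $|X|\ge 2$ and $|Y|\ge 2$: if $G$ has a cut-vertex $v$ then, $G$ not being a star, some component $C$ of $G-v$ has at least two vertices and $(V(C),V(G)\setminus V(C))$ works (here $v$ together with the remaining components induces a connected graph); if $G$ is $2$-connected, pick any edge $vw$, and take $X=\{v,w\}$ if $G-\{v,w\}$ is connected, and otherwise $X=V(C_1)\cup\{v\}$ where $C_1$ is one of the components of $G-\{v,w\}$ (each such component is adjacent to both $v$ and $w$ by $2$-connectivity, so both $G[X]$ and $G[Y]$ are connected and of size $\ge 2$). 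By the connected-bipartition lemma, at most $r-1$ edges cross such a bipartition.

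The core step is to build a branch-decomposition of $G$ of width at most $2r$ by recursively decomposing the two sides of such a bipartition, gluing the two decomposition trees by a new edge, and routing the at most $r-1$ crossing edges as leaves hanging on one side. I expect the main obstacle to be the bookkeeping of cuts in the glued tree: the endpoints of the crossing edges are shared between $X$ and $Y$, so a direct recursion lets the width drift upward, and controlling this drift is exactly what forces the bound $2r$ rather than $r$. To handle it I would carry out the induction on \emph{boundaried} graphs $(G,S)$, where $S\subseteq V(G)$ with $|S|\le r-1$ records the vertices through which $G$ attaches to the outside, and strengthen the statement to: $(G,S)$ admits a branch-decomposition in which every cut has order at most $r-1+|S|\le 2r$, with $S$ localized near the root. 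When splitting $(G,S)$ along a connected bipartition $(X,Y)$ one recurses on $(G[X],S_X)$ and $(G[Y],S_Y)$, where $S_X$ (resp.\ $S_Y$) collects the $X$-endpoints (resp.\ $Y$-endpoints) of the crossing edges together with the part of $S$ inside $X$ (resp.\ $Y$). The delicate point — and the place where the connected-bipartition lemma must be re-proved in a boundaried form, treating the attachment of $G$ to the outside as one of the two connected ``sides'' — is to guarantee that $|S_X|,|S_Y|\le r-1$ is preserved and that both recursive instances have strictly fewer edges. Once this invariant is set up, checking that every cut of the glued branch-decomposition has order at most $2r$ is a routine computation.
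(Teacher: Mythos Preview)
The paper does not prove this proposition; it is quoted from \cite{BodlaenderLTT97} and used as a black box, so there is no in-paper argument to compare your attempt against.

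On its own merits, your connected-bipartition lemma is correct and is the natural starting point. The gap is in the recursion. You want to maintain the invariant $|S|\le r-1$, but when you split $(G,S)$ along a connected bipartition $(X,Y)$ with, say, $S\subseteq Y$, the new boundary on the $Y$-side is $S_Y=S\cup\{\text{$Y$-endpoints of the crossing edges}\}$, which can have size up to $(r-1)+(r-1)=2r-2$. So the invariant fails on one of the two recursive calls, and over successive splits the width drifts without bound --- exactly the phenomenon you flagged but did not resolve. Your proposed fix, a ``boundaried form'' of the bipartition lemma that treats the outside as one of the two connected sides, does not obviously help: if you model the outside by a single vertex $*$ adjacent to all of $S$, the graph $G^+$ so obtained may contain a $\theta_r$-minor even when $G$ does not (take the branch set containing $*$ together with a suitable connected piece of $G$; the $r$ crossing edges can be $r-1$ of the $*$-edges plus a single $G$-edge), so you cannot invoke the lemma in $G^+$. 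You have correctly located the hard step, but as written the argument does not produce a branch-decomposition of width at most~$2r$; turning the connected-bipartition bound into a global width bound genuinely requires an additional idea, not just bookkeeping.
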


\begin{proposition}[\!\!\protect{\cite[Theorem 4]{HM07}}]
\label{prop1}
Let $G$ be a graph that contains a cycle
%has a cycle of length at least 2 
and ${\cal M}_G$ be its cycle matroid. Then, $\bw(G)=\bw({\cal M}_G)$.
\end{proposition}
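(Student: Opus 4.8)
The plan is to use that a branch-decomposition of $G$ and one of ${\cal M}_G$ are literally the same object: a ternary tree $T$ together with a bijection $\tau$ from $E(G)$ to the leaves of $T$; only the width functions differ ($\delta$ for the graph, $\lambda$ for the matroid). Discarding the isolated vertices of $G$ first (they affect neither quantity), each edge $e$ of $T$ induces a separation $(G_1,G_2)$ of $G$, where $G_i$ is the subgraph on the edge set $E_i=E_i^e$, and both $E_1,E_2$ are non-empty since $G$ has at least two edges. By \itemref{matr},
\[
\lambda(E_1,E_2)=\delta(E_1,E_2)+1-c\qquad\text{with }c:=\sigma(G_1)+\sigma(G_2)-\sigma(G).
\]
Two facts drive the argument. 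First, $c\ge 0$: every component of $G$ has an edge, hence contributes at least $1$ to $\sigma(G_1)+\sigma(G_2)$; and moreover $\delta(E_1,E_2)=0$ forces $c=0$ (with no shared vertex, no component of $G$ meets both $G_1$ and $G_2$), whereas $\delta(E_1,E_2)\ge1$ forces $c\ge1$ (a vertex incident to edges of both parts lies in a component contributing at least $2$). Second, I will use the elementary fact that $\bw(G)\ge2$ whenever $G$ has a cycle ($\bw$ is subgraph-monotone and $\bw(\theta_2)=2$).

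For $\bw({\cal M}_G)\le\bw(G)$, take an optimal branch-decomposition of $G$ and reinterpret it as one of ${\cal M}_G$. For each tree-edge $e$: if $\delta(E_1^e,E_2^e)\ge1$ then $c\ge1$, so $\lambda(E_1^e,E_2^e)=\delta(E_1^e,E_2^e)+1-c\le\delta(E_1^e,E_2^e)\le\bw(G)$; and if $\delta(E_1^e,E_2^e)=0$ then $c=0$ and $\lambda(E_1^e,E_2^e)=1\le2\le\bw(G)$. Hence this same decomposition witnesses $\bw({\cal M}_G)\le\bw(G)$.

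The reverse inequality $\bw(G)\le\bw({\cal M}_G)$ is the hard direction, because at an edge with $c\ge2$ one has $\delta=\lambda+c-1>\lambda$, so the same tree need not work: a matroid decomposition may ``split $G$ into many pieces'' along a cut with a large vertex boundary. I would deal with this in two stages. First, reduce to $G$ being $2$-connected: both $\bw(\cdot)$ and $\bw({\cal M}_\cdot)$ equal the maximum of the corresponding quantity over the blocks of $G$ --- for the matroid because a cutvertex turns ${\cal M}_G$ into a direct sum of the block-matroids and a bridge into a coloop --- and since $G$ has a cycle this maximum is attained at a $2$-connected block that has a cycle, so it suffices to treat that block. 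Second, starting from an optimal branch-decomposition of ${\cal M}_G$ with $G$ now $2$-connected, I would iteratively ``clean'' it: while some tree-edge $e$ has $G_i^e$ disconnected, pick a component $D$ of $G_i^e$; since $D$ shares no circuit with the rest of $G_i^e$, the set $E(D)$ is a direct summand of ${\cal M}_G$ restricted to $E(G_i^e)$, i.e. it is split off from $E(G_i^e)\setminus E(D)$ by a matroid separation of order $1$, so the subtree carrying $E(D)$ can be detached and re-inserted at a suitable place in $T$ without raising any $\lambda$-value. A potential such as $\sum_e\bigl(\sigma(G_1^e)+\sigma(G_2^e)\bigr)$ strictly decreases, so the process terminates with every $G_i^e$ connected; then $c=1$ at every $e$, hence $\delta(E_1^e,E_2^e)=\lambda(E_1^e,E_2^e)\le\bw({\cal M}_G)$, giving a branch-decomposition of $G$ of width at most $\bw({\cal M}_G)$. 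The step I expect to be the main obstacle is exactly this surgery --- proving that splitting off an order-$1$ matroid separation can always be realized by a local rearrangement of $(T,\tau)$ that does not increase the width, and identifying where to reattach the detached subtree; this is where the bulk of the case analysis lives.
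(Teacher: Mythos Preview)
The paper does not give its own proof of this proposition: it is quoted verbatim as \cite[Theorem~4]{HM07} and used as a black box in the proof of \cref{s:new}. There is therefore nothing in the paper to compare your argument against.

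On the substance of what you wrote: the inequality $\bw({\cal M}_G)\le\bw(G)$ is handled correctly and completely, via \itemref{matr} and your case split on whether $\delta(E_1^e,E_2^e)$ is zero. For the reverse inequality your reduction to $2$-connected blocks is sound (both branchwidths decompose over blocks, and the cycle-containing block carries the maximum), and the idea of ``cleaning'' an optimal matroid branch-decomposition until every displayed subgraph $G_i^e$ is connected --- so that $c=1$ everywhere and $\delta=\lambda$ --- is the right target. But you yourself flag the surgery step as the main obstacle and do not carry it out: you assert that a component $D$ of a disconnected $G_i^e$ is a matroid direct summand of ${\cal M}_G|E_i^e$, that the corresponding subtree can be detached and reattached ``at a suitable place'' without raising any $\lambda$-value, and that a potential strictly drops, but none of this is verified. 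In particular, identifying \emph{where} to reattach so that no new $\lambda$-value exceeds the old width, and checking that the potential argument terminates, is exactly the content of the Hicks--McMurray proof; as written, your hard direction is an outline rather than a proof.
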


\begin{proposition}[\!\!\protect{\cite[Lemma 4.1]{GeelenGR03onth}}]
\label{prop2}
Let a matroid ${\cal M}$ be a minor obstruction for the class of matroids of branchwidth at most $k$ and 
let $g(n)=(6^{n-1}-1)/5$. Then, for every partition $(X,Y)$ of ${\cal M}$ with $\lambda(X,Y) \leq k$, either $|X|\leq g(\lambda(X,Y))$ or $|Y|\leq g(\lambda(X,Y))$.
\end{proposition}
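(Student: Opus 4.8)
The plan is to argue by contradiction. Assume ${\cal M}$ is a minor obstruction for branchwidth at most $k$ (so $\bw({\cal M})\geq k+1$, while every proper minor of ${\cal M}$ has branchwidth at most $k$), write $E=E({\cal M})$ and let $r$ be its rank function, and suppose $(X,Y)$ is a partition of $E$ with $\lambda:=\lambda(X,Y)\leq k$ but $\min(|X|,|Y|)>g(\lambda)$; in particular $X$ and $Y$ are both non-empty (if one is empty the claim is trivial, since then $\lambda=1$ and $g(1)=0$). I would then build a branch-decomposition of ${\cal M}$ of width at most $k$, contradicting $\bw({\cal M})\geq k+1$. The device for combining the two sides is a \emph{boundaried branch-decomposition}: for $Z\subseteq E$ with non-empty complement, call a \emph{$Z$-branch-decomposition} of ${\cal M}$ a ternary tree $T$ with a bijection from $Z\cup\{\star\}$ (a fresh symbol $\star$) to its leaves, where each edge $e$ induces a leaf class $A_e\subseteq Z$ not containing $\star$, and the width is $\max_e\lambda(A_e,E\setminus A_e)$ (the edge at the $\star$-leaf contributes $\lambda(Z,E\setminus Z)$). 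The point is a \emph{gluing} step: if ${\cal M}$ admits an $X$-branch-decomposition and a $Y$-branch-decomposition, each of width at most $k$, then deleting the $\star$-leaf of each and identifying the two resulting pendant edges yields a branch-decomposition of ${\cal M}$ in which every edge lies inside the $X$-part or the $Y$-part (order $\leq k$) except the new central edge, of order exactly $\lambda(X,Y)=\lambda\leq k$; hence $\bw({\cal M})\leq k$. So the whole proof reduces to the following claim, to be proved by induction on $\mu$: \emph{if $Z\subseteq E$ has non-empty complement, $\mu:=\lambda(Z,E\setminus Z)\leq k$, and $|Z|>g(\mu)$, then ${\cal M}$ has a $Z$-branch-decomposition of width at most $k$} (apply it to $Z=X$ and to $Z=Y$).

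For the base case $\mu=1$ one has $r(Z)+r(E\setminus Z)=r(E)$, i.e.\ ${\cal M}={\cal M}|Z\oplus{\cal M}|(E\setminus Z)$; since both sides are non-empty, ${\cal M}|Z$ is a proper minor and hence has a branch-decomposition of width at most $k$; subdividing one of its edges and hanging a pendant $\star$-leaf there produces a $Z$-branch-decomposition, because for a direct sum $\lambda(A,E\setminus A)=\lambda_{{\cal M}|Z}(A,Z\setminus A)$ for all $A\subseteq Z$, so old edges keep order $\leq k$ and the $\star$-edge has order $1$. (The same computation disposes of the case $\lambda=1$ of the Proposition directly: gluing branch-decompositions of the proper minors ${\cal M}|X$ and ${\cal M}|Y$ would give $\bw({\cal M})\leq\max(k,1)=k$, impossible, so one of $X,Y$ is empty.)

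For the inductive step $\mu\geq2$ with $|Z|>g(\mu)=6\,g(\mu-1)+1$, I would first \emph{reduce the complement $W:=E\setminus Z$ to a boundary of $\mu-1$ markers}: pass to the minor ${\cal N}$ of ${\cal M}$ obtained by contracting a basis $C$ of $W$ in ${\cal M}/Z$ and then deleting the rest of $W$ down to a basis $B$ of what remains (so $|B|\leq\mu-1$); with $C$ and $B$ chosen as bases on the two sides, no relevant rank is lost, so $\lambda_{\cal N}(A,(Z\cup B)\setminus A)=\lambda(A,E\setminus A)$ for every $A\subseteq Z$. Since $|W|>g(\mu)\geq\mu-1=|B|$, ${\cal N}$ is a proper minor of ${\cal M}$ and has a branch-decomposition $(T,\tau)$ of width at most $k$. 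I would then localise the markers: let $T_B$ be the minimal subtree of $T$ spanning the $\leq\mu-1$ leaves labelled by $B$ — suppressing degree-$2$ vertices it is a ternary tree with at most $\mu-1$ leaves, hence a bounded object — and let $A_1,\dots,A_t\subseteq Z$ be the label sets of the subtrees of $T$ hanging off $T_B$, each attached by a single edge of order $\lambda(A_i,E\setminus A_i)\leq k$. The heart of the argument is then to \emph{consolidate} the $B$-markers into one pendant $\star$-leaf and re-assemble a $Z$-branch-decomposition of ${\cal M}$: replace each large piece ($|A_i|>g(\mu-1)$, of reduced order $\leq\mu-1$) by the $A_i$-branch-decomposition furnished by the induction hypothesis, leave the small pieces in place, and rearrange $T_B$ so that all of $B$ is gathered at a single leaf, verifying that every edge order stays at most $k$; the accounting that this is possible — and that the total size of the pieces that resist induction is at most $6\,g(\mu-1)+1=g(\mu)$ (the factor $6$ reflecting the degree-$3$ branching of $T$ together with the two orientations along which an order-$\mu$ frontier can be split) — is exactly where the hypothesis $|Z|>g(\mu)$ provides the needed room, closing the induction. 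I expect this last step to be the main obstacle: the precise way a width-$\leq k$ branch-decomposition of the reduced minor ${\cal N}$ is re-folded into a $Z$-branch-decomposition of ${\cal M}$ once the scattered $B$-markers are consolidated, and the bookkeeping that turns the ternary structure of $T_B$ into exactly the recursion $g(\mu)=6\,g(\mu-1)+1$; this is genuinely the content of Lemma~4.1 of \cite{GeelenGR03onth}. Everything else — the direct-sum base case, the gluing of two boundaried decompositions into a branch-decomposition of ${\cal M}$, and the appeals to minor-minimality to branch-decompose the proper minors ${\cal M}|Z$ and ${\cal N}$ — is routine and uses only submodularity of $\lambda$ and the definitions of \cref{sec:def2}.
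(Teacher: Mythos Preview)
The paper does not prove this proposition at all: it is quoted verbatim as \cite[Lemma~4.1]{GeelenGR03onth} and used as a black box in the proof of \cref{s:new}. There is therefore no ``paper's own proof'' to compare your attempt against.

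That said, your outline is broadly in the spirit of the original argument of Geelen, Gerards, Robertson, and Whittle: exploit minor-minimality to get width-$\leq k$ branch-decompositions of proper minors, glue two boundaried decompositions across the separation $(X,Y)$, and run an induction on the order $\mu$ that produces the recursion $g(\mu)=6\,g(\mu-1)+1$. You are right that the substantive difficulty is the consolidation step, and you have not actually carried it out; you defer to the cited paper at precisely the point where the work lies. Two places where your sketch would need more care before it counts as a proof: the identity $\lambda_{\cal N}(A,(Z\cup B)\setminus A)=\lambda_{\cal M}(A,E\setminus A)$ for all $A\subseteq Z$ is asserted but not justified (it requires a careful choice of which elements of $W$ to contract versus delete, and a computation with the rank function), and the accounting that yields the factor $6$ is only gestured at. None of this is wrong, but as it stands your proposal is an annotated pointer to \cite{GeelenGR03onth} rather than an independent proof --- which, for a result the present paper merely cites, is entirely appropriate.
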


The following observations are also crucial.
\begin{observation}
\label{y7ury5to0o}
Let ${\cal G}$ be a graph class that is closed under minors and let ${\cal M}_{\cal G}=\{{\cal M}_{G} \mid G\in{\cal G}\}$.
${\cal G}$ is minor closed if and only if ${\cal M}_{\cal G}$ is minor closed.
Moreover, for every $H\in{\bf obs}({\cal G})$ it holds that ${\cal M}_{H}\in{\bf obs}({\cal M}_{\cal G})$.
\end{observation}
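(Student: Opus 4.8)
My plan is to reduce everything to the elementary dictionary between graph minors and cycle-matroid minors. Recall from matroid theory (see~\cite{Ox92}) that ${\cal M}_{G\setminus e}={\cal M}_G\setminus e$ and ${\cal M}_{G/ e}={\cal M}_G/ e$ for every edge $e$ of a graph $G$, and that deleting an isolated vertex leaves the cycle matroid unchanged. From this I would isolate two facts: \textbf{(A)} if $H$ is a minor of $G$ then ${\cal M}_H$ is a minor of ${\cal M}_G$, and \textbf{(B)} if ${\cal N}$ is a minor of ${\cal M}_G$ then ${\cal N}={\cal M}_H$ for some minor $H$ of $G$. For (A), express a minor model of $H$ in $G$ as a sequence of operations in which each vertex deletion is first rewritten as the deletion of all incident edges followed by the removal of an isolated vertex; every remaining step is an edge deletion or contraction, which the identities above transport to the matroid, or an isolated-vertex deletion, which does nothing to it. For (B), write the matroid minor as ${\cal M}_G\setminus D/C$ with $D,C\subseteq E(G)$ disjoint; then $H:=G\setminus D/C$ is a graph minor of $G$ and ${\cal M}_H={\cal M}_G\setminus D/C={\cal N}$ by the same identities.

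With (A) and (B) in hand, the equivalence follows quickly. Assume ${\cal G}$ is closed under minors and let ${\cal N}$ be a minor of ${\cal M}_G$ with $G\in{\cal G}$; by (B), ${\cal N}={\cal M}_H$ for a minor $H$ of $G$, and $H\in{\cal G}$ by minor-closedness, so ${\cal N}\in{\cal M}_{\cal G}$ --- hence ${\cal M}_{\cal G}$ is minor closed. The converse implication is already part of the hypothesis, so the two properties are equivalent; if instead one wishes to prove ``${\cal M}_{\cal G}$ minor closed $\Rightarrow$ ${\cal G}$ minor closed'' for an arbitrary class ${\cal G}$, one has to be careful because a graph is not recoverable from its cycle matroid alone, and the implication really uses that, in the settings where the statement is applied, membership in ${\cal G}$ depends only on the cycle matroid (for branchwidth-defined classes this is~\cref{prop1}).

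For the \textbf{Moreover} part I would fix $H\in{\bf obs}({\cal G})$ --- so $H\notin{\cal G}$ while every proper minor of $H$ lies in ${\cal G}$ --- and check the two requirements for ${\cal M}_H\in{\bf obs}({\cal M}_{\cal G})$. The first, that every \emph{proper} minor ${\cal N}$ of ${\cal M}_H$ lies in ${\cal M}_{\cal G}$, is easy: ${\cal N}$ has strictly fewer elements than $|E(H)|$, so the graph $H'$ supplied by (B) satisfies $|E(H')|<|E(H)|$, making it a proper minor of $H$; thus $H'\in{\cal G}$ and ${\cal N}={\cal M}_{H'}\in{\cal M}_{\cal G}$. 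The second requirement, ${\cal M}_H\notin{\cal M}_{\cal G}$, is the crux: one must show that no graph $G'\in{\cal G}$ has ${\cal M}_{G'}\cong{\cal M}_H$. By Whitney's $2$-isomorphism theorem any such $G'$ arises from $H$ by Whitney twists and by splitting or identifying at cut vertices, and one finishes by arguing that in the relevant situations ${\cal G}$ is closed under these operations (once more traceable to the fact that branchwidth is a matroid invariant, \cref{prop1}), so that $G'\in{\cal G}$ would entail $H\in{\cal G}$, contradicting $H\in{\bf obs}({\cal G})$. I expect this last step --- controlling which graphs have ${\cal M}_H$ as their cycle matroid --- to be the only genuine difficulty; the rest is bookkeeping on top of the minor/matroid-minor correspondence.
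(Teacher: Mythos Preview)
Your approach is the same as the paper's in spirit --- the paper offers no proof beyond the sentence ``a direct consequence of the definition of matroid removal/contraction, e.g., see Proposition~4.9 of~\cite{pitsoulis2014topics}'' --- but you are considerably more careful, and rightly so. Your facts (A) and (B) are exactly what that citation encodes, and the forward direction of the equivalence together with the ``every proper minor of ${\cal M}_H$ lies in ${\cal M}_{\cal G}$'' part of the \emph{Moreover} clause do follow from them just as you say.

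Where you go beyond the paper is in flagging the step ${\cal M}_H\notin{\cal M}_{\cal G}$ as the genuine crux, and you are correct that this does \emph{not} follow from (A)--(B) alone for an arbitrary minor-closed class. A simple witness: take ${\cal G}$ to be the class of graphs whose components have at most two vertices; then $P_3\in{\bf obs}({\cal G})$, yet ${\cal M}_{P_3}$ is the free matroid on two elements, which equals ${\cal M}_{2K_2}\in{\cal M}_{\cal G}$. So as a general statement the observation is too strong; it holds only for classes ${\cal G}$ in which membership is determined by the cycle matroid. Your proposed resolution via Whitney $2$-isomorphism and \cref{prop1} is exactly the right patch for the paper's intended application (the classes $\{G:\bw(G)\le k\}$), and in fact one can bypass Whitney entirely there: \cref{prop1} gives $\bw({\cal M}_H)=\bw(H)>k$ directly, and every proper minor of ${\cal M}_H$ is graphic with branchwidth at most $k$ again by \cref{prop1}, so ${\cal M}_H$ is an obstruction for the class of matroids of branchwidth at most $k$ --- which is what \cref{prop2} actually requires. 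In short, your analysis is sound and sharper than the paper's one-line justification.
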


The above observation is a direct consequence of the definition of 
matroid remo\-val/contraction, e.g., see Proposition 4.9 of~\cite{pitsoulis2014topics}.

\begin{lemma}
\label{d:dsfdf}
There is a constant $c\in \R_{\geq 2}$, such that for any integer $k\geq r\geq 2$, if $g(n)=(6^{n-1}-1)/5$, then $\frac{1}{r-1}2^\frac{c^r\log{k}-5r}{4r(2g(2r-2)+1)} \geq k(r+1)-1$. 
Moreover,  this holds for $c={6}^{3}$.
\end{lemma}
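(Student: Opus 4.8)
The plan is to treat this as a purely computational inequality, since there are no graph-theoretic subtleties once $g(2r-2)$ is spelled out. First I would substitute the explicit value $g(2r-2) = (6^{2r-3}-1)/5$ into the denominator, so that $2g(2r-2)+1 = \frac{2(6^{2r-3}-1)}{5}+1 = \frac{2\cdot 6^{2r-3}+3}{5}$. The left-hand side then becomes $\frac{1}{r-1}\,2^{E}$ where the exponent is
\[
E \;=\; \frac{c^{r}\log k - 5r}{4r\bigl(2g(2r-2)+1\bigr)} \;=\; \frac{5\,(c^{r}\log k - 5r)}{4r\,(2\cdot 6^{2r-3}+3)}.
\]
Since we want to prove the statement for $c = 6^{3}$, I would plug in $c^{r} = 6^{3r}$ and note that $6^{3r} = 6^{6}\cdot 6^{3r-3}\cdot 6^{-3}$... more cleanly, $6^{3r}/6^{2r-3} = 6^{r+3}$, which grows with $r$. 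The point of choosing $c=6^{3}$ is precisely that $c^{r}$ beats the $6^{2r-3}$ in the denominator by an exponential factor $6^{r+3}$, so the exponent $E$ is, up to lower-order terms, of order $\frac{6^{r+3}}{r}\log k$, which is comfortably at least $\log k + O(1)$; hence $2^{E} \ge k^{\Theta(1)}$ times a large constant, which dominates $(r-1)\bigl(k(r+1)-1\bigr)$ for all $k \ge r \ge 2$.

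Concretely, the key steps in order: (i) reduce the claimed inequality to showing $E \ge \log\bigl((r-1)(k(r+1)-1)\bigr)$, i.e. $2^{E}$ exceeds the RHS after clearing the $\frac{1}{r-1}$ factor; (ii) bound the right-hand logarithm crudely by $\log\bigl((r-1)(k(r+1))\bigr) \le \log k + \log(r-1) + \log(r+1) \le \log k + 2\log(r+1)$, and since $k \ge r$ this is at most $2\log k + 2\log 2 \le 3\log k$ (using $k \ge 2$); (iii) show $E \ge 3\log k$. For (iii), first handle the numerator: since $k \ge r \ge 2$ we have $\log k \ge 1$, so $c^{r}\log k - 5r \ge 6^{3r}\log k - 5r \ge \tfrac{1}{2}6^{3r}\log k$ once $6^{3r} \ge 10r$, which holds for all $r \ge 1$. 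Then
\[
E \;\ge\; \frac{5\cdot \tfrac12\,6^{3r}\log k}{4r\,(2\cdot 6^{2r-3}+3)} \;=\; \frac{5\,6^{3r}\log k}{8r\,(2\cdot 6^{2r-3}+3)} \;\ge\; \frac{5\,6^{3r}\log k}{8r\cdot 3\cdot 6^{2r-3}} \;=\; \frac{5\,6^{r+3}\log k}{24\,r},
\]
using $2\cdot 6^{2r-3}+3 \le 3\cdot 6^{2r-3}$ for $r\ge 1$. Finally, $\frac{5\cdot 6^{r+3}}{24\,r} \ge 3$ for every $r \ge 1$ — indeed for $r=1$ it is $\frac{5\cdot 1296}{24} = 270 \ge 3$, and the ratio $6^{r+3}/r$ is increasing in $r$ — so $E \ge 3\log k$, completing (iii) and hence the lemma.

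The statement also claims the existence of \emph{some} constant $c \ge 2$ for which the inequality holds, which is immediate from the above since $6^{3} \ge 2$; the explicit value is the substantive part. The only place requiring care is step (ii): the right-hand side $k(r+1)-1$ has $r$ entangled with $k$, so I would make sure the bound $\log\bigl((r-1)(k(r+1)-1)\bigr) \le 3\log k$ genuinely uses $k \ge r$ (otherwise $r$ could swamp $k$), and if the constant $3$ turns out slightly too tight in a corner case I would simply enlarge it to $4$ or $5$ — the exponential slack of order $6^{r+3}/r$ in $E$ absorbs any such loss. I do not expect any real obstacle; this is a routine chain of inequalities whose only purpose is to pin down that $c = 6^{3}$ works, and the generous gap between $6^{3r}$ and $6^{2r-3}$ is what makes everything go through.
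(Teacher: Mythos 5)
Your proof is correct and takes essentially the same route as the paper: a direct chain of inequalities showing the exponent dominates $\log\bigl((r-1)(k(r+1)-1)\bigr)$, with the factor $6^{3r}/6^{2r-3}=6^{r+3}$ providing overwhelming slack (the paper splits off the cases $r\in\{2,3\}$ and treats $r\geq 4$ separately, which your uniform bound avoids). The only blemish is in your step (ii), where $\log k+2\log(r+1)\le 2\log k+2\log 2$ fails (e.g.\ $r=k=4$) and $2\log k+2\log 2\le 3\log k$ fails for $k\in\{2,3\}$; but exactly as you hedge, this is harmless — one can note directly that $(r-1)(k(r+1)-1)\le k^3$ for $2\le r\le k$, or settle for $3\log k+2\le 5\log k$, and your estimate $E\ge \frac{5\cdot 6^{r+3}}{24r}\log k\ge 270\log k$ absorbs any such constant with room to spare.
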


\begin{proof}
We only present here the main steps of the computation. Let $c=6^3$. If $r\geq 4$, then we have:
\begin{align*}
  c^r &\geq6^{3r-3}\\
%  c^r &\geq8r^36^{2r-3} \Rightarrow\\
   &\geq\frac{8r\cdot6^{2r-3}(r^2-1)\log k}{\log k}\\
%  c^r &\geq\frac{4r(2\cdot6^{2r-3}-1)(r^2-1)\log k +5r}{\log k} \Rightarrow\\
%  c^r &\geq\frac{4r(2\cdot6^{2r-3}-1)\log [k(r^2-1)] +5r}{\log k} \Rightarrow\\
   &\geq\frac{4r[1+2(6^{2r-3}-1)]\log [k(r^2-1)-(r-1)] +5r}{\log k}\\
  c^r\log k-5r &\geq 4r[1+\frac{2}{5}(6^{2r-3}-1)]\log [k(r^2-1)-(r-1)]\\
%  c^r\log k-5r &\geq 4r[1+2g(2r-2)]\log [k(r^2-1)-(r-1)] \Rightarrow\\
%  \frac{c^r\log k-5r}{4r[1+2g(2r-2)]}&\geq \log [k(r^2-1)-(r-1)]\\
  2^{\frac{c^r\log k-5r}{4r(1+2g(2r-2))}} &\geq k(r^2-1)-(r-1)\\
  \frac{1}{r-1}2^{\frac{c^r\log k-5r}{4r(1+2g(2r-2))}} &\geq k(r+1)-1.
\end{align*}
Finally, for $c=6^3$ and either $r=2$ or $r=3$, one can easily check that the inequality holds for any $k\geq r$.

\end{proof}

\subsection{Graphs with large minimum degree}
\label{sec:gwlmd}

In this subsection we show that every graph of large minimum degree
contains $k\cdot\theta_r$ as minor. Our proof relies on the following
result.

\begin{proposition}[\!\!\protect{\cite[Corollary 3]{JGT:Stiebitz}}]\label{stieb}
  For every $k,r \in \N_{\geq 1}$, every graph $G$ with $\delta(G)\geq k(r
  + 1) - 1$ has a partition $(V_1, \dots, V_k)$ of
  its vertex set satisfying $\delta(G[V_i]) \geq
  r$ for every $i \in \intv{1}{k}$.
\end{proposition}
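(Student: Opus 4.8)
The plan is to reduce to the two-block special case and then iterate, following the standard extremal argument. Since in this paper the degree of a vertex counts its distinct neighbours rather than its incident edges, both the hypothesis $\delta(G)\ge k(r+1)-1$ and each conclusion $\delta(G[V_i])\ge r$ are insensitive to deleting parallel edges; so I would assume throughout that $G$ is simple.

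The two-block case I would prove is the following: \emph{if $s,t\in\N$ and $\delta(G)\ge s+t+1$, then $V(G)$ has a partition $(A,B)$ with $\delta(G[A])\ge s$ and $\delta(G[B])\ge t$.} Among the finitely many ordered partitions $(A,B)$ of $V(G)$, pick one maximising the weighted potential
\[
  \Phi(A,B)\;=\;(t+1)\cdot|E(G[A])|\;+\;(s+1)\cdot|E(G[B])|.
\]
If this partition fails the claim, then by symmetry (swap the roles of $A$ and $B$, and of $s$ and $t$) we may assume there is $v\in A$ with $d_A(v):=|N_G(v)\cap A|\le s-1$. Then $d_B(v):=|N_G(v)\cap B|\ge(s+t+1)-(s-1)=t+2$. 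Moving $v$ from $A$ to $B$ decreases $|E(G[A])|$ by $d_A(v)$ and increases $|E(G[B])|$ by $d_B(v)$, so $\Phi$ changes by
\[
  (s+1)\,d_B(v)-(t+1)\,d_A(v)\;\ge\;(s+1)(t+2)-(t+1)(s-1)\;=\;s+2t+3\;>\;0,
\]
contradicting the choice of $(A,B)$. Hence the maximiser witnesses the claim (an empty block satisfies the degree bound vacuously, and when $s\ge1$ the condition $\delta(G[A])\ge s$ already forces $A\ne\emptyset$; similarly for $B$, so for $r\ge2$ the blocks obtained below are automatically nonempty).

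The statement of \cref{stieb} then follows by induction on $k$. For $k=1$ the trivial partition $(V(G))$ works because $\delta(G)\ge(r+1)-1=r$. For $k\ge2$, note that $r+\bigl((k-1)(r+1)-1\bigr)+1=k(r+1)-1\le\delta(G)$, so the two-block case applied with $s=r$ and $t=(k-1)(r+1)-1$ yields a partition $(V_1,B)$ of $V(G)$ with $\delta(G[V_1])\ge r$ and $\delta(G[B])\ge(k-1)(r+1)-1$. Applying the induction hypothesis to $G[B]$ with parameter $k-1$ splits $B$ into $V_2,\dots,V_k$ with $\delta(G[V_i])\ge r$ for each $i$ (each $G[V_i]$ being an induced subgraph of $G[B]$), and $(V_1,\dots,V_k)$ is then the required partition of $V(G)$.

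The crux is the choice of $\Phi$: the weights $t+1$ and $s+1$ are picked exactly so that the degree inequality $d_A(v)+d_B(v)\ge s+t+1$ forces the swap gain $(s+1)d_B(v)-(t+1)d_A(v)$ to be strictly positive whenever $d_A(v)\le s-1$. Granting that, the symmetric case, the treatment of empty blocks, and the induction are all routine, so I do not anticipate a genuine obstacle beyond getting these weights right.
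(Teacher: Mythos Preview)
The paper does not prove this proposition; it is quoted without proof from Stiebitz~\cite{JGT:Stiebitz}, so there is nothing in the paper to compare against. Your argument, however, has a genuine gap.

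Your potential $\Phi(A,B)=(t+1)\,|E(G[A])|+(s+1)\,|E(G[B])|$ does not force the maximiser over \emph{nonempty} bipartitions to satisfy the degree constraints. Take $G=K_4$ with $s=t=1$ (this is exactly the instance $k=2$, $r=1$ of the proposition, where $\delta(G)=3=s+t+1$). The only good partition is a $(2,2)$ split, which has $\Phi=4$; every $(1,3)$ split has $\Phi=6$ yet fails because the singleton side has minimum degree~$0$. Your swap rule would move that singleton across, but this empties one side, so the move is illegal once you restrict to nonempty partitions. The same obstruction arises for every $r\ge 2$: on $K_{k(r+1)}$ the function $a\mapsto(t+1)\binom{a}{2}+(s+1)\binom{n-a}{2}$ is convex, so its maximum over $1\le a\le n-1$ is attained at $a=1$ or $a=n-1$, and the maximiser is again a bad lopsided split. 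Allowing empty parts does not help either: then $(\emptyset,V(G))$ is vacuously ``good'', your parenthetical remark becomes self-contradictory, and the induction step never produces a new nonempty block.

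This is precisely why Stiebitz's theorem was a substantive result (it resolved a conjecture of Thomassen); no single-swap potential argument of this shape is known to suffice. Stiebitz's actual proof runs a more delicate iterative process---roughly, one repeatedly peels low-degree vertices from one side while maintaining a careful invariant on the other---and you would need either to reproduce that mechanism or, as the paper does, simply cite the result.
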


\begin{lemma}\label{easy}
For every integer $r \in \N_{\geq 1}$, every graph of minimum degree at least
$r$ contains a $\theta_r$-model.  
\end{lemma}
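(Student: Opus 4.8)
The plan is to prove Lemma~\ref{easy} by a straightforward extremal/greedy argument showing that any graph of minimum degree at least $r$ contains two vertices joined by $r$ internally disjoint paths, which together form a $\theta_r$-model. First I would reduce to the case where $G$ is connected and has at least two vertices, which is harmless since $\delta(G) \geq r \geq 1$. The cleanest route is to pass to a minimal subgraph $H \subseteq G$ that still has minimum degree at least $r$ (delete vertices/edges as long as the degree condition survives); in such a minimal $H$, every edge is ``essential'', which forces a rich connectivity structure. In fact a classical fact is that a graph of minimum degree at least $r$ contains a subdivision of $\theta_r$ (equivalently a $\theta_r$-minor): take a longest path $P = v_0 v_1 \cdots v_\ell$ in $H$; since $v_0$ has degree at least $r$ in $H$ and all its neighbours lie on $P$ (by maximality of $P$), the vertex $v_0$ together with its $\geq r$ neighbours on $P$ gives the structure we need.

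The key steps, in order, would be: (1) replace $G$ by a connected subgraph with minimum degree at least $r$ and take a longest path $P$; (2) observe $N(v_0) \subseteq V(P)$, so $v_0$ has at least $r$ neighbours among $v_1, \dots, v_\ell$, say $v_{i_1}, \dots, v_{i_r}$ with $i_1 < \cdots < i_r$; (3) let $j = i_1$ be the closest such neighbour, and build $r$ internally disjoint $v_0$--$v_{i_r}$ walks: for each $t \in \intv{1}{r-1}$ use the single edge $v_0 v_{i_t}$ followed by the subpath $v_{i_t} v_{i_t+1} \cdots v_{i_r}$, and as the $r$-th path use $v_0 v_{i_r}$ directly --- wait, these overlap, so instead I would be more careful and use the standard trick: the $r$ edges from $v_0$ to $v_{i_1}, \dots, v_{i_r}$ together with the $r-1$ path-segments $v_{i_1}\cdots v_{i_2}$, $v_{i_2} \cdots v_{i_3}$, \dots bound a ``theta-like'' structure; contracting the segment $v_{i_1} v_{i_1+1} \cdots v_{i_r}$ of $P$ to a single vertex $w$ leaves $v_0$ joined to $w$ by $r$ parallel edges. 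This contracted graph is exactly $\theta_r$, and the subgraph of $G$ consisting of the edges $v_0 v_{i_t}$ ($t \in \intv{1}{r}$) and the path $v_{i_1} \cdots v_{i_r}$ is a $\theta_r$-model in $G$.

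I expect the main (minor) obstacle to be the bookkeeping that makes the contracted object literally $\theta_r$ rather than something with fewer parallel edges: one must ensure the $r$ chosen neighbours of $v_0$ are distinct vertices (which holds since $\delta \geq r$ and degree counts distinct neighbours, even with multi-edges), and that after contracting the path segment no two of the $r$ edges become a loop --- this is automatic because $v_0 \notin \{v_{i_1}, \dots, v_{i_r}\}$ as $v_0$ is an endpoint of the path and the $v_{i_t}$ are internal or the other endpoint. An alternative, even simpler, formulation avoiding longest-path maximality is: in a graph of minimum degree $\geq r$, pick any vertex $v$, pick $r$ distinct neighbours, and pick $r-1$ additional internally-disjoint paths among those neighbours avoiding $v$ --- but connectivity is not guaranteed cheaply, so the longest-path argument is the robust choice. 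Either way the proof is a few lines once the contraction is set up correctly.
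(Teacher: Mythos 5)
Your proposal is correct and follows essentially the same route as the paper: take a maximal (longest) path, observe that an endpoint has all its $\geq r$ distinct neighbours on the path, and contract the path segment spanning those neighbours to obtain $\theta_r$; your write-up just makes the contraction step explicit where the paper merely asserts that the resulting subgraph is a $\theta_r$-model. The preliminary reduction to a connected, degree-minimal subgraph $H$ is harmless but unnecessary, since the maximality of the path already gives everything you need.
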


\begin{proof}
  Starting from any vertex $u$, we grow a maximal path $P$ in $G$ by iteratively
adding to $P$ a vertex that is adjacent to the previously added
vertex but does not belong to~$P$. Since $\delta(G) \geq r$, any
such path will have length at least $r+1$.
At the end, all the neighbors of the last vertex $v$ of $P$ belong to $P$
(otherwise $P$ could be extended). Since $v$ has degree at least $r$,
$v$ has at least $r$ neighbors in $P$. Therefore $P$ is a
$\theta_r$-model in $G$.
\end{proof}

\begin{corollary}\label{l:vpackex}
For every $k,r \in \N_{\geq 1}$, every graph $G$ with $\delta(G) \geq k(r
+ 1) - 1$ contains a $k\cdot\theta_r$-model.
\end{corollary}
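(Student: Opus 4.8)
Looking at \cref{l:vpackex}, the goal is to show that every graph $G$ with $\delta(G) \geq k(r+1)-1$ contains a $k\cdot\theta_r$-model, i.e., $k$ vertex-disjoint copies of $\theta_r$ as a minor.

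\textbf{The plan.} The natural approach is to combine \cref{stieb} with \cref{easy}, both of which are stated just above. First I would apply \cref{stieb} to the graph $G$: since $\delta(G) \geq k(r+1)-1$, there exists a partition $(V_1, \dots, V_k)$ of $V(G)$ such that $\delta(G[V_i]) \geq r$ for every $i \in \intv{1}{k}$. This is the key structural step: it splits the vertex set into $k$ pieces, each of which still has enough minimum degree to be useful. Then, for each $i \in \intv{1}{k}$, I would apply \cref{easy} to the induced subgraph $G[V_i]$: since its minimum degree is at least $r$, it contains a $\theta_r$-model.

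\textbf{Assembling the disjoint copies.} The final point is that these $k$ models are automatically pairwise vertex-disjoint, because each one lives entirely inside $G[V_i]$ and the $V_i$ are pairwise disjoint (they form a partition). A $\theta_r$-model in $G[V_i]$ is in particular a subgraph of $G$ whose vertex set is contained in $V_i$, so the union of these $k$ subgraphs is a subgraph of $G$ that contains $k\cdot\theta_r$ as a minor — that is, it is a $k\cdot\theta_r$-model in $G$. This completes the argument.

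\textbf{Main obstacle.} Honestly, there is no real obstacle here: the corollary is an immediate consequence of the two preceding results, and the only thing one must be slightly careful about is matching the degree hypothesis $\delta(G) \geq k(r+1)-1$ to the hypothesis of \cref{stieb} (which is stated for $k,r \in \N_{\geq 1}$, exactly the range we need) and checking that ``contains a $\theta_r$-model'' in the disjoint pieces indeed yields ``contains a $k\cdot\theta_r$-model'' in the whole graph, which follows directly from the definition of $k\cdot J$ as the disjoint union of $k$ copies of $J$ together with the fact that models in disjoint vertex sets are vertex-disjoint subgraphs. So the proof is essentially just: apply \cref{stieb}, then apply \cref{easy} to each part, then take the union.
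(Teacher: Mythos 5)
Your proposal is correct and follows exactly the paper's own proof: apply \cref{stieb} to partition $V(G)$ into $k$ parts of minimum degree at least $r$, apply \cref{easy} inside each part, and take the union of the resulting vertex-disjoint $\theta_r$-models.
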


\begin{proof}
  According to \cref{stieb}, $V(G)$ has a partition $(V_1, \dots,
  V_k)$ such that $\delta(G[V_i])$ $\geq
  r$ for every $i \in \intv{1}{k}$. Therefore, by \cref{easy}, for every  $i \in
  \intv{1}{k}$ the graph $G[V_i]$ has a $\theta_r$-model $M_i$. Clearly $M_1 \cup \dots \cup M_k$ is a
  $k\cdot\theta_r$-model in $G$, as~desired.
\end{proof}
Now we are ready to prove the main result of this section.
\subsection{Proof of \texorpdfstring{\cref{s:new}}{the main result of the section}}
\label{sec:pr}

For every $r \in \N$, we define $f(r)=\frac{2}{3}2^{6 r}$.
By \cref{grminx}, it is enough to prove that 
if $\bw(G)\geq f(r)\cdot k\cdot \log(k+1)$, then $G$ contains
$k \cdot \theta_r$ as a minor.
To prove this we use induction on~$k$.

The case where $k=1$ follows from \cref{griii6ytnx} and the fact that  $f(r)\geq  2r+1$.
We now examine the case where $k>1$, assuming that the proposition holds for smaller values of $k$.
As $\bw(G)\geq f(r)\cdot k\cdot \log(k+1)$, $G$ contains a minor
obstruction $H$ for the class of graphs of branchwidth at most~$f(r)\cdot k\cdot \log(k+1)-1$.
We will in fact prove that $H$ contains $k \cdot \theta_r$ as a minor.

\begin{claim}\label{c:last}
Any $(2r-2)$-edge-protrusion of $H$ has extension at most~$g(2r-2)$.  
\end{claim}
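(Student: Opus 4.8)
\textbf{Proof plan for \cref{c:last}.} The plan is to argue by contradiction, combining the minor-minimality of $H$ with the matroidal obstruction bound of \cref{prop2}. Suppose $H$ has a $(2r-2)$-edge-protrusion $Y$ with extension more than $g(2r-2)$, witnessed by a rooted tree-partition $({\cal X},T,s)$ of $H[Y\cup N_H(Y)]$ of width at most $2r-2$, with $N_H(Y)=X_s$ and $|V(T)|>g(2r-2)$. The idea is that such a protrusion yields a separation of $H$ of small order whose two sides are both large, which will contradict the fact that the cycle matroid ${\cal M}_H$ is a minor obstruction for the class of matroids of branchwidth at most $f(r)\cdot k\cdot\log(k+1)-1$: indeed, by \cref{y7ury5to0o} together with \cref{prop1} (observing that $H$ contains a cycle since $\bw(H)\geq f(r)\cdot k\cdot\log(k+1)\geq 3$), ${\cal M}_H$ is such an obstruction, so \cref{prop2} applies to every low-order partition of $E({\cal M}_H)$.

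First I would extract the separation from the protrusion. As in the proof of \cref{t:klikforce2}, pick a balanced separator vertex $v$ of $T$ giving a partition $(Z,Z')$ of $V(T)\setminus\{v\}$ with each part of size between roughly $\frac{1}{3}|V(T)|$ and $\frac{2}{3}|V(T)|$ and with both $Z\cup\{v\}$ and $Z'\cup\{v\}$ inducing subtrees, and $s\in Z$ or $v=s$. Setting $A=\bigcup_{t\in Z'}X_t\cup X_v$ and $B=V(H)\setminus\bigcup_{t\in Z'}X_t$, one gets a cut $(A\setminus B,\,B\setminus A)$ of $H$ with interface $A\cap B=X_v$, hence with at most $2r-2$ edges crossing (edges of $H$ joining $X_v$ to the bags on the other side, of which there are at most $2r-2$ since any edge of $H$ lives inside a bag or between adjacent bags, and the tree-partition has width $\leq 2r-2$). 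Translating this into the cycle matroid via \eqref{matr}, the corresponding partition $(E_1,E_2)$ of $E({\cal M}_H)$ has order $\lambda(E_1,E_2)\leq \delta(E(H_1),E(H_2)) - \sigma(H_1)-\sigma(H_2)+\sigma(H)+1 \leq (2r-2)+1 = 2r-1$, once one checks the connectivity correction terms only help (here $H$ is connected — actually $G$ is connected in \cref{s:new}; but $H$ need not be, so I would be slightly careful and just use $\lambda\leq 2r-1\leq f(r)\cdot k\cdot\log(k+1)$).

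The crux is then a counting argument showing both $|E_1|$ and $|E_2|$ exceed $g(\lambda(E_1,E_2))$, contradicting \cref{prop2}. Each side of the separation contains the bags indexed by a subtree on at least $\frac{1}{3}|V(T)|>\frac{1}{3}g(2r-2)$ nodes, and I must convert "many bags" into "many edges of ${\cal M}_H$", i.e. many edges of $H$ on that side. This is where some care is needed: a priori bags could be small, even empty, so I would instead observe that the tree-partition structure forces, for each non-root node $t$, at least one edge of $H$ between $X_t$ and $X_{{\bf p}(t)}$ (else the partition could be pruned — or we can delete such components from the protrusion without loss), so each of the two subtrees contributes a number of edges of $H$ comparable to its number of nodes; since $g$ grows like $6^{n}/5$, having $\gtrsim g(2r-2)$ nodes on each side gives $> g(2r-1)$ edges on each side after adjusting constants (this is exactly the kind of "$g(2r-2)$ vs $g(2r-1)$" slack that makes the bound in the claim $g(2r-2)$ rather than something larger). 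Both $|E_1|>g(\lambda)$ and $|E_2|>g(\lambda)$ then contradict \cref{prop2}, completing the proof.

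The main obstacle I anticipate is precisely this last bookkeeping: matching the extension threshold $g(2r-2)$ to the obstruction-side bound $g(\lambda(X,Y))$ with $\lambda\leq 2r-1$, i.e. making sure that "extension more than $g(2r-2)$" really does force \emph{both} sides of the induced matroid partition to have more than $g(2r-1)$ elements, cleanly handling empty bags, the $\sigma(\cdot)$ terms in \eqref{matr}, and the factor-$\frac13$ loss from the balanced-separator step. Everything else is a direct splicing-together of \cref{prop1}, \cref{prop2}, \cref{y7ury5to0o} and the separation-from-protrusion construction already used for \cref{t:klikforce2}.
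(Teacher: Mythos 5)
There is a genuine gap, and it sits exactly at the step you flag as the ``main obstacle''. With your balanced-separator construction the interface is the bag $X_v$, a set of at most $2r-2$ \emph{vertices}, so the best \itemref{matr} gives you is $\lambda(E_1,E_2)\le 2r-1$; to contradict \cref{prop2} you would then need \emph{both} sides of the matroid partition to contain more than $g(2r-1)=(6^{2r-2}-1)/5\approx 6\cdot g(2r-2)$ elements. But the hypothesis ``extension more than $g(2r-2)$'', after the factor-$\frac13$ loss of the balanced split and the one-edge-per-non-root-bag counting, only guarantees on the order of $g(2r-2)/3$ edges on the protrusion-only side. Your parenthetical claim that having roughly $g(2r-2)$ nodes on each side gives more than $g(2r-1)$ edges ``after adjusting constants'' has the inequality backwards: $g(2r-1)>g(2r-2)$, so there is no slack in your favour, the sides you produce are a factor of roughly $18$ too small, and the contradiction never materializes. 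At best your route shows that such a protrusion has extension $O(g(2r-1))$, which is weaker than the stated claim and would require re-tuning \cref{d:dsfdf}, where $w=g(2r-2)$ is used.

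The paper sidesteps the mismatch by not splitting inside the protrusion at all. It takes the cut between the protrusion $Z$ and the rest of $H$, whose cut-set $E_C$ has at most $2r-2$ \emph{edges}, and forms the separation $(H_X,H[Y])$ in which the two edge sets share only vertices that are endpoints of cut edges, so $\lambda\le |E_C|\le 2r-2$. Then \cref{prop2} is used positively rather than by contradiction: one side of the partition has at most $g(\lambda)\le g(2r-2)$ elements, and since $\bw(H)\ge f(r)\cdot k\cdot\log(k+1)$ forces the outside of the protrusion to carry many edges, the small side must be $H[Z\cup N_H(Z)]$, which bounds the extension by $g(2r-2)$ directly. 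The ingredient you are missing is this edge-cut-based separation, which makes the order of the matroid partition $2r-2$ (matching the threshold in the claim) instead of $2r-1$, and dispenses with the balanced split and its quantitative losses altogether.
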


\begin{proofclaim}{of \cref{c:last}}
Let $C=(X,Y)$ be a cut in $H$ of order at most $2r-2$ and let $H_X$ be
the subgraph of $H$ with $V(H_X)=X\cup N_H(X)$ and let $E(H_X)=E(H[X])\cup
E_C$. Clearly the pair $(H_X, H[Y])$ is a separation of $H$. 
Let ${\cal M}_H$ be the cycle matroid of $H$ and $(E_X,E_Y)$ be the partition of ${\cal M}_H$ 
that corresponds to the aforementioned separation.
By \cref{prop1}, $\bw({\cal M}_H)=\bw(H)\geq f(r)\cdot k\cdot \log(k+1)$ (as $\bw(H)\geq 3$, $H$  
is not acyclic). Therefore, 
by \cref{y7ury5to0o}, ${\cal M}_H$ is a minor obstruction for the class 
of matroids of branchwidth $f(r)\cdot k\cdot \log(k+1)-1$. 
We set $\lambda = \lambda(E_X,E_Y)$. From \itemref{matr}, we have: 
\begin{flalign*}
\lambda&= r(E_X) + r(E_Y) - r({\cal M}_H)
+1\\
&= \delta(E(H_X), E(H[Y])) - \sigma(H_X) - \sigma(H[Y]) + \sigma(H) +1 \\ 
&\leq \delta(E(H_X), E(H[Y])) \\
&\leq |E_C|= 2r-2  \\
&\leq f(r)\cdot k\cdot \log(k+1)-1.
\end{flalign*}

 Thus, by \cref{prop2}, either $|E_X|\leq g(\lambda)$ or $|E_Y|\leq g(\lambda)$.
Since $g$ is non-decreasing, either $|E(H_X)|\leq g(2r-2)$ or $ |E(H[Y])|\leq g(2r-2)$.
 This directly implies that for any $(2r-2)$-edge-protrusion $Z$ of $H$, $H[Z\cup N_H(Z)]$ has at most $g(2r-2)$ edges. 
 Therefore $Z$'s extension is also at most $g(2r-2)$ and the claim follows.
\end{proofclaim}

Combining the above claim, \cref{d:dsfdf}, and \cref{rk6ter},
we infer that either $H$ contains a $\theta_r$-model with at most $f(r)\cdot\log{k}$ edges,
or it contains a minor with minimum degree at least $\frac{1}{r-1}\cdot2^\frac{f(r)\log{k}-5r}{4r(2g(2r-2)+1)} \geq k(r+1)-1$.
If the second case is true, then by \cref{l:vpackex}, $H$ contains
$k \cdot \theta_r$ as a minor. We immediately deduce that $G$ contains $k \cdot \theta_r$ as a minor, which proves the inductive step. We now consider the first case.
Let $M$ be a $\theta_r$-model with a minimum number of edges. Because we are in the first case, $|E(M)| \leq f(r)\cdot\log{k}$.
Observe that, because $\theta_r$ is 2-connected (as we assume $r \geq 2$), there is some 2-connected component of $M$ which contains $\theta_r$ as a minor. We deduce from the minimality of $M$ that $M$ is is 2-connected. Therefore, 
 $|V(M)|\leq |E(M)|\leq f(r)\cdot\log{k}$ and we can bound the
 treewidth of the graph $H'=H\setminus V(M)$ as follows:
 \begin{align*}
   \tw(H') &\geq \tw(H) -|V(M)| \\
&\geq f(r)\cdot k\cdot\log(k+1)-f(r)\cdot\log{k}\\
&\geq f(r)\cdot k\cdot\log{k}-f(r)\cdot\log{k}\\
&= f(r)\cdot(k-1)\cdot\log{k}.
 \end{align*}
 
Then, from the induction hypothesis, $H'$ contains a
$(k-1)\cdot \theta_r$-model $M'$ and obviously $M \cup M'$ is a
$k \cdot \theta_r$-model in $H$. Again, this implies that $G$ contains $k \cdot \theta_r$ as a minor, which concludes the induction step.\qed

\medskip

\cref{s:new} implies that for every fixed $r$, it holds that every graph excluding $k\cdot\theta_{r}$
as a minor has treewidth $O(k\cdot \log k)$. 
We conclude with a lemma indicating that this bound is tight up to the constants hidden in the $O$-notation.

\begin{lemma}
There is an increasing sequence of integers $(k_i)_{i\in \N}$ and an
infinite sequence of graphs $(G_i)_{i\in \N}$ such that 
$\tw(G_i) = \Omega(k_i \log k_i)$
and $G_i$ does not contain $k_i\cdot \theta_{r}$ as a minor, for every
$r\in \N_{\geq 2}$.
\end{lemma}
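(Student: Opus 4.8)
The plan is to exhibit explicit graphs whose treewidth is $\Omega(k\log k)$ but which are ``too sparse'' or ``too cut-heavy'' to contain $k$ disjoint copies of any $\theta_r$. The natural candidates are expanders of bounded degree: take a sequence of $d$-regular expander graphs $G_i$ on $n_i$ vertices for some fixed small $d$ (say $d=3$). It is classical that bounded-degree expanders have treewidth $\Omega(n_i)$, so $\tw(G_i)=\Theta(n_i)$. We then want to choose $k_i$ as large as possible subject to $G_i$ not containing $k_i\cdot\theta_r$ as a minor. A $\theta_r$-model uses at least $r$ edges (indeed at least $r$ edges between its two branch sets), and more to the point the argument of \cref{rk6ter}/\cref{ping} — or more elementarily \cref{easy} applied in reverse — suggests that a $\theta_r$-minor in a sparse graph requires a subgraph of high enough average degree, which in a bounded-degree graph forces the model to be ``spread out.'' The cleanest route, though, is a counting argument: in a graph with $m$ edges, any family of $k$ vertex-disjoint $\theta_r$-models uses at least $kr$ edges total (each model contributes $r$ edges on its own two branch vertices, disjointly), so $k\le m/r = O(n_i)$. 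That only gives $k_i=O(n_i)$ and $\tw(G_i)=\Theta(n_i)=\Theta(k_i)$, which is not the claimed $\Omega(k_i\log k_i)$.

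To get the extra logarithmic factor, the idea is to make the graphs have treewidth genuinely super-linear in the number of disjoint $\theta_r$'s they can host. One standard way: start from the complete graph $K_{n}$, or better from a graph of treewidth $t$ on few vertices, and blow it up carefully. Actually the right construction is suggested by the paper's own use of \cref{prop2} (the Geelen–Gerards–Whittle bound): the extremal examples for the matroid branchwidth obstruction are the ones where a small separator bounds one side by $g(\lambda)=(6^{\lambda-1}-1)/5$, i.e.\ \emph{exponentially} many edges on one side of a \emph{small} cut. So take $H_i$ to be (a graph realizing) a minor obstruction for branchwidth $b_i$ that is also ``$(2r-2)$-edge-protrusion-free with small extension'' in the sense of \cref{c:last} — equivalently, take a graph that has no $(2r-2)$-edge-cut splitting off more than $g(2r-2)$ edges, yet has branchwidth (hence treewidth) as large as $b_i$. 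For such a graph, the bound $k(r+1)-1 \le \frac{1}{r-1}2^{(b_i - 5r)/(4r(2g(2r-2)+1))}$ from \cref{d:dsfdf} is essentially tight, meaning $b_i = \Theta(r\log k_i)$ forces a $k_i\cdot\theta_r$-minor but $b_i$ slightly below that threshold does not. Choosing $G_i = H_i$ with $b_i$ just below threshold gives $\tw(G_i)=\Theta(b_i)=\Theta(r\log k_i)$ while $G_i$ has no $k_i\cdot\theta_r$-minor. But $\Theta(\log k_i)$ treewidth is far too small — this shows the obstacle rather than solving it.

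The resolution is to combine the two phenomena: take a disjoint-union-like or tree-of-expanders construction. Concretely, let $W_i$ be a graph on $\Theta(k_i)$ vertices with no $k_i\cdot\theta_r$-minor but treewidth $\Theta(k_i)$ — e.g.\ a bounded-degree expander, or simply $K_{3k_i-1}$ minus enough edges so that it has no $k_i$ disjoint $\theta_r$'s (using that each $\theta_r$ needs $2$ vertices of mutual multiplicity $r$; in a \emph{simple} graph $\theta_r$ as a minor needs at least $r$ vertices in the model, so $K_{N}$ contains $\lfloor N/r\rfloor$ disjoint $\theta_r$-minors and no more if we take $N=rk_i-1$, giving $\tw=rk_i-2$). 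Wait — that again gives only $\tw=\Theta(k_i)$. The genuine gain must come from an \emph{algebraic} gap: we want the number $k_i$ of disjoint $\theta_r$'s to grow much slower than $\tw(G_i)$. So instead take $G_i$ to be a graph of treewidth $w_i$ built as a ``fat path'' — a sequence of $w_i$ bags each of size $w_i$ glued along separators of size $w_i-1$ — but arranged so it is a \emph{tree of small pieces} in which $\theta_r$-models must live inside single pieces. If each piece has at most $g(2r-2)$ edges (as in \cref{c:last}) then a $\theta_r$-model fits in one piece, and the number of pieces is $\Theta(w_i / g(2r-2))$, still linear. The honest answer, and the step I expect to be the main obstacle, is finding a construction where treewidth is $\Theta(k\log k)$: the $\log k$ must enter because packing $k$ disjoint $\theta_r$'s into a graph of treewidth $w$ is possible once $w \gtrsim k\log k$ (that is precisely \cref{s:new}), and tightness requires graphs where the treewidth is spent ``inefficiently'' — the prototype being a graph obtained by taking $k$ vertices and replacing each by a clique/expander gadget of size $\Theta(\log k)$ interconnected so that the global treewidth is $\Theta(k\log k)$ (each gadget forces an additive $\Theta(\log k)$) while any $\theta_r$-minor must be confined to a bounded number of gadgets, hence at most $O(k)$ of them can be realized disjointly only if $k$ of them contain $\theta_r$; one then tunes the gadget so it contains \emph{no} $\theta_r$ at all, forcing models to straddle many gadgets along a bounded-degree backbone and capping their number. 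Thus the plan is: (1) build, for each $i$, a backbone expander or sparse graph on $k_i$ ``super-nodes'' and replace each super-node by a $\theta_r$-free gadget of size and treewidth $\Theta(\log k_i)$, chosen so that the whole graph has treewidth $\Theta(k_i\log k_i)$; (2) argue via \cref{c:last}-type edge-cut counting that any $\theta_r$-model must use a constant number of super-nodes, hence $k_i\cdot\theta_r$ as a minor would force $\Omega(k_i)$ super-nodes to carry a $\theta_r$, contradicting gadget-$\theta_r$-freeness combined with a bound on how models can share super-nodes; (3) conclude $G_i$ has no $k_i\cdot\theta_r$-minor while $\tw(G_i)=\Omega(k_i\log k_i)$. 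The hard part is item (2): making precise the ``locality'' of $\theta_r$-models so that $k_i$ disjoint ones genuinely cannot coexist, which is exactly where I would invoke \cref{prop2} and the function $g(2r-2)$ to certify that small edge-cuts isolate small pieces.
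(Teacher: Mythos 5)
There is a genuine gap: you never arrive at a working construction, and the missing idea is already hiding in your first paragraph. Bounded-degree expanders are indeed the right family, but the counting you apply to them is too weak: you charge each $\theta_r$-model only $r$ edges, which yields $k=O(n)$ and hence only $\tw=\Theta(k)$. The paper instead counts \emph{vertices per cycle}: since $r\ge 2$, every $\theta_r$-model contains a cycle, so in a graph of girth $g$ any $k$ vertex-disjoint $\theta_r$-models occupy at least $k\cdot g$ vertices. Taking the $3$-regular Ramanujan graphs of Morgenstern, which have girth at least $\frac{2}{3}\log |V(G_i)|$ and treewidth $\Omega(|V(G_i)|)$ (via spectral lower bounds on bisection width), and choosing $k_i$ minimal with $|V(G_i)|<k_i\cdot\frac{2}{3}\log |V(G_i)|$, one gets $|V(G_i)|=\Omega(k_i\log k_i)$, hence $\tw(G_i)=\Omega(k_i\log k_i)$, while $G_i$ cannot contain $k_i$ vertex-disjoint cycles and therefore has no $k_i\cdot\theta_r$-minor for any $r\ge 2$. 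The logarithmic factor thus comes from the \emph{girth} of the expander, not from any treewidth-inefficient gadget.

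Your subsequent detours do not repair this. The branchwidth-obstruction route gives treewidth only $\Theta(\log k_i)$, as you yourself note; the gadget/blow-up plan leaves its decisive step (2) --- the claimed locality of $\theta_r$-models across small edge-cuts --- entirely unproved, and invoking \cref{prop2} there is misplaced, since that proposition concerns minor obstructions for matroid branchwidth and says nothing about an arbitrary gadget graph you might build. None of this machinery is needed once one observes that a $\theta_r$-model contains a cycle and hence, in a high-girth graph, is automatically large. As written, the proposal identifies the obstacle but does not overcome it, so it does not constitute a proof of the lemma.
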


\begin{proof}
According to \cite[Theorem 5.13]{Morgenstern1994exis}, there is an
infinite family $\{G_i\}_{i\in \N}$ of 3-regular Ramanujan graphs
$G_i$ such that $i \mapsto |G_i|$ is an increasing function.
Furthermore, for every $i\in \N$, the graph $G_i$ has girth at least $\frac{2}{3} \log |V(G_i)|$ (\cite[Theorem 5.13]{Morgenstern1994exis})  and satisfies~$\tw(G_i)=\Omega(|V(G_i)|)$
(see~\cite[Corollary~1]{Bezrukov2004155}).
For every $i \in \N$, let $k_i$ be the minimum integer such that
$|V(G_i)| < k_i\cdot \frac{2}{3}\log |V(G_i)|$. Observe that
$(k_i)_{i\in \N}$ is increasing.
Notice  
that $|V(G_i)|=\Omega(k_i \cdot \log k_i )$, and thus $\tw(G_i)=\Omega(k_i\cdot  \log k_i)$.
We will show that $G_i$ does not contain $k_i$ vertex-disjoint cycles,
which implies that $k_i\cdot \theta_{r}$ is not a minor of $G_i$, for
every $r\in \N_{\geq 2}$.
Suppose for contradiction that $G_i$ contains 
$k_i$ vertex-disjoint cycles.
As the girth of $G_i$ is at least $\frac{2}{3} \log |V(G_i)|$,
each of these cycles has at least  $\frac{2}{3}\log |V(G_i)|$ vertices.
Therefore $G$ should contain at least 
 $k\cdot \frac{2}{3} \log |V(G_i)|$ vertices. This implies that
 $|V(G)|\geq k\cdot \frac{2}{3} \log |V(G_i)| > |V(G_i)|$, a
 contradiction.
Therefore $(k_i)_{i\in \N}$ and $(G_i)_{i\in \N}$ satisfy the required properties.
\end{proof}

\section{Concluding remarks}

In this paper, we introduced the concept of $H$-girth and proved that
for every $r \in \N_{\geq 2}$, a 
{\sl large} $\theta_r$-girth forces an exponentially large clique
minor. This extends the results of Kühn and Osthus related to the
usual notion of girth. We also gave a variant of our result where the
minimum degree is replaced by a connectivity measure.
As an application of our result, we optimally improved (up to a
constant factor) the upper-bound on the treewidth of graphs excluding
$k\cdot \theta_r$ as a minor.
A first question is whether our lower-bound on the clique
minor size can be improved.

Let us now state more general questions
spawned by this work.
A natural line of research is to investigate the $H$-girth
parameter for different instanciations of~$H$. An interesting problem in this
direction could be to characterize the graphs $H$ for which our results (\cref{t:klikforce} and \cref{t:klikforce2}) can be extended.

From its definition, the $H$-girth is related to the minor relation.
An other direction of research would be to extend the parameter of
$H$-girth to other containment relations.
One could consider, for a fixed graph $H$, the minimum
size of an induced subgraph that can be contracted to $H$, or the
minimum size of a subdivision of $H$ in a graph. The first one of
these parameters is related to induced minors and the
second one to topological minors.

As the usual notion of girth appears in various contexts in
graph theory, we wonder for which graphs $H$ the results related to
girth can be extended to the $H$-girth or to the two aforementioned variants.

\bibliographystyle{abbrv}
%\bibliography{pumpkil}

\end{document}